\documentclass[12pt]{amsart}

\usepackage{amsmath}
\usepackage{amssymb}
\usepackage{amstext}
\usepackage{amscd}
\usepackage[frenchb,english]{babel}
\usepackage{enumitem}
\usepackage{greekctr}
\usepackage[matrix,arrow,ps]{xy}
\usepackage[bookmarks=true,colorlinks=true,citecolor=blue,urlcolor=blue,linkcolor=magenta]{hyperref}

\newtheorem{theorem}{Theorem}[section]
\newtheorem{proposition}[theorem]{Proposition}
\newtheorem{lemma}[theorem]{Lemma}
\newtheorem{corollary}[theorem]{Corollary}
\newtheorem{conjecture}[theorem]{Conjecture}
\newtheorem{definition}[theorem]{Definition}

\newcommand{\Hom}{\mbox{Hom}}

\newcommand{\End}{{\rm End}}

\newcommand{\GL}{{\rm GL}}

\newcommand{\Aut}{{\rm Aut}}
\newcommand{\Supp}{{\rm Supp}}

\newcommand{\Gal}{{\rm Gal}}
\newcommand{\tors}{{\text{tors}}}

\newcommand{\F}{{\mathbb F}}

\newcommand{\R}{{\mathbb R}}

\newcommand{\Z}{{\mathbb Z}}

\newcommand{\Q}{{\mathbb Q}}
\newcommand{\C}{{\mathbb C}}

\newcommand{\A}{{\mathbb A}}

\renewcommand{\Gal}{{\operatorname{Gal}}}

\newcommand{\eps}{{\varepsilon}}

\newcommand{\ol}{\overline}
\newcommand{\wh}{\widehat}

\newcommand{\wt}[1]{{\widetilde{#1}}}

\DeclareMathOperator{\coker}{\operatorname{coker}}

\newcommand{\tens}{\otimes}
\usepackage{mathtools}

\DeclarePairedDelimiterX{\Nm}[1]{\left\lVert}{\right\rVert}{#1}
\DeclarePairedDelimiterX{\norm}[1]{\lVert}{\rVert}{#1}
\newcommand{\sous}{\backslash}

\newcommand{\abs}[1]{\left|#1\right|}

\title[Hybrid conjecture in a mixed Shimura variety]{Hybrid conjecture in a mixed Shimura variety}
\author{Rodolphe Richard}
\email{Rodolphe.Richard@normalesup.org}
\address{The University of Manchester,
Alan Turing Building,
Oxford Rd,
Manchester
M13 9PL,
UK}
\author{Andrei Yafaev}
\email{a.yafaev@ucl.ac.uk}
\address{UCL, Department of Mathematics, Gower Street, WC1E 6BT, London, UK}
\date{\today}
\begin{document}
\setcounter{tocdepth}{1}
\setcounter{secnumdepth}{4}
\maketitle

\begin{abstract}The authors previously formulated the hybrid conjecture, unifying  André-Pink-Zannier and André-Oort conjectures, and proved it in Shimura varieties of abelian type. We study its analogue for mixed Shimura varieties, and consider the prime example, the universal abelian scheme~$\mathcal{A}_g\to \mathbb{A}_g$.

In a radical departure from the Pila-Zannier strategy, typically applied to such questions, we employ instead a combination of equidistribution and o-minimality

Our main result strictly includes the following:  the Hybrid Conjecture, in particular the André-Pink-Zannier and André-Oort conjectures, for~$\mathbb{A}_g$; the mixed André-Oort conjecture for~$\mathcal{A}_g$; and Manin-Mumford conjecture for arbitrary abelian varieties. It also yields an analogue of the ``Manin-Mumford in arithmetic pencil", a result of \cite{BRU}, for abelian schemes over a variety.

The mixed hybrid conjecture in~$\mathcal{A}_g$ also encompasses the Mordell-Lang conjecture. We actually reduce the mixed hybrid conjecture for~$\mathbb{A}_g$ to its "mordellic" part. 

We also prove, Galois-theoretic results: uniform variants on the Ribet's Kummer theory of Abelian varieties, and Serre's theorem on Lang's conjecture.
\end{abstract}

\tableofcontents

\section{Introduction}

Let~$g\in\Z_{\geq0}$, let~$\mathbf{A}_g$ be the moduli space of principally polarised abelian varieties of dimension~$g$ over~$\C$.
We have~$\mathbf{A}_g(\C)\simeq Sp(2g,\Z)\backslash \mathfrak{H}_g$ where~$\mathfrak{H}_g$ is
 the Siegel upper half-space. Let~$\Gamma:=\ker (Sp(2g,\Z)\to Sp(2g,\Z/(3)))$ or any other torsion free congruence subgroup. Then $\mathbf{A}_\Gamma:=\Gamma\backslash \mathfrak{H}_g$ is a \emph{fine}
moduli space of abelian varieties equipped with some appropriate level structure of type~$\Gamma$. There exists a corresponding universal abelian 
scheme~$\mathcal{A}_\Gamma\to  \mathbf{A}_\Gamma$. 

Then~$\mathbf{A}_g$ is the archetype of a Shimura variety and~$\mathcal{A}_\Gamma$ is the archetype of a mixed Shimura variety.

In~\cite{RY:LMS1}, the authors introduced the hybrid conjecture, which encompass both the André-Oort conjecture and the André-Pink-Zannier conjecture, using the notion of \emph{hybrid orbit} in a Shimura variety, and, in~\cite{RY:LMS2} they proved it for Shimura varieties of abelian type.
Here, we define the analogue in the mixed Shimura variety~$\mathcal{A}_\Gamma$, which we call the \emph{mixed hybrid orbit}. 

This notion unifies the hybrid conjecture for~$\mathbf{A}_g$ -- which includes the André-Oort conjecture (AO) and the generalised André-Pink-Zannier conjecture (APZ) --  with the Mordell-Lang conjecture (ML) -- which includes the Mordell conjecture (MC) and the Manin-Mumford conjecture (MM) -- for an arbitrary abelian variety. This also generalises the mixed André-Oort conjecture (MxAO) for~$\mathcal{A}_\Gamma$.

Our main results are Theorems~\ref{thm:Main} and~\ref{thm:MMonHyb}. Theorem~\ref{thm:MMonHyb} implies all the former results:~(HC), (AO), (APZ), (MM), (MxAO), except the mordellic part of the Mordell-Lang conjecture:  (MC) and thus (ML). Theorem~\ref{thm:MMonHyb} is a particular case of Theorem~\ref{thm:Main}, which reduces the ``mixed hybrid conjecture" for~$\mathcal{A}_\Gamma$ to its "mordellic part".

This is a combination of two results, interesting on their own. 

A first result, Theorem~\ref{thm:weak} uses a combination of equidistribution and o-minimality, introduced by the first named author and E. Ullmo in their ergodic-theoretic proof of the geometric part of the André-Oort conjecture. Theorem~\ref{thm:weak} implies the analogue, for abelian scheme over a variety of characteristic zero, of a result of~\cite{BRU}, ``Manin-Mumford conjecture in artihmetic pencils'', which concerns abelian schemes over rings of algebraic integers.

Secondly are results on Galois representations attached to abelian varieties. Theorem~\ref{thm:Serre hybrid} is a reinforcement of Serre's Theorem~\ref{thm:Serre eAK} on a conjecture of Lang on homotheties of the Tate modules. Our version is uniform on a hybrid orbit of an abelian variety, which generalises the isogeny class by allowing CM factors and passing to quotient and to powers (cf. Lemma~\ref{lem:truc}). Our next result is Theorem~\ref{thm:uniform Kummer1}, which is uniform version, over a hybrid orbit of an abelian variety, of Ribet's Kummer theory~Th.~\ref{thm:Kummer} for Abelian varieties. We actually prove stronger results. Theorem~\ref{thm:Serre unif principal} goes beyond Lang's conjecture, considering any element in the centre of the Mumford-Tate group, not only homotheties. Theorem~\ref{thm:uniform Kummer1} proves uniformity for arbitrary abelian extension, not only those coming by CM theory. We also give a new formulation, Theorem~\ref{thm:uniform Faltings}, of Tate uniform integral conjecture, derived from Faltings' theorem on Tate conjectures, of independent interest. 
These Galois theoretic results are not only over number fields, but also for finitely generated extension of~$\Q$. 

Because our approach is based on equidistribution, it may lead to refinements, though we did not explore that question. For instance, for cases in which one can prove topologic refinements of the Hybrid conjecture in the Simura variety~$\mathbf{A}_g$ using equidistribution (e.g. \cite{Duke,Ilya,RY:SHecke}), does our approach deduce to corresponding refinements in the mixed Shimura variety~$\mathcal{A}_g$?

\subsection{Main statements}
When~$A$ is a principally polarised abelian variety over~$\C$, and~$Q\in A(\C)$, and~$\lambda$ is a $\Gamma$-level structure\footnote{There is~$N\in\Z_{\geq1}$ such that~$\Gamma\geq \Gamma(N):=\ker (Sp(2g,\Z)\to Sp(2g,\Z/(N))$, and a~$\Gamma$-level structure on~$A$ amounts to an right coset~$\lambda\in \left(Isom(\Z/(N))^{2g},A[N])\right)/\Gamma$.} on~$A$, we denote by~$A\in \mathbf{A}_g(\C)$, by~$(A,\lambda)\in \mathbf{A}_\Gamma(\C)$ and~$(A,Q,\lambda)\in \mathcal{A}_\Gamma(\C)$ the corresponding moduli point.


In the following, we consider that abelian varieties of dimension zero have CM.
\begin{definition}[Hybrid, mixed hybrid and mordellic orbits]
\label{def:mixed hybrid orbits}
Let~$A$ be an abelian variety over~$\C$ and let~$P\in A(\C)$. 
We define
\begin{subequations}
\begin{multline}
\Sigma_\Gamma(A):=\{(B,\lambda)\in \mathbf{A}_\Gamma|\exists n\in \Z_{\geq1},
\exists \phi:A^n\to B,\\ B/\phi(A^n)\text{ has CM}\}.
\end{multline}
\begin{multline}\label{eq:def Hgamma}
H_\Gamma(A,P):=\{(B,Q,\lambda)\in \mathcal{A}_\Gamma|\exists d,n\in \Z_{\geq1}, \exists \phi:A^n\to B, 
\\B/\phi(A^n)\text{ has CM and }d\cdot Q=\phi(P,\ldots,P)\}.
\end{multline}
\begin{multline}
M_\Gamma(A,P):=\{(B,Q,\lambda)\in \mathcal{A}_\Gamma|\exists n\in \Z_{\geq1},\exists \phi:A^n\to B,\\ B/\phi(A^n)\text{ has CM and }Q=\phi(P,\ldots,P)\}.
\end{multline}
\end{subequations}
We say that~$H_\Gamma(A,P)\subseteq \mathcal{A}_g$ is the \emph{mixed hybrid orbit of~$(A,P)$
in~$\mathcal{A}_\Gamma$}
and that~$M_\Gamma(A,P)\subseteq H_\Gamma(A,P)\subseteq \mathcal{A}_\Gamma$ is the \emph{mordellic orbit of~$(A,P)$ in~$\mathcal{A}_\Gamma$}.

\end{definition}


The relation with the Hybrid orbit defined in~\cite{RY:LMS1,RY:LMS2} is the following.
\begin{lemma}\label{lem:truc}
Let~$(GSp(2g),\pm\mathfrak{H}_g)$ denote Siegel's Shimura datum. Let~$M_A$ be the Mumford-Tate group of a principally polarised~$A\in \mathbf{A}_g(\C)$ and let~$x_A:\C^\times\to M_A(\R)$
be its Hodge cocharater, and let~$X_A=M_A(\R)\cdot x_A$.
Let~$\Sigma_{\pm\mathfrak{H}_g}(M_A,X_A,x_A)$ be as in~\cite[Def. 6]{RY:LMS1}. Then
\[
\Sigma_g(A)\subseteq GSp(2g,\Z)\backslash\Sigma_{\pm\mathfrak{H}_g}(M_A,X_A,x_A).
\]
\end{lemma}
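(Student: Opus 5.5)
The plan is to unwind the definition of~$\Sigma_{\pm\mathfrak{H}_g}(M_A,X_A,x_A)$ from~\cite[Def.~6]{RY:LMS1}. I will use it in the following form: its points are those~$y\in\pm\mathfrak{H}_g$ whose Hodge cocharacter is of the form~$\rho\circ(x_A,t)$. Here~$(T,\{t\})$ is a Shimura datum of CM type, i.e.~$T$ is a torus and~$t$ a special point, and~$\rho:M_A\times T\to GSp(2g)$ is a morphism of Shimura data, allowed to be composed with~$GSp(2g,\Q)$-conjugation. I will also use that this set is stable under~$GSp(2g,\Q)$, which is how the Hecke-type part of the hybrid orbit is encoded. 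The proof then produces, for each~$(B,\lambda)\in\Sigma_\Gamma(A)$, such a pair~$(T,t)$ and such a~$\rho$, and keeps track of the integral lattice at the end.

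\textbf{Step 1 (decomposition of~$B$).} Let~$\phi:A^n\to B$ be such that~$C':=B/\phi(A^n)$ has CM. By Poincaré complete reducibility,~$B$ is isogenous to~$B_1\times C$, where~$B_1=\phi(A^n)$ and~$C$ is isogenous to~$C'$, hence has CM. Passing to rational homology gives an isomorphism of rational Hodge structures
\[
H_1(B,\Q)\simeq H_1(B_1,\Q)\oplus H_1(C,\Q).
\]
The first summand is a quotient of~$H_1(A,\Q)^n$. Hence~$M_A$ acts on it through a representation~$\rho_1$ obtained from the diagonal action on~$H_1(A,\Q)^n$, with Hodge cocharacter~$\rho_1\circ x_A$. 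Let~$T$ be the Mumford--Tate group of~$C$ and~$t=x_C$; this is a torus since~$C$ has CM, and~$H_1(C,\Q)$ carries a representation~$\rho_2$ of~$T$. Then~$\rho=\rho_1\oplus\rho_2:M_A\times T\to GL(H_1(B,\Q))$ satisfies~$x_B=\rho\circ(x_A,t)$.

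\textbf{Step 2 (polarisation).} I must show that~$\rho$ lands in~$GSp(H_1(B,\Q),\psi_B)$, where~$\psi_B$ is the principal polarisation of~$B$. The Riemann form~$\psi_B$ is a Hodge class in~$\Lambda^2H^1(B,\Q)\otimes\Q(1)$, so~$MT(B)$ lies in~$GSp(\psi_B)$. Moreover~$MT(B)$ is the image of the Mumford--Tate group of~$(A^n\times C)$ under~$\rho$, and that group is a subgroup of~$M_A\times T$ which surjects onto each factor. Replacing~$M_A\times T$ by this subgroup, as the definition of the hybrid orbit permits, and~$\rho$ by its restriction, we obtain a morphism into~$GSp(\psi_B)$. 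It sends~$(x_A,t)$ to~$x_B$, so it is a morphism of Shimura data into~$(GSp(\psi_B),\pm\mathfrak{H})$.

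\textbf{Step 3 (lattices).} Choose a symplectic similitude~$H_1(B,\Q)\simeq\Q^{2g}$ carrying~$H_1(B,\Z)$ to~$\Z^{2g}$ and~$\psi_B$ to the standard form; this is possible because the polarisation is principal. Under this identification~$x_B$ becomes a point of~$\pm\mathfrak{H}_g$ of the form above, hence lies in~$\Sigma_{\pm\mathfrak{H}_g}(M_A,X_A,x_A)$. Another choice of identification changes the point by an element of~$GSp(2g,\Z)$, which gives the inclusion after quotienting by~$GSp(2g,\Z)$ and forgetting~$\lambda$.

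\textbf{Main obstacle.} I expect the main difficulty to lie in Step~2 and in matching the exact formulation of~\cite[Def.~6]{RY:LMS1}. That definition may require a genuine product~$M_A\times T$ rather than a subgroup of it surjecting onto both factors, and the Hodge data must be compatible, which is where the Shimura-datum axioms enter. If a genuine product is needed, I would enlarge~$T$ to the full CM torus and check that~$(\rho_1,\rho_2)$ is still a similitude for~$\psi_B$. This reduces to knowing that the similitude characters of~$\rho_1$ and~$\rho_2$ are both given by the weight cocharacter.
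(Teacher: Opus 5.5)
There is a genuine gap in Step~2, and it comes from the form of the definition you commit to. The criterion the paper's proof actually checks for membership in $\Sigma_{\pm\mathfrak{H}_g}(M_A,X_A,x_A)$ is adjoint-level: the existence of a morphism $\phi:M_A^{ad}\to M_B^{ad}$ with $x_B^{ad}=\phi\circ x_A^{ad}$. In that formulation CM factors are invisible by design. You instead aim for $x_B=\rho\circ(x_A,t)$ with $\rho$ defined on $M_A\times T$. You then replace $M_A\times T$ by $M':=MT(A\times C)$, which your own formulation does not permit.

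Your fallback for the genuine-product reading also fails. Take $C\leq B$ to be the complement of $B_1$ with respect to the polarisation, so that $\psi_B=\psi_1\perp\psi_2$. Then $(\rho_1,\rho_2)(m,t)$ scales the two blocks by $\nu(\rho_1(m))$ and $\nu(\rho_2(t))$ independently. For example, a homothety $(\lambda,1)$ with $\lambda\in\mathbb{G}_m\leq M_A$ scales $\psi_1$ by $\lambda^2$ and $\psi_2$ by $1$. Hence $(\rho_1,\rho_2)(M_A\times T)\not\subseteq GSp(\psi_B)$ whenever $B_1\neq0\neq C$, whatever the similitude characters are. Only the fibre product $M_A\times_{\mathbb{G}_m}T$ maps into $GSp(\psi_B)$.

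The missing step is short and uses what you already have. The projection $M'\to M_A$ is surjective, with kernel contained in $1\times T$. That kernel is central in $M'$ because $T$ is commutative, so $(M')^{ad}=M_A^{ad}$, with $x_{A\times C}^{ad}$ corresponding to $x_A^{ad}$. The surjection $\rho|_{M'}:M'\to MT(B)=M_B$ then induces $\phi:M_A^{ad}\to M_B^{ad}$ with $x_B^{ad}=\phi\circ x_A^{ad}$, which is exactly what is needed. The auxiliary torus, the morphism of Shimura data into $GSp(\psi_B)$ and the similitude bookkeeping all become unnecessary. Step~3 then only serves to pass to $GSp(2g,\Z)$-orbits.

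Completed this way, your route differs from the paper's. The paper splits $A$ and $B$ into simple isotypic factors and uses $\Q$-Zariski density of $(x_1,\dots,x_d)$ in $\prod_i MT(A_i)^{ad}$. This gives an ``if and only if'': every non-CM simple factor of $B$ is isogenous to some $A_i$. Your argument gives only the ``if'' direction, which is all the inclusion requires, and it does not need that density statement.
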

\begin{proof}Let~$A$ be an abelian variety over~$\C$. Let~$A\to \bigoplus_{i=1}^d A_i^{n_i}$ be an isogeny where the~$A_i$ are simple and pairwise non-isogeneous and~$n_i\geq 1$. Let~$x_i:=x_{A_i}^{ad}:\C^\times \to MT(A_i)\to M_i:=MT(A_i)^{ad}$ be the morphism induced by the Hodge cocharacter for~$A_i$. We recall that the image of
\[
(x_1,\ldots,x_d):\C^\times\to M_1\times\ldots\times M_d
\]
is~$\Q$-Zariski dense. Let~$\xi=\bigoplus x_i^{\oplus n_i}:\C^\times \to M_1^{n_1}\times\ldots\times M_d^{n_d}$.
Let~$M'_i\leq M_i^{n_i}$ be the image of~$g\mapsto (g,\ldots,g):M_i\to M_i^{n_i}$. Then the image of~$\xi$ is $\Q$-Zariski dense in
\[
M':=M_1'\times\ldots\times M_d'\leq M_1^{n_1}\times\ldots\times M_d^{n_d}.
\]
We claim that there is a natural isomorphism~$\iota:M^{ad}_A\simeq M'$ such that~$\xi=\iota\circ x^{ad}_A$ where~$x^{ad}_A:\C^\times\to M_A\to M_A^{ad}$ is induced by the Hodge cocharacter of~$A$. Using the isomorphisms~$M_i\to M'_i$ one can identify~$x_A^{ad}$ with~$(x_1,\ldots,x_d):\C^\times\to M_1\times\ldots \times M_d$.

 Consider an abelian variety~$B$ over~$\C$ and let similarly~$B\to \bigoplus_{j=1}^e B_j^{m_j}$ be an isogeny, such that  the~$B_j$ simple and pairwise non-isogeneous.

From the above discussion one sees that there exists
\[
\phi:M^{ad}_A\to M^{ad}_B
\]
 such that~$x_B^{ad}=\phi\circ x_A^{ad}$ if and only: for every~$1\leq j\leq e$ such that $M^{ad}_{B_j}$ is not trivial, there is~$1\leq i\leq d$ such that~$A_i$ is isogeneous to~$B_j$. This is readily equivalent to the property
\[
\exists n\in \Z_{\geq1},
\exists \phi:A^n\to B,\\ B/\phi(A^n)\text{ has CM}.
\]
The Lemma follows.
\end{proof}
The proof of Lemma~\ref{lem:truc} gives the following interpretation of hybrid orbits for abelian varieties.
The abelian variety~$B$ is in te hybrid orbit of~$A$ if and only if every simple quotient of~$B$ is CM or is also a quotient of~$A$.

The main result of~\cite{RY:LMS2} implies the following. We refer to~\cite{RY:LMS2} for the definition of a weakly special subvariety in a pure Shimura variety.
\begin{theorem}[Hybrid conjecture for~$\A_\Gamma$ \cite{RY:LMS2}]\label{thm:IHES}
Let~$\Sigma\subseteq \Sigma_g(A)$ be a subset. Then the irreducible components of~$\ol{\Sigma}^{Zar}$ are weakly special subvarieties.
\end{theorem}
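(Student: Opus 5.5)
This theorem is a direct translation of the main theorem of~\cite{RY:LMS2}, so the plan is to transport the statement from~$\mathbf{A}_\Gamma$ to the Shimura variety attached to Siegel's datum~$(GSp(2g),\pm\mathfrak{H}_g)$ and invoke the abelian-type hybrid conjecture there.

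\textbf{Step 1: pass to the Siegel Shimura variety.} Write~$Sh_K(GSp(2g),\pm\mathfrak{H}_g)$ for a level~$K$ compatible with~$\Gamma$. Then~$\mathbf{A}_\Gamma$ is a connected component of (a finite cover of)~$Sh_K(GSp(2g),\pm\mathfrak{H}_g)$. Let~$\pi$ denote the finite map from~$\mathbf{A}_\Gamma$ to the image of~$GSp(2g,\Z)\backslash(\pm\mathfrak{H}_g)$. Weakly special subvarieties are preserved under images and under irreducible components of preimages by such finite maps between Shimura varieties, which is standard and recalled in~\cite{RY:LMS2}. It therefore suffices to show that~$\pi(\Sigma)$ has Zariski closure with weakly special components. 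Indeed, the components of~$\ol{\Sigma}^{Zar}$ are components of~$\pi^{-1}(\pi(\ol{\Sigma}^{Zar}))$, and these are weakly special because~$\pi$ is a finite morphism of Shimura varieties induced by a change of level.

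\textbf{Step 2: identify the set with a hybrid orbit.} By Lemma~\ref{lem:truc}, the image of~$\Sigma\subseteq\Sigma_g(A)$ lies in the image of~$\Sigma_{\pm\mathfrak{H}_g}(M_A,X_A,x_A)$, the hybrid orbit of~\cite[Def.~6]{RY:LMS1}. The Shimura datum~$(GSp(2g),\pm\mathfrak{H}_g)$ is of Hodge type, hence of abelian type, so the main theorem of~\cite{RY:LMS2} applies. It says that any subset of this hybrid orbit has a Zariski closure whose irreducible components are weakly special. Applying it to the image of~$\Sigma$ and pulling back by Step~1 gives the theorem.

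\textbf{Main obstacle.} The only delicate point is bookkeeping in the comparison of levels and components. The hybrid orbit in~\cite{RY:LMS1} is defined via Hecke translates by~$GSp(2g,\mathbb{A}_f)$ at some level, while here we work with~$\Gamma$ and principally polarised varieties. I would handle this by enlarging to the full adelic Shimura variety at level~$K$ and noting two facts. First, the hybrid-orbit property is insensitive to the level structure~$\lambda$ by Definition~\ref{def:mixed hybrid orbits}. Second, the conclusion of~\cite{RY:LMS2} is stated for arbitrary subsets of hybrid orbits at any neat level.
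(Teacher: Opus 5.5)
Your route is the paper's: the paper gives no separate argument and simply combines Lemma~\ref{lem:truc} with the main theorem of \cite{RY:LMS2} for the abelian-type datum $(GSp(2g),\pm\mathfrak{H}_g)$. However, the justification of your level-change reduction in Step~1 is wrong as written.

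It is not true that every irreducible component of $\ol{\Sigma}^{Zar}$ is a component of $\pi^{-1}(\pi(\ol{\Sigma}^{Zar}))$. Two components $Z_1\neq Z_2$ can satisfy $\pi(Z_1)\subsetneq\pi(Z_2)$ without $Z_1\subseteq Z_2$. For instance, take $Z_2$ a special curve and $Z_1$ a CM point lying on a translate $\gamma Z_2\neq Z_2$ with $\gamma\in Sp(2g,\Z)$; CM points lie in every hybrid orbit. Then $\pi(Z_1)$ is not a component of $\ol{\pi(\Sigma)}^{Zar}$. Nor is $Z_1$ a component of the preimage, since $Z_1\subseteq\gamma Z_2\subseteq\pi^{-1}(\pi(\ol{\Sigma}^{Zar}))$ with $\dim Z_1<\dim Z_2$. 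So knowing the components downstairs does not, by your argument, tell you anything about $Z_1$.

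The repair is short: work one component at a time. Let $Z$ be a component of $\ol{\Sigma}^{Zar}$. Then $\Sigma\cap Z$ is Zariski dense in $Z$: write $\ol{\Sigma}^{Zar}=\bigcup_j\ol{\Sigma\cap Z_j}^{Zar}$ and use that components are not contained in one another. Hence $\pi(Z)=\ol{\pi(\Sigma\cap Z)}^{Zar}$ is irreducible, and it is weakly special by the downstairs result applied to $\pi(\Sigma\cap Z)$. Since $\pi$ is finite, every component of $\pi^{-1}(\pi(Z))$ has dimension at most $\dim\pi(Z)=\dim Z$. So $Z$ is one of these components, and it is weakly special by the preimage property you already invoke.

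Alternatively, skip the level change altogether. The criterion in the proof of Lemma~\ref{lem:truc} involves only the abelian variety $B$, not the level structure $\lambda$. So $\Sigma_\Gamma(A)$ already lies in the image of the hybrid orbit $\Sigma_{\pm\mathfrak{H}_g}(M_A,X_A,x_A)$ in $\mathbf{A}_\Gamma$, and \cite{RY:LMS2} applies directly at the neat level $\Gamma$. This also removes an inconsistency in your write-up: you descend to the non-neat level $GSp(2g,\Z)$, but then invoke \cite{RY:LMS2} ``at any neat level''.
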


Theorem~\ref{thm:IHES} establishes part~\eqref{conj:0} of the following Conjecture.
\begin{conjecture}[Mixed hybrid conjecture for universal abelian schemes]
\label{conj}
Let~$A$ be an abelian variety over~$\C$, and~$P\in A(\C)$. Let~$H_{\Gamma}(A,P)$ be as in~\eqref{eq:def Hgamma} and let~$\Sigma\subseteq H_{\Gamma}(A,P)$ be a subset. Let~$V\subseteq \ol{\Sigma}^{Zar}$ be an irreducible component and let~$S\subseteq \A_\Gamma$ be the image of~$V$.
\begin{enumerate}\setcounter{enumi}{0}
\item \label{conj:0} Then~$S\subseteq \A_\Gamma$ is a weakly special subvariety.
\item \label{conj:1}
Assume that for some~$s\in S$ we have~$V\cap (A_s)_{tors}\neq \emptyset$. Then there exists~$d\in \Z_{\geq1}$ and an abelian subscheme~$\mathcal{B}\leq \mathcal{A}_{\Gamma}|_S$ such that~$d\cdot V=\mathcal{B}$. Equivalently,~$V$ is an irreducible component of~$\mathcal{B}+\mathcal{A}_{\Gamma}|_S[d]$.
\item \label{conj:2}
In general, let~$\eta\in S$ be the generic point of~$S$ and~$V_\eta\subseteq \mathcal{A}_\eta$ be the fibres of~$V\subseteq \mathcal{A}_\Gamma$ over~$\eta$.  
Then there exists~$d\in \Z_{\geq1}$ and~$\phi:A\to \mathcal{A}_\eta$ and an abelian subvariety~$B\leq A_\eta$ such that~$d\cdot V_\eta=\phi(P)+B\subseteq \mathcal{A}_\eta$.
\end{enumerate}
\end{conjecture}
\subsubsection{}\label{sec:mordell lang case}

If~$S$ is a singleton~$\{s\}$, and~$A=\mathcal{A}^n_s$ and~$P=(P_1,\ldots,P_n)$, then Conjecture~\ref{conj} implies the \emph{Mordell-Lang conjecture} for the finitely generated subgroup
\[\{Q\in \mathcal{A}_s|\exists m\in\Z_{\geq1},\phi_1,\ldots,\phi_n\in\End(\mathcal{A}_s), mQ=\phi_1(P_1)+\ldots+\phi_n(P_n)\}\leq \mathcal{A}_s.\]
\subsubsection{}

Assume~$A=0$. Therefore~$P=0$ and~$\Sigma$ is the set of~$(B,Q,\lambda)$ in $\mathcal{A}_\Gamma$ such that~$B$ is CM and~$Q$ is a torsion point. That case of Conjecture~\ref{conj} is the \emph{mixed André-Oort conjecture} for~$\mathcal{A}_\Gamma$. It implies Manin-Mumford conjecture only for CM abelian varieties.
\subsubsection{}\label{sec:conj for P0}	
Assume~$P=0$ for an arbitrary~$A$. Then Conjecture~\ref{conj} implies (\cite{Pink})
\begin{itemize}
\item the \emph{Hybrid conjecture} in~$\mathbf{A}_g$ (\cite{RY:LMS2}), and in particular
\begin{itemize}
\item  the \emph{André-Oort Conjecture} for~$\mathbf{A}_g$ (\cite{PT,Tsimerman,AGHM,YZ}), and
\item  the \emph{generalised André-Pink-Zannier Conjecture} for~$\mathbf{A}_g$ (\cite{RY:IHES});
\end{itemize}
\item the \emph{Manin-Mumford conjecture} for arbitrary abelian varieties;
\item the \emph{mixed André-Oort Conjecture} in~$\mathcal{A}_g$ (cf.\,\cite{Gao}).
\end{itemize}

\subsubsection{Results}
Our main result Theorem~\ref{thm:Main} is the following.

\begin{theorem}\label{thm:Main}
Let~$A$,~$P$,~$\Sigma$, $V$ and $S$ be as in Conjecture~\ref{conj} and $M_\Gamma(A,P)$ be as in Definition~\ref{def:mixed hybrid orbits}.
%

Then there exists~$d\in \Z_{\geq1}$ and an abelian subscheme~$\mathcal{B}\leq \mathcal{A}_{\Gamma}|_{S}$ such that~$d\cdot V=W+\mathcal{B}$ with
\[
W=\ol{W\cap M_\Gamma(A,P)}^{Zar}.
\]
\end{theorem}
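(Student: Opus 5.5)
We argue by combining the two ingredients announced in the introduction: the equidistribution/o-minimality result (Theorem~\ref{thm:weak}) and the uniform Galois-theoretic results (Theorems~\ref{thm:Serre hybrid} and~\ref{thm:uniform Kummer1}). First reduce to a finitely generated field of definition: choose a finitely generated field $K\subseteq\C$ over which $A$, $P$ and $\mathcal{A}_\Gamma$ are defined. By Theorem~\ref{thm:IHES} the image $S$ is weakly special. After a finite base change and replacing $\Gamma$ by a smaller level, we may work with the restricted abelian scheme $\mathcal{A}_\Gamma|_S\to S$. The points $(B,Q,\lambda)\in\Sigma$ over a fixed $(B,\lambda)$ satisfy $dQ=\phi(P,\ldots,P)$, so $Q$ lies in the preimage under multiplication by $d$ of a point of the mordellic orbit $M_\Gamma(A,P)$. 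The strategy is to separate the "torsion/division" direction, governed by $d$, from the "mordellic" direction, governed by $\phi(P,\ldots,P)$.

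The first main step is a Galois-orbit lower bound. For $x=(B,Q,\lambda)\in\Sigma$ with $dQ=R\in M_\Gamma(A,P)$, we show that the Galois orbit of $Q$ over the field $K(B,\lambda,R)$ has size $\gg d'^{\,\delta}$. Here $d'$ is the exact order of $Q$ modulo the subgroup generated by the $\End(B)$-orbit of $R$, and the constants are uniform over the hybrid orbit. This is precisely where the uniform Kummer theory of Theorem~\ref{thm:uniform Kummer1} enters, together with the homothety result of Theorem~\ref{thm:Serre hybrid}, which is used to kill the cohomological obstruction. The uniformity over $\Sigma_\Gamma(A)$, rather than only over a single abelian variety, is what makes the bound usable along the whole family.

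The second main step uses these large orbits. We apply Theorem~\ref{thm:weak} fibrewise over $S$, in the style of Pila–Zannier/Ullmo equidistribution plus o-minimal definability. We consider the Galois conjugates in $V$ of division points of large order, together with translates by a fixed point of $M_\Gamma(A,P)$. Equidistribution of these orbits in the Betti fibres, combined with definability of $V$ in the Betti foliation, forces $V$ to be stable under a positive-dimensional abelian subscheme $\mathcal{B}$. Arguing by induction on the relative dimension of $V$ over $S$, we quotient by $\mathcal{B}$ and repeat. The induction stops when the remaining division orders $d'$ on a Zariski dense subset are bounded. At that point some fixed multiple $d\cdot V$ equals $W+\mathcal{B}$, where $W$ is the Zariski closure of points lying in $M_\Gamma(A,P)$, as required. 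One also checks that multiplying by $d$ maps $\Sigma\cap V$ into $M_\Gamma(A,P)$ up to $\mathcal{B}$, and this gives $W=\ol{W\cap M_\Gamma(A,P)}^{Zar}$.

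The main obstacle is the uniform Galois bound of the first step. It requires controlling simultaneously the variation of $B$ in the hybrid orbit, where CM factors and isogenies may be unbounded, and the variation of $\phi$. We would first try reducing to $B=A^n\times C$ with $C$ CM, up to an isogeny of controlled degree, using Faltings' uniform Tate statement (Theorem~\ref{thm:uniform Faltings}). We would then apply Kummer theory to the subgroup generated by $P$ in the pure-$A$ part, and separately handle the CM part through Theorem~\ref{thm:Serre unif principal}.
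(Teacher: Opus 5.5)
Your architecture (uniform Galois theory on the hybrid orbit feeding Theorem~\ref{thm:weak}) matches the paper's. However, the interface between your two steps does not work as written. Theorem~\ref{thm:weak} does not take as input a lower bound $\#\Gal\cdot Q\gg (d')^{\delta}$. Its hypothesis is that the fibre sets have the exact shape $E_n=G_n+L_e\cdot a_n+k_n$, because the argument runs through limits of the measures $\mu_{E(G,t,k)}$ (Theorems~\ref{thm:equi} and~\ref{thm:equi0}). A large but unstructured subset of a fibre gives no control on the limit measure and forces nothing on $V$.

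You have to keep the structural output of the Galois theorems instead of converting it into a count. Uniform Kummer theory gives $\kappa'\cdot\wh{T}(A_P)\leq m_P(\Gal(\ol{K}/K_3))$, hence $Q+\kappa\,\phi_Q(A_P[d_Q])\subseteq \Gal(\ol{K}/K)\cdot Q\cap A_{s_n}$ (Theorem~\ref{thm:Gal En}). Serre's homotheties give $\phi(P)+L_e\cdot T\subseteq\Gal(\ol{K}/K)\cdot(\phi(P)+T)$. So Serre's theorem is needed directly for the torsion direction (for $P=0$ it is the whole content), not only to kill the cohomological obstruction inside Kummer theory.

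Fed with these sets, Theorem~\ref{thm:weak} gives $\mathcal{C}=\mathcal{C}+\mathcal{B}$ together with $\#E_n/B_{s_n}=O(1)$ at once. So no induction on relative dimension is needed: the paper makes one pass for the division part and a second for the torsion part. Your ``one also checks'' also hides a real step. Bounded order modulo $\mathcal{B}$ only becomes information about $M_\Gamma(A,P)$ after the lifting of Theorem~\ref{thm:E1}. That lifting produces $Q'_n\in Q_n+B_{s_n}$ with $n_2d'Q'_n=\phi'_n(P)+T_n$, using a complementary abelian subscheme meeting $\mathcal{B}$ inside $\mathcal{A}[n_2]$. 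This works uniformly in $n$ because $\End(A_\eta)=\End(\mathcal{A})$.

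Second, your plan for the obstacle you single out fails. Members of a hybrid orbit are related to $A^n\times C$ only by isogenies of unbounded degree, defined over extensions of $K$ of unbounded degree (think of $A/G$ for finite subgroups $G$ of growing order). So neither Faltings' finiteness nor Theorem~\ref{thm:uniform Faltings} provides an isogeny of controlled degree, and uniformity over the fields $K(B,\lambda,R)$ cannot be reached this way.

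The paper controls fields of definition rather than degrees. It works over the infinite extension $K_3'$ of $K$ generated by two pieces: the fixed field $K_2'$ of the Galois elements acting on $A_{tors}$ by homotheties, and the subextension of $E_*^{ab}$ on which the reciprocity map lands in $I_\Q$ (here $E_*$ is the reflex field of the CM part, of bounded degree). Over $K_3'$ the relevant kernels are Galois stable. Hence $A''=((A/\ker\phi)\times C)/\ker\psi$ and the image of $s_n$ in $\mathbb{A}_g$ are defined over $K_3'$, and $s_n$ is defined over an extension of degree $\leq d_\Gamma$. By Jordan's theorem, the isomorphism $A''\simeq A_{s_n}$ is defined over a further extension of degree $\leq c(2g)$. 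This works because the Galois inputs survive over such infinite fields: Kummer theory holds over $E^{ab}(A_{tors})$ with $[E:K]$ bounded (Theorem~\ref{thm:uniform Kummer2}), and the homotheties lie in $\Gal(\ol{K}/K_2')$ by construction.
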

When~$P=0$, the set~$M_g(A,P)$ is contained in the neutral section~$\mathbf{A}_g\to \mathcal{A}_g$ and for every~$n$, the point~$Q_n$ has finite order in~$B_n$. 
Theorem~\ref{thm:Main} gives the following, which encompasses all the Conjectures mentionned in \S~\ref{sec:conj for P0}.
\begin{corollary}\label{thm:MMonHyb} Conjecture~\ref{conj} holds if~$P=0$. We are moreover in the case~\eqref{conj:1} of Conjecture~\ref{conj}.
\end{corollary}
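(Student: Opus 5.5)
The plan is to deduce the corollary directly from Theorem~\ref{thm:Main}, applied with $P=0$ to the given $A$, $\Sigma\subseteq H_\Gamma(A,0)$, $V$ and $S$. Part~\eqref{conj:0} is Theorem~\ref{thm:IHES} applied to the image of $\Sigma$ in $\A_\Gamma$. Indeed, that image lies in $\Sigma_\Gamma(A)$ by Definition~\ref{def:mixed hybrid orbits}, since $(B,Q,\lambda)\in H_\Gamma(A,0)$ forces $B/\phi(A^n)$ to have CM. The image $S$ of $V$ is an irreducible component of the Zariski closure of the image of $\Sigma$: the projection is proper on the relevant closures, and closure commutes with image up to components. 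So $S$ is weakly special.

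For parts~\eqref{conj:1} and~\eqref{conj:2}, Theorem~\ref{thm:Main} gives $d\geq1$, an abelian subscheme $\mathcal{B}\leq\mathcal{A}_\Gamma|_S$ and $W$ with $d\cdot V=W+\mathcal{B}$ and $W=\ol{W\cap M_\Gamma(A,0)}^{Zar}$. Since $P=0$, every $(B,Q,\lambda)\in M_\Gamma(A,0)$ has $Q=\phi(0,\ldots,0)=0$. Hence $M_\Gamma(A,0)$ is contained in the image of the zero section $e:\A_\Gamma\to\mathcal{A}_\Gamma$, which is closed, so $W\subseteq e(\A_\Gamma)$. Moreover $W\subseteq d\cdot V$ lies over $S$, so $W\subseteq e(S)$. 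Therefore
\[
d\cdot V=W+\mathcal{B}=\mathcal{B}|_{\pi(W)},
\]
where $\pi(W)$ is the image of $W$ in $S$.

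It remains to check that $\pi(W)=S$. The set $d\cdot V$ is irreducible and surjects onto $S$, because $V$ does and multiplication by $d$ is fibrewise. On the other hand $d\cdot V=\mathcal{B}|_{\pi(W)}$ surjects onto $\pi(W)$. Hence $\pi(W)=S$ and $d\cdot V=\mathcal{B}$. This is exactly the conclusion of~\eqref{conj:1}. It also gives~\eqref{conj:2} with $\phi=0$ and $B=\mathcal{B}_\eta$, so we are in case~\eqref{conj:1} unconditionally. One also sees directly that $V$ meets torsion: $V$ is a component of $[d]^{-1}(\mathcal{B})$, which contains $e(S)$, and $V$ surjects to $S$.

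The step needing most care is whether ``$d\cdot V$'' in Theorem~\ref{thm:Main} means the scheme-theoretic image, which is closed, and whether $W+\mathcal{B}$ is interpreted over the base $S$. I will read $W+\mathcal{B}$ as the image of $W\times_S\mathcal{B}$ under addition. With $W$ inside the zero section, this image is $\mathcal{B}$ restricted to $\pi(W)$, and by properness of $\mathcal{B}\to S$ it is closed, so no further closure issues arise. The equivalent phrasing, that $V$ is a component of $\mathcal{B}+\mathcal{A}_\Gamma|_S[d]$, then follows from $V\subseteq[d]^{-1}(\mathcal{B})=\mathcal{B}'+\mathcal{A}_\Gamma|_S[d']$ up to enlarging $d$. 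Here $\mathcal{B}'$ is the abelian subscheme $\mathcal{B}$ itself, since $[d]$ is surjective on $\mathcal{B}$, and dimensions match.
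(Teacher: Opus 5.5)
Your route is the paper's: the corollary is read off from Theorem~\ref{thm:Main} with $P=0$, using that $M_\Gamma(A,0)$ lies in the zero section. Hence $W\subseteq e(S)$ and $W+\mathcal{B}=\mathcal{B}|_{p(W)}$, and part~\eqref{conj:0} is supplied by Theorem~\ref{thm:IHES}. Your check that $p(W)=S$, and hence $d\cdot V=\mathcal{B}$, fills in what the paper leaves implicit.

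One step in part~\eqref{conj:0} is false as stated. Let $p:\mathcal{A}_\Gamma\to\A_\Gamma$ be the projection; then $S=p(V)$ need not be an irreducible component of $\ol{p(\Sigma)}^{Zar}$. Here is a counterexample:
\begin{itemize}
\item take $g=1$, let $A$ be a non-CM elliptic curve and $s=(A,\lambda)$;
\item let $\Sigma$ consist of the CM points on the zero section (via $\phi=0$) together with the torsion points of $A_s$ (via $\phi=\mathrm{id}$); both kinds lie in $H_\Gamma(A,0)$;
\item then $V=A_s$ is a component of $\ol{\Sigma}^{Zar}$ and $S=\{s\}$, but $\ol{p(\Sigma)}^{Zar}=\A_\Gamma$.
\end{itemize}
The repair is to apply Theorem~\ref{thm:IHES} to $p(\Sigma\cap V)$ instead. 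Since $V$ is an irreducible component of $\ol{\Sigma}^{Zar}$, one has $\ol{\Sigma\cap V}^{Zar}=V$. By properness, $\ol{p(\Sigma\cap V)}^{Zar}=p(V)=S$, which is irreducible, so $S$ is weakly special.

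Your justification that $V$ meets torsion is also loosely phrased. The direct reason is $e(S)\subseteq\mathcal{B}=d\cdot V$, which gives a point of $V\cap A_s[d]$ for every $s\in S$.
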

	
Note that Theorem~\ref{thm:Main} reduces Conjecture~\ref{conj} to its ``Mordellic part''.
\begin{corollary}It is enough to prove Conjecture~\ref{conj} for~$\Sigma$ such that~$\Sigma\subseteq M_\Gamma(A,P)$.
\end{corollary}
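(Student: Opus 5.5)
The statement to prove is the last corollary: it suffices to prove Conjecture~\ref{conj} for subsets~$\Sigma\subseteq M_\Gamma(A,P)$. The plan is to take an arbitrary~$\Sigma\subseteq H_\Gamma(A,P)$ and an irreducible component~$V$ of~$\ol{\Sigma}^{Zar}$ with image~$S$. We then feed Theorem~\ref{thm:Main} into the mordellic case of the conjecture.

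\emph{Step 1: reduce to the mordellic part.} Theorem~\ref{thm:Main} gives~$d\in\Z_{\geq1}$ and an abelian subscheme~$\mathcal{B}\leq\mathcal{A}_\Gamma|_S$ with
\[
d\cdot V=W+\mathcal{B},\qquad W=\ol{W\cap M_\Gamma(A,P)}^{Zar}.
\]
Part~\eqref{conj:0} is already Theorem~\ref{thm:IHES}, since the image of~$H_\Gamma(A,P)$ in~$\A_\Gamma$ lies in~$\Sigma_\Gamma(A)$. We may assume~$W$ irreducible, replacing it by a component that surjects onto~$d\cdot V$ after adding~$\mathcal{B}$. Set~$\Sigma':=W\cap M_\Gamma(A,P)$. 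Then~$W$ is an irreducible component of~$\ol{\Sigma'}^{Zar}$, and its image~$S'$ in~$\A_\Gamma$ is contained in~$S$. Since~$W+\mathcal{B}$ dominates~$S$, in fact~$S'=S$.

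\emph{Step 2: apply the conjecture to~$\Sigma'$ and transfer it to~$V$.} By hypothesis, Conjecture~\ref{conj} holds for~$\Sigma'$.
\begin{itemize}
\item For~\eqref{conj:2}: it gives~$d'$,~$\phi$ and an abelian subvariety~$B'$ with~$d'\cdot W_\eta=\phi(P)+B'$. Hence
\[
dd'\cdot V_\eta=\phi(P)+B'+d'\mathcal{B}_\eta=\phi(P)+(B'+\mathcal{B}_\eta),
\]
and~$B'+\mathcal{B}_\eta$ is again an abelian subvariety.
\item For~\eqref{conj:1}: a torsion point on~$V$ yields a torsion point on~$W+\mathcal{B}$, hence one on~$W$ up to translating by a section of~$\mathcal{B}$. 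The conjecture for~$W$ then gives~$d'W=\mathcal{B}'$, and so~$dd'V=\mathcal{B}'+\mathcal{B}$, which is an abelian subscheme.
\end{itemize}

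\emph{Main obstacle.} The delicate point is the torsion transfer in~\eqref{conj:1}. A torsion point~$t\in V_s$ gives~$dt=w+b$ with~$w\in W_s$ and~$b\in\mathcal{B}_s$. Here~$b$ need not be torsion, so~$W$ need not meet torsion. I would handle this by working generically over~$\eta$. Namely, I would project~$W$ to the quotient abelian scheme~$\mathcal{A}|_S/\mathcal{B}$ and apply the argument there: the image of~$W$ meets torsion, and the hybrid/mordellic structure is preserved under quotients by abelian subschemes, up to isogeny. After that projection the argument is formal, together with the check that~$S'=S$ and that irreducible components behave well under~$d\cdot$.
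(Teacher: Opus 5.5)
Your treatment of parts~\eqref{conj:0} and~\eqref{conj:2} is the intended deduction from Theorem~\ref{thm:Main}, and it is correct. The paper offers nothing beyond the remark that Theorem~\ref{thm:Main} reduces Conjecture~\ref{conj} to its mordellic part. The choice of an irreducible $W$, the equality of the images in $\A_\Gamma$, and the identity $dd'\cdot V_\eta=\phi(P)+(B'+\mathcal{B}_\eta)$ all go through.

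\textbf{The gap is in part~\eqref{conj:1}.} You rightly note that a torsion point $t\in V_{s_0}$ only gives $dt=w+b$ with $w\in W_{s_0}$ and $b\in\mathcal{B}_{s_0}$ possibly of infinite order. So~\eqref{conj:1} cannot be invoked for $W$, and the second bullet of your Step~2 does not stand as written, as you acknowledge.

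Your remedy is to pass to $\mathcal{A}_\Gamma|_S/\mathcal{B}$ and ``apply the argument there''. The hypothesis does not license this. Conjecture~\ref{conj} only concerns subvarieties of the universal family $\mathcal{A}_\Gamma$ of principally polarised $g$-dimensional abelian varieties, over a weakly special $S\subseteq\A_\Gamma$. The quotient has smaller relative dimension, in general carries no principal polarisation, and is not a restriction of $\mathcal{A}_\Gamma$. To use the mordellic case there you would need three things:
\begin{enumerate}
\item a moduli map to some $\A_{g',\Gamma'}$;
\item weak speciality of the image of $S$;
\item membership of the image of $\Sigma'$ in some mordellic orbit $M_{\Gamma'}(A'',P'')$.
\end{enumerate}
You assert the last point and do not address the others. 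This is precisely the non-formal step, not a formality after the projection.

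\textbf{A repair that avoids the quotient.} Take $\pi\in\End_S(\mathcal{A}_\Gamma|_S)$ with $\pi(\mathcal{B})=0$ and $(\ker\pi)^0=\mathcal{B}$, for instance a multiple of the projection onto the orthogonal complement of $\mathcal{B}$ for the principal polarisation.
\begin{enumerate}
\item $\pi$ preserves $M_\Gamma(A,P)$. If $Q=\phi(P,\ldots,P)$ with $A_s/\phi(A^n)$ CM, then $\pi_s(Q)=\phi'(P,\ldots,P)$ for $\phi'(x,y,z)=\pi_s\phi(x)+\phi(y)-\phi(z)$ on $A^{3n}$. The image of $\phi'$ contains $\phi(A^n)$, so the cokernel is still CM.
\item Hence $\pi(W)=\ol{\pi(\Sigma')}^{Zar}$ is irreducible, lies over $S$, and contains the torsion point $\pi_{s_0}(w)=\pi_{s_0}(dt)$. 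So~\eqref{conj:1} applies and gives $d''\pi(W)=\mathcal{B}_1$.
\item $d''d\cdot V$ is $\mathcal{B}$-stable with $\pi(d''d\cdot V)=\mathcal{B}_1$. A dimension count then gives $c\,d''d\cdot V=(\pi^{-1}\mathcal{B}_1)^0$, an abelian subscheme, where $c$ kills the fibrewise component group of $\pi^{-1}\mathcal{B}_1$.
\end{enumerate}

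Alternatively, you can deduce~\eqref{conj:1} for $V$ directly from~\eqref{conj:2} for $V$. The point $\phi(P)$ comes from the $\C(S)/\C$-trace of $\mathcal{A}_\eta$. So if it is torsion modulo $B'+\mathcal{B}_\eta$ in one fibre, rigidity of the constant part makes it torsion of the same order in every fibre.
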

\subsection{The Mordellic case}
Two of the main strategies used to tackle problems of Zilber-Pink type are: the Pila-Zannier strategy; and Equidistribution.
Both approaches rely on "large Galois orbits".
Moreover, the Pila-Zannier strategy needs quantitative lower bounds, in terms of the height of the points one considers, for the cardinality of the Galois orbits.

Let us consider the case~\S\ref{sec:mordell lang case}, when~$P$ is not a torsion point. Let~$K/\Q$ be a finitely generated extension such that~$A$ as a model over~$K$ such that~$P\in A(K)$, and let~$g=\dim(A)$. Then~$M_\Gamma(A,P)$ contains the infinite subset~$\{(A,n\cdot P,\lambda)|n\in\Z\}\subseteq \mathcal{A}_\Gamma(K)$, and~$H_\Gamma(A,P)$ contains 
\[
\{(A,Q,\lambda)|\exists p\in \Z,q \in\Z\smallsetminus\{0\}, q\cdot Q=p\cdot P\}
\]

When~$gcd(p,q)=1$, the height of~$(A,Q,\lambda)$ is roughly~$\max\{p;q\}$. Then
\begin{itemize}
\item Pila-Zannier strategy applies to sequences~$(A,Q_n,\lambda)$ such that~$q_n\to \infty$ and~$p_n\leq f(q_n)$, with~$f(x)=a+x^b$ for some~$a,b\in\R$, or some other explicit~$f:\R\to \R$ in finer variant of the strategy.
\item Equidistribution can only apply to sequences~$(A,Q_n,\lambda)$ such that~$q_n\to \infty$.
\end{itemize}
One can of course find, for every~$f:\R\to\R$, a sequence~$(p_n,q_n)$ such that~$p_n>f(q_n)$ for~$n\gg0$.

In this setting, the methods of this article apply to all sequences for which~$q_n\to +\infty$. This is what allow us to reduce the general case to the case where~$q_n$ is bounded, or ultimately~$q_n=1$. Theorem~\ref{thm:Main} is something the Pila-Zannier strategy may fail to achieve.

In the case~$\#S=1$, Theorem~\ref{thm:Main} reduces the Mordell-Lang conjecture for abelian varieties to its Mordellic part~\cite{Fal91} by Faltings. This reduction was originally achieved in~\cite{McQ} by McQuillan (which also covers semi-abelian varieties), using a method introduced by Hindry for the Manin-Mumford conjecture.

In that context Theorem~\ref{thm:Main} recovers results of~\cite{McQ} using an entirely new method. Our method  builds on the Equidistribution proof~\cite{RWeyl} of Manin-Mumford by Weyl criterion, and combines equidistribution with o-minimality following a trick introduced by~\cite{RU}. 

\subsection{Strategy}

Our proof of Theorem~\ref{thm:Main} combines two ingredients.
\subsubsection{Equidistibution} The first is based on Theorem~\ref{thm:equi0}, which is adapted from a trick introduced by the first author in Ullmo in~\cite{RU}. 

The equidistribution method, cf. \cite{U}, traditionally proceeds as follows. We consider a sequence of subsets~$E_n\subseteq S$ in an ambiant variety~$S$. The goal is to determine~$V:=\ol{\bigcup E_n}^{Zar}\subseteq S$. One constructs, for each~$n$, a measure~$\mu_n\in Prob(S)$ with support~$E_n$. One studies the limits~$\mu_\infty$. A difficulty is that one needs to establish:
\begin{enumerate}
\item \label{enum:equi1}
that the limit measure~$\mu_\infty$ is non zero.
\item \label{enum:equi2}
that~$\Supp(\mu_\infty)$ contains~$\bigcup E_n$.
\end{enumerate}
One can then infer that~$V=\ol{Supp(\mu_\infty)}^{Zar}$.

Theorem~\ref{thm:equi0} allows us to obtain information on~$V$ when~\eqref{enum:equi2} fails, or even when~$\mu_\infty=0$. This argument can be applied when the situation, for some o-minimal structure, is related to definable families of definable subsets.

A typical example is studied in this article, and this aticle can serve as an introduction to this new paradigm of the equidistribution method.

In the universal abelian scheme~$S=\mathcal{A}_\Gamma$, consider a sequence~$s_n\in \A_\Gamma$, and subsets~$E_n\subseteq A_{s_n}$ in the corresponding fibers. Note that~$E_n\not\subseteq \Supp(\mu_\infty)$, except when~$s_n$ is stationnary. Note that~$\A_\Gamma$ is not compact, and when~$s_n$ diverges to infinity, we necessarily have~$\mu_\infty=0$.

Using these ideas, we obtain Theorem~\ref{thm:weak}.
Theorem~\ref{thm:weak} is an analogue of~\cite[1.9]{BRU}. The latter studies "Manin-Mumford conjecture" in arithmetic pencils, whereas we study Mordell-Lang conjecture for an abelian scheme over a complex algebraic variety.

\subsubsection{Galois representations}

In \S\ref{sec:Galois} we consider two Galois representations: the Tate module attached to an abelian variety, and the affine Tate module (cf. \S\ref{sec:Kummer}) attached to~$(A,P)$ for some~$P\in A$.

The first main result of~\S\ref{sec:Galois}  Theorem~\ref{thm:Serre hybrid} is a stronger variant of Serre's Theorem~\ref{thm:Serre eAK} that is uniform for abelian varieties ranging through a hybrid orbit. Theorem~\ref{thm:Serre eAK} concerns homoteties~$\lambda\in GL(1,\wh{\Z})$ of a Tate module which are in the image of the Galois representation. 
Actually, we obtain a Theorem~\ref{thm:Serre unif principal}
which concerns all the elements~$\lambda\in Z(M)(\wh{\Z})$ in the centre~$Z(M)$ of the Mumford-Tate group of~$A$.

The second main result Theorem~\ref{thm:uniform Kummer1} is a stronger variant of Ribet's Theorem~\ref{thm:Kummer} on Kummer theory for abelian varieties, that is uniform for abelian varieties ranging through a hybrid orbit (cf. Theorem~\ref{thm:Gal En}). We adapt a presentation~\cite[App. 2]{Hindry} of a proof of Ribet's theorem. The critical difference is our use of Sah's theorem Sah's~\cite[App. 2, Lem.~D]{Hindry} in the proof of Proposition~\ref{prop:412}. We also needed to fully develop Hindry's arguments and make them quantitative. This allows us to obtain a precise quantitative form~\eqref{eq:Kummer explicite} of Ribet's theorem. For instance, we obtain (cf. \S\ref{sec:CD}) a stronger form of a Theorem of Rémond that is a "key tool" (\cite[p. 16]{CD}) in a recent article~\cite{CD} of Checcoli and Dill. 

We also give a reformulation Theorem~\ref{thm:uniform Faltings} of Faltings' theorem on Tate's conjecture.

\subsubsection{Combining the ingredients} In \S\ref{sec:proof}, we prove theorem~\ref{thm:Main}. We consider~$(A_{s_n},Q_n)\in H_\Gamma(A,P)$. Using \S\ref{sec:Galois}, we construct subsets~$E_n\subseteq A_{s_n}\cap Gal(\ol{K}/K)\cdot (A_{s_n},Q_n)$. This allows us to apply Theorem~\ref{thm:weak}. We finish the proof by comparing the conclusion of Theorem~\ref{thm:weak} with the property~$(A_{s_n},d\cdot Q_n)\in M_\Gamma(A,P)$.

\subsection{Acknowledgements}The first named author was supported by the University of Manchester and Grant EP/Y020758/1.

\section{Equidistribution in o-minimal families}
In this section, we use the term \emph{definable} with respect to a chosen~$o$-minimal structure (see~\cite{VdD}).
\begin{theorem}\label{thm:equi0}
Let~$X\times Y$ be a definable set, let~$y_n\in Y$ be a sequence, let~$\nu_n$ be a sequence of probability measures in~$X$, and assume that~$\nu_n$ has a limit~$\nu_\infty$ and that
\begin{equation}\label{eq:equi1}
\forall n\gg0, Supp(\nu_n)\subseteq Supp(\nu_\infty),
\end{equation}
and that~$Supp(\nu_\infty)$ is definable and that for any definable subset~$A\subseteq Supp(\nu_\infty)$ such that~$\dim(A)<\dim(Supp(\nu_\infty))$ one has~$\nu_\infty(A)=0$. 

\begin{itemize}
\item
Let~$V\subseteq X\times Y$ be a definable subset such that
\begin{equation}\label{eq:equi2}
\forall n\gg0, Supp(\nu_n)\times\{y_n\}\subseteq V.
\end{equation}
Then, for~$n\gg0$, 
\begin{equation}\label{eq:equi3}
\dim\Bigl(Supp(\nu_\infty)\times\{y_n\}\,\cap\,V\Bigr)=\dim(Supp(\nu_\infty)).
\end{equation}In particular,~$V$ contains a non-empty open subset of~$(Supp(\nu_\infty)+x_n)\times \{y_n\}$.
\item
Assume that~$X$ has a definable Lie group structure~$(x_1,x_2)\mapsto x_1\cdot x_2$, and let~$x_n\to x_\infty$ be a convergent sequence in~$X$.

Let~$V\subseteq X\times Y$ be a definable subset such that
\begin{equation}\label{eq:equi4}
\forall n\gg0, (Supp(\nu_n)\cdot x_n)\times\{y_n\}\subseteq V.
\end{equation}
Then, for~$n\gg0$, 
\begin{equation}\label{eq:equi6}
\dim\Bigl(\left(Supp(\nu_n)\cdot x_n\right)\times\{y_n\}\,\cap\,V\Bigr)=\dim(Supp(\nu_\infty)).
\end{equation}
In particular,~$V$ contains a non-empty open subset of~$(Supp(\nu_\infty)+x_n)\times \{y_n\}$.
\end{itemize}
\end{theorem}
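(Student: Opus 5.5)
We argue by contradiction, using o-minimal uniformity of dimension in definable families. Write $Z:=\Supp(\nu_\infty)$, $k:=\dim Z$, and for $y\in Y$ let $V_y:=\{x\in X\mid (x,y)\in V\}$; this is a definable family. By o-minimality (definable choice and the fibre-dimension theorem), the set $Y_{<k}:=\{y\in Y\mid \dim(Z\cap V_y)<k\}$ is definable. Moreover, the family $\{Z\cap V_y\}_{y\in Y_{<k}}$ is a definable family of definable subsets of $Z$, each of dimension $<k$.

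\textbf{First item.} Suppose that \eqref{eq:equi3} fails for infinitely many $n$, so that $y_n\in Y_{<k}$ along a subsequence. By \eqref{eq:equi1} and \eqref{eq:equi2}, $\Supp(\nu_n)\subseteq Z\cap V_{y_n}$ for $n\gg0$, hence $\nu_n(Z\cap V_{y_n})=1$. The difficulty is that the sets $Z\cap V_{y_n}$ vary with $n$, so we cannot directly invoke $\nu_\infty(A)=0$ for a fixed $A$.

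To handle this, we use the standard o-minimal tube estimate. Fix $\varepsilon>0$. Since $\nu_\infty$ gives zero mass to every definable set of dimension $<k$, and the family is definable, a compactness argument over the family shows that there is an open $\delta$-neighbourhood $U_y$ of $Z\cap V_y$ (intersected with a large compact set $K$ carrying most of the mass) with $\nu_\infty(\overline{U_y})<\varepsilon$ uniformly in $y\in Y_{<k}$. Concretely, we argue by contradiction with a sequence $\delta_j\to0$ and $y_j$ satisfying $\nu_\infty(U^{\delta_j}_{y_j})\geq\varepsilon$. After passing to the definable closure of the family of limits in the Hausdorff sense on $K$, the limit of $Z\cap V_{y_j}\cap K$ has dimension $<k$; this uses that Hausdorff limits of definable families have dimension at most that of the members (the Bröcker / van den Dries limit theorem). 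Hence the limit has $\nu_\infty$-measure $0$, which contradicts upper semicontinuity of mass on closed sets.

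With this uniform estimate in hand, the Portmanteau theorem (open sets) combined with the uniform bound gives
\[
\nu_n(U_{y_n})\leq \nu_\infty(\overline{U_{y_n}})+o(1).
\]
This is the delicate step: the set $U_{y_n}$ moves with $n$. We circumvent it by again using finiteness of definable families, covering them by finitely many fixed tubes up to $\varepsilon$. The estimate yields $1\leq\varepsilon+o(1)$, a contradiction. Thus $\dim(Z\cap V_{y_n})=k$, and a $k$-dimensional definable subset of $Z$ contains a nonempty relatively open subset of $Z$ (take a top-dimensional cell).

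\textbf{Second item.} We reduce to the first. Consider the definable set $V':=\{(x,(y,x'))\mid (x\cdot x',y)\in V\}\subseteq X\times(Y\times X)$ with parameters $y'_n=(y_n,x_n)$. Then \eqref{eq:equi4} becomes \eqref{eq:equi2} for $V'$. The first item gives $\dim(Z\cap V'_{y'_n})=k$, i.e.\ $\dim(Z\cdot x_n\cap V_{y_n})=k$. To get \eqref{eq:equi6} with $\Supp(\nu_n)\cdot x_n$, we note that $\Supp(\nu_n)\subseteq Z$ and that the same measure argument applies to the set of points of $Z$ not covered. Convergence $x_n\to x_\infty$ is used in the compactness step, to keep the parameters in a compact region.

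\textbf{Main obstacle.} The hardest step is the uniform small-mass estimate for tubes around the lower-dimensional members of a definable family. It relies on the Hausdorff-limit dimension bound for definable families and on the hypothesis that $\nu_\infty$ charges no definable set of dimension $<k$.
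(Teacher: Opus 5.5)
Your second bullet is the paper's own argument. The paper also sets $V'=\{(v',x,y)\mid (v'\cdot x,y)\in V\}$ with parameters $(x_n,y_n)$, applies the first bullet, and translates back by right multiplication by $x_n$.

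The genuine difference is the first bullet, which the paper does not prove. After making $X$ bounded, it invokes \cite[Th.~4.3, Cor.~4.4]{RU} with $K=\ol{X}$, $A(b)=\Supp(\nu_\infty)$ and $A'(y_n)=V_{y_n}$. You instead reprove that o-minimal equidistribution statement from van den Dries' theorem on Hausdorff limits of definable families, together with the portmanteau theorem. This is sound and self-contained. It also shows exactly what is used: a compact ambient space, definability and dimension control of Hausdorff limits, and \eqref{eq:equi1} to trap the limit inside $Z=\Supp(\nu_\infty)$.

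It can be shortened considerably. Suppose $\dim(Z\cap V_{y_n})<k$ along a subsequence, and replace $X$ by a bounded definable copy. Extract further so that the closures $\ol{Z\cap V_{y_n}}$ in the compact $\ol{X}$ converge in the Hausdorff metric to some $F$. Then $F\cap X$ is a definable subset of $Z$ of dimension $<k$. On the other hand, $\nu_n(\ol{F^\delta})=1$ for $n\gg0$ gives $\nu_\infty(\ol{F^\delta})=1$ for every $\delta>0$, hence $\nu_\infty(F\cap X)=1$, a contradiction. Your uniform tube estimate and the covering by finitely many fixed tubes then become unnecessary. In any case, that finiteness comes from total boundedness of the Hausdorff hyperspace, not from a finiteness property of definable families.

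The step that fails is the last sentence of your second bullet. There you try to upgrade $\dim(Z\cdot x_n\cap V_{y_n})=k$ to \eqref{eq:equi6} as printed, with $\Supp(\nu_n)\cdot x_n$. No argument can give this. By \eqref{eq:equi4}, the set in \eqref{eq:equi6} is just $(\Supp(\nu_n)\cdot x_n)\times\{y_n\}$, whose dimension is $\dim\Supp(\nu_n)$. Take $X=\R/\Z$, $V=X\times Y$, $x_n=0$, $\nu_n$ uniform on $\tfrac{1}{n}\Z/\Z$ and $\nu_\infty$ the Haar measure. Every hypothesis holds, yet the two sides of \eqref{eq:equi6} are $0$ and $1$.

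The formula is a misprint for $\dim\bigl((\Supp(\nu_\infty)\cdot x_n)\times\{y_n\}\cap V\bigr)=\dim\Supp(\nu_\infty)$. This is what the paper's proof establishes, what the ``in particular'' clause describes, and what the proof of Theorem~\ref{thm:weak} uses. You already have it, so drop the upgrade.

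Two minor points:
\begin{itemize}
\item The hypothesis $x_n\to x_\infty$ plays no role: the first bullet allows arbitrary parameters, and the paper never uses it. Your remark about keeping parameters in a compact region is therefore spurious.
\item For the ``in particular'' clause, a top-dimensional cell need not itself be open in $Z$. The clean argument is that a definable $D\subseteq Z$ with empty interior in $Z$ satisfies $D\subseteq\ol{Z\smallsetminus D}\smallsetminus(Z\smallsetminus D)$, which has dimension $<k$.
\end{itemize}
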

\begin{proof}Without loss of generality we may assume that~$X$ is a bounded subset of~$\R^n$.
We apply~\cite[Th.~4.3, Cor.~4.4]{RU} with~$K=\ol{X}$; with~$b_i=0\in B:=\{0\}$ and~$A(b)=Supp(\nu_\infty)$; with~$b'_i=y_n$ in~$Y=B'$; with~$A'(b'_i)\times \{b'_i\}=V_i:=V\cap \left(X\times \{b'_i\}\right)$. The assumption~$\nu_\infty(A)=0$ of Theorem~\ref{thm:equi} implies the assumption~\cite[(4.3)]{RU}, and~\eqref{eq:equi1} implies the assumption~\cite[(4.3)]{RU}. Our assumption~\eqref{eq:equi2} gives~$\nu_n(V_n)=1$, and this implies the assumption~\cite[(4.4)]{RU}. Let~$d:=\dim(Supp(\nu_\infty))$. Then the assumption~\cite[(4.4)]{RU} is satisfied.

We deduce
\[
\forall n\gg0, d=\dim(\Supp(\nu_\infty)\cap V_n).
\]
This proves~\eqref{eq:equi3}.

Let us now prove~\eqref{eq:equi6}.

Let~$Y'=X\times Y$ and~$y'_n=(x_n,y_n)$. Let~$V'=\{(v',x,y)|(v'\cdot x,y)\in V\}$, which is definable. Let~$V'_{(x,y)}$ be such that~$V'\cap X\times \{(x,y)\}=V'_{(x,y)}\times \{(x,y)\}$ and let~$V_y$ be such that~$V_y\times\{x\}=V\cap \{y\}$. We have~$V'_{(x,y)}\cdot x=V_y$. By~\eqref{eq:equi4}, we have
\[
Supp(\nu_n)\cdot x_n\subseteq V_{y_n}= V'_{(x_n,y_n)}\cdot x_n.
\]
Therefore~$Supp(\nu_n)\subseteq V'_{(x_n,y_n)}$. 
We apply~\eqref{eq:equi3} to~$V'\subseteq X\times Y'$ and~$y'_n=(x_n,y_n)$. We have
\begin{equation}
\forall n\gg0, \dim\Bigl(Supp(\nu_\infty)\times\{(x_n,y_n)\}\,\cap\,V'\Bigr)=\dim(Supp(\nu_\infty)).
\end{equation}
In other terms,~$\dim(Supp(\nu_\infty)\cap V'_{(x_n,y_n)})=\dim(Supp(\nu_\infty))$, or
\[
\dim(Supp(\nu_\infty)\cap V_{y_n}\cdot x_n^{-1})=\dim(Supp(\nu_\infty)).
\]
We observe~$\dim(Supp(\nu_\infty)\cap V_{y_n}\cdot x_n^{-1})=\dim(Supp(\nu_\infty)\cdot x_n\cap V_{y_n})$. This implies~\eqref{eq:equi6} and concludes the proof of Theorem~\ref{thm:equi}.
\end{proof}

\subsection{Some equidistribution statements}\label{sec:31}

Consider~$K=\R^N/\Z^N$ and~$e\in\Z_{\geq1}$.
For a real Lie subgroup~$G\leq K$, let~$\nu_{G}$ be the Haar probability measure associated with~$G$, viewed as a probability measure on~$K$ concentrated on~$G$. For~$k\in K$, we denote by~$\nu_{G+k}$ the image of~$\nu_G$ by the translation~$k'\mapsto k+k':K\to K$. Let~$L_e:=\{\lambda^e|\lambda\in\wh{\Z}^{\times}\}$ act on~$K_{tors}=\Q^N/\Z^N$. For~$G\leq K$ a real Lie subgroup, and~$t\in K_{tors}$ and~$k\in K$ we define
\[
E(G,t,k)=\bigcup_{t'\in L_e\cdot t} G+t'+k
\]
and
\[
\mu_E:=\frac{1}{\#L_e\cdot t}\sum_{t'\in L_e\cdot t} \nu_{G+t'+k}
\]

The following implies that the set of measures of the form~$\mu_{E(G,t,k)}$ is a compact subset in the space of probabilities on~$K$.
\begin{theorem}\label{thm:equi}
Let~$G_i$ be a sequence of real Lie subgroups of~$K$, let~$t_i\in K_{tors}=\Q^N/\Z^N$ be
a sequence of torsion points, and let~$k_i$ be a sequence in~$K$.

Assume that the sequence~$\mu_{E(G_i,t_i,k_i)}$ has a limit~$\mu_\infty$ in the space of probabilities on~$K$.

Then there exists~$G_\infty\leq K$, and~$t_\infty\in K_{tors}$ and~$k_\infty\in K$ such that
\[
\mu_\infty=\mu_{E(G_\infty,t_\infty,k_\infty)}.
\]
Moreover, if the sequence~$k_i$ has a limit~$k_\infty$ in~$K$, then
\[
\mu_{E(G_i,t_i,0)}\to \mu_{E(G_\infty,t_\infty,0)}
\]
and
\[
\forall i\gg0, E(G_i,t_i,0)\subseteq E(G_\infty,t_\infty,0).
\]
\end{theorem}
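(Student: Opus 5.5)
Work on $K=\R^N/\Z^N$ via Fourier coefficients. Each $\mu_E$ is a finite average of Haar measures on translated closed subgroups; I take $G_i$ closed, and replacing $G_i$ by its closure does not change $\nu_{G_i}$. Its Fourier coefficient at a character $\chi\in\Z^N$ is
\[
\wh{\mu}_{E(G,t,k)}(\chi)=\chi(k)\,\mathbf 1_{\chi\in G^\perp}\,\frac{1}{\#L_e\cdot t}\sum_{t'\in L_e\cdot t}\chi(t').
\]
Write $n$ for the order of $\chi(t)$. Then $\chi(t')$ runs over the orbit of a primitive $n$-th root of unity $\zeta$ under $\lambda\mapsto\lambda^e$, with $\lambda\in(\Z/n)^\times$. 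The orbit average is a Gauss-type sum $c_e(\chi(t))$ depending only on $\chi(t)$, and it tends to $0$ as $n\to\infty$ uniformly in the orbit. The reason is that $L_e\bmod n$ has index bounded by a function of $e$ and the number of prime factors in a controlled way, and an averaged Ramanujan-type sum over a subgroup of bounded index is $O_e(n^{-\delta})$. Granting this estimate, I proceed as follows.

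\textbf{Step 1 (extract subsequences).} Pass to subsequences so that the following hold for every $\chi$. Since the lattices $G_i^\perp$ lie in the countable set of subgroups of $\Z^N$, take a subsequence on which $\mathbf 1_{\chi\in G_i^\perp}$ converges pointwise. Its limit is the indicator of a subgroup $\Lambda$, because being closed under sums and negatives passes to pointwise limits. Arrange also that $\chi(t_i)$ either is eventually constant or has order tending to infinity, and that $k_i\to k_\infty$ by compactness. Let $\Lambda'\subseteq\Lambda$ be the set of $\chi\in\Lambda$ on which $\chi(t_i)$ stabilises. Then $\Lambda'$ is a subgroup: it is closed under sums because a sum of stabilising values stabilises. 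If $\chi\in\Lambda\smallsetminus\Lambda'$, the coefficient tends to $0$ by the estimate above.

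\textbf{Step 2 (identify the limit).} Since $\Lambda'$ is finitely generated, choose $i_0$ such that for $i\geq i_0$ we have $\Lambda'\subseteq G_i^\perp$ and $\chi(t_i)=\chi(t_\infty)$ for all $\chi\in\Lambda'$, for some fixed $t_\infty$. Here I take $t_\infty:=t_{i_0}$, but I must check that this choice gives the correct $\chi(t_\infty)$ on all of $\Lambda'$; it does, by finite generation applied to a basis. Set $G_\infty:=(\Lambda')^\perp$. Then the limit Fourier coefficients are $\chi(k_\infty)\,\mathbf 1_{\Lambda'}(\chi)\,c_e(\chi(t_\infty))$. These are exactly the coefficients of $\mu_{E(G_\infty,t_\infty,k_\infty)}$, provided that whenever $\chi\in\Lambda'$ satisfies $c_e(\chi(t_\infty))\ne0$ the computation is consistent. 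It is, since both sides use the same formula. If the original $k_i$ do not converge, pass to a subsequence; the limit $\mu_\infty$ is unchanged, so it equals this measure.

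\textbf{Step 3 (the ``moreover'' part).} When $k_i\to k_\infty$, the same Fourier argument with $k=0$ gives $\mu_{E(G_i,t_i,0)}\to\mu_{E(G_\infty,t_\infty,0)}$. This must hold along the full sequence, not just a subsequence. I would handle this by a subsequence-of-subsequence argument, using that $G_\infty$ and $t_\infty$ are determined by $\mu_\infty$ and $k_\infty$. For the inclusion, $\Lambda'\subseteq G_i^\perp$ gives $G_i\subseteq G_\infty$ for $i\ge i_0$. Moreover $t_i-t_\infty\in(\Lambda')^\perp=G_\infty$, and the same holds for $L_e$-conjugates, since $\lambda^e$ acts compatibly with characters. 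Hence $G_i+t'_i\subseteq G_\infty+\lambda^e t_\infty$, and so $E(G_i,t_i,0)\subseteq E(G_\infty,t_\infty,0)$.

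\textbf{Main obstacle.} The hardest step is the decay estimate for $c_e$, namely bounding averages of $\zeta^{\lambda^e}$ over $\lambda\in(\Z/n)^\times$ as $n\to\infty$. I would first reduce by the Chinese remainder theorem to prime powers. For $p^k$ with $k\geq2$, the sum vanishes once $p^{k-1}$ exceeds a bound in terms of $e$, since the $e$-th power subgroup contains $1+p^{k-1}\Z$. For primes $p$, I would use Weil/Gauss-sum bounds of size $(e-1)\sqrt p$ divided by $(p-1)/\gcd(e,p-1)$. Care is needed because small primes contribute factors not tending to zero, so the decay must come from the large prime or high prime-power parts of $n$.
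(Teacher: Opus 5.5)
Your argument is correct and follows the paper's skeleton. The paper factors $\mu_{E(G,t,k)}=\mu_{E(G,0,0)}\ast\mu_{E(0,t,0)}\ast\mu_{E(0,0,k)}$ and treats the subgroup factor exactly as you do: the pointwise limit of $1_{G_i^\perp}$ is the indicator of a subgroup $\Lambda$, and finite generation gives $G_i\leq\Lambda^\perp$ for $i\gg0$. For the torsion factor it simply quotes \cite{RWeyl}: $\lim\mu_{E(0,t_i,0)}=\mu_{E(B,t_\infty,0)}$ with $L_e\cdot t_i\subseteq B+L_e\cdot t_\infty$ for $i\gg0$, so the final group is $\Lambda^\perp+B$.

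The genuine difference is that you prove this torsion input yourself. Your $\Lambda'$ is $\Lambda\cap\Lambda_t$, where $\Lambda_t$ is the group of characters on which $\chi(t_i)$ stabilises. Thus $\Lambda_t^\perp$ plays the role of $B$, and the vanishing of the limit coefficients off $\Lambda_t$ is your explicit bound $c_e(\zeta)=O_e(\mathrm{ord}(\zeta)^{-\delta})$. This gives you three things: a self-contained, quantitative proof; the explicit description $G_\infty=(\Lambda\cap\Lambda_t)^\perp$; and the slightly stronger conclusion $t_i\in t_\infty+G_\infty$. The paper's route is shorter but rests entirely on the external result.

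Your first justification of the bound (``index bounded'') is not right, since the index of $L_e$ modulo $n$ grows like $\prod_{p\mid n}\gcd(e,p-1)$. The mechanism in your last paragraph is the correct one, and it needs one closing sentence. If $c_e(\zeta)\neq0$, then every exponent $a_p$ of $n=\mathrm{ord}(\zeta)$ is bounded in terms of $v_p(e)$, and $a_p\leq1$ for $p\nmid e$. So $n\to\infty$ forces the largest prime factor $p$ of $n$ to infinity. Its local factor is $O(e\,p^{-1/2})$, while all other local factors have modulus at most $1$.

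One step in Step 3 needs repair: $G_\infty$ and $t_\infty$ are not determined by the limit measure. For instance, in $\R/\Z$ with $e=1$ one has $\mu_{E(\{0\},1/4,0)}=\mu_{E(\{0,1/2\},1/4,0)}$. What is determined is the support $E(G_\infty,t_\infty,0)=\Supp(\mu_\infty\ast\delta_{-k_\infty})$. This set is closed because $G_\infty$ is closed and $L_e\cdot t_\infty$ is finite.

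So run your subsequence-of-subsequence argument for the inclusion using this support. For the convergence, just note that $\mu_{E(G_i,t_i,0)}=\mu_{E(G_i,t_i,k_i)}\ast\delta_{-k_i}\to\mu_\infty\ast\delta_{-k_\infty}$ along the full sequence. The paper's proof likewise works only along an extracted subsequence and leaves this passage implicit, so with that fix your write-up is the more complete one.
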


Let~$\ast$ denote the convolution operation on probability measures on~$K$.
We observe that
\[
\mu_{E(G,t,k)}=
\mu_{E(G,0,0)}
\ast
\mu_{E(0,t,0)}
\ast
\mu_{E(0,0,k)}.
\]
Assume that~$\mu_{E(G_i,t_i,k_i)}$ has a limit~$\mu_\infty$. Since~$K$ is compact,
after possibly extracting a subsequence, there exist limits
\[
\alpha=\lim \mu_{E(G_i,0,0)}\qquad
\beta=\lim \mu_{E(0,t_i,0)}\qquad
\gamma=\lim \mu_{E(0,0,k_i)}.
\]
By continuity, we have~$\mu_\infty=\alpha\ast\beta\ast\gamma$. We note that~$
\alpha=\lim \mu_{E(0,0,k_i)}$ is the Dirac mass~$\delta_{k_i}$ and converges if and only if~$k_i$
has a limit~$k_\infty$ in~$K$. We have then
\[
\gamma=\delta_{k_{\infty}}=\mu_{E(0,0,k_\infty)}.
\]

By~\cite{RWeyl}, we have
\[
\beta=\mu_{E(B,t_\infty,0)}
\]
for some~$B\leq K$ and~$t_\infty\in K_{tors}$ such that
\[
\forall i\gg0, L_e\cdot t_i\subseteq B+L_e\cdot t_\infty.
\]

Moreover, we claim that, for some Lie subgroup~$G_{\infty}\leq K$, we have
\[
\alpha=\mu_{E(G_\infty,0,0)}\qquad\forall i\gg0, G_i\leq G_\infty.
\]
\begin{proof}Let~$X=Hom(K,U(1))\simeq \Z^n$. For~$\chi\in X$, let~$m_\chi(\nu_{G_i})=\int \chi(x) d\nu_{G_i}(x)$.
Let~$\Lambda_i:=\{\chi\in X|\chi(G_i)=\{1\}\}$. Then~$\chi\mapsto m_\chi(\nu_{G_i}):X\to \C$ is the indicator function~$1_{\Lambda_i}$ of~$\Lambda_i$. By Weyl criterion the function~$f:\chi\mapsto m_\chi(\alpha):X\to \C$ is the pointwise limit of~$1_{\Lambda_i}$. For~$\chi,\chi'\in X$ and~$i\in\Z_{\geq1}$ 
\[
m_\chi(\nu_{G_i})\in\{0;1\}.
\]
It follows that~$f(\chi)\in\{0;1\}$ and~$f(\chi)=1\Leftrightarrow\exists j_{\chi}\in\Z_{\geq1} \forall i\geq j_{\chi}, \chi\in\Lambda_i$.
Assume~$f(\chi)=f(\chi')=1$. For~$i\geq \max\{j_\chi;j_\chi'\}$, we have~$\chi',\chi\in\Lambda_i$.
Since each~$\Lambda_i$ is a group, we have
\[
\forall i\geq \max\{j_\chi;j_\chi'\}, \chi-\chi'\in \Lambda_i.
\]
We deduce~$f(\chi-\chi')=1$. Moreover, for~$\chi=1:X\to U(1)$, we have~$f(1)=1$. We deduce that~$\Lambda=\{\chi|f(\chi)=1\}$ is a subgroup of~$X$. Let~$G_\infty:=\{k\in K|\forall \chi\in\Lambda,\chi(k)=1\}$. Then
\[
m_\chi(\alpha)=f(\chi)=1_{\Lambda}(\chi)=m_\chi(\nu_{G_\infty}).
\]
By Weyl criterion we have
\[
\lim \mu_{E(G_i,0,0)}
=
\alpha
=\nu_{G_{\infty}}.
\]
Let~$\chi_1,\ldots,\chi_r$ be generators of~$\Lambda$. Then, for~$i\geq \max\{j_{\chi_1};\ldots;j_{\chi_r}\}$, we have
\[
\chi_1,\ldots,\chi_r \in \Lambda_i.
\]
Since~$\Lambda_i$ is a subgroup, we have~$\Lambda\leq \Lambda_i$. This implies~$G_i\leq G_{\infty}$.
\end{proof}

\section{On weak Mordell-Lang in geometric families}

\begin{theorem}\label{thm:weak}
Let~$A\to S$ be an abelian scheme over an algebraic variety over~$\C$ and~$e\in\Z_{\geq1}$. We assume that, 
\begin{equation}\label{eq:hypmonodromy}
\text{for some~$s\in S$, the monodromy action of~$\pi_1(S(\C))$ on~$\End(A_s)$ is trivial.}
\end{equation}
Let~$e\in\Z_{\geq1}$ and let~$L_e:=\{\lambda^e|\lambda\in\wh{\Z}^\times\}$ act on every commutative torsion group.

Let~$s_n$ be a generic sequence in~$S$ and  let~$A_n$ denote the fibre above~$s_n$. For every~$n$, let~$a_n\in (A_n)_{tors}$  be torsion point, let $G_n\leq A_n$  an algebraic subgroup and let~$k_n\in A_n$ be point of~$A_n$. Let
\[E_n:=G_n+L_e\cdot a_n+k_n
=\{g+\lambda^e\cdot a_n+k_n|g\in G_n, \lambda\in \wh{\Z}\}\subseteq A_n(\C)
\]
and let~$V$ be the Zariski closure of~$\bigcup E_n$ in~$A$.

Then for every irreducible component~$C$ of~$V$, there exists an abelian subscheme~$B\leq A$ such that~$C=C+B$
and~$\#E_n/B_{s_n}=O(1)$.
\end{theorem}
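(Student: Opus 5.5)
We reduce to a real-analytic picture over a fundamental domain and then invoke Theorem~\ref{thm:equi0} together with Theorem~\ref{thm:equi}. By~\eqref{eq:hypmonodromy}, after a finite étale base change (harmless for Zariski closures and for the conclusion), the endomorphisms of the generic fibre extend over~$S$. We then choose a definable (in~$\R_{an,\exp}$, via Peterzil--Starchenko) trivialisation of the Betti local system over a definable fundamental set~$\mathcal{F}\subseteq\tilde S$. Over~$\mathcal{F}$ this identifies each fibre~$A_s(\C)$ with the fixed real torus~$K=\R^{2g}/\Z^{2g}$, definably in~$s$.

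Because~$\End(A_s)$ is locally constant, the algebraic subgroups~$G_n\leq A_n$ correspond to real subtori~$G'_n\leq K$. Torsion points go to~$\Q^{2g}/\Z^{2g}$, and the action of~$L_e$ is the same on both sides. Hence~$E_n$ corresponds to~$E(G'_n,t_n,k'_n)\subseteq K$ in the notation of~\S\ref{sec:31}, with~$\mu_{E_n}:=\mu_{E(G'_n,t_n,k'_n)}$ supported on it. Since~$K$ is compact, we pass to a subsequence along which~$k'_n\to k'_\infty$ and~$\mu_{E_n}$ converges. Theorem~\ref{thm:equi} then gives a limit of the form~$\mu_{E(G_\infty,t_\infty,0)}$, with~$E(G'_n,t_n,0)\subseteq E(G_\infty,t_\infty,0)$ for~$n\gg0$. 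This limit satisfies the hypotheses of Theorem~\ref{thm:equi0}: the support is a finite union of cosets of~$G_\infty$, hence definable, and its Haar measure gives zero mass to definable subsets of lower dimension. We feed this into the second bullet of Theorem~\ref{thm:equi0}, applied to~$X=K$, $Y=\mathcal{F}$, $x_n=k'_n$ and the definable set~$V'$ obtained as the image of~$V$ (one irreducible component at a time). It shows that~$V\cap A_n$ contains a nonempty open subset of a translate of a coset of~$G_\infty$, for all~$n\gg0$.

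Next we turn this into the abelian subscheme~$B$. Openness upgrades to containment: the set of~$s$ with~$V_s\supseteq x+G_\infty$ for some~$x$ is analytic and constructible by Chevalley, and the~$s_n$ are generic. The groups~$G_\infty$ form a locally constant family, preserved by monodromy after the base change. A subtorus that is a complex subgroup in the fibres~$A_{s_n}$ for a Zariski dense set of~$s_n$ is, by a Hodge-locus argument, a flat family of abelian subvarieties~$B\leq A$ with~$B_s\simeq G_\infty$. We then replace~$C$ by the union of~$C+B$ and argue that~$C=C+B$. For this, consider the subset of~$C$ where~$C$ is stable under translation by the fibres of~$B$. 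It is constructible, and it contains~$\bigcup_n E_n\cap C$ for infinitely many~$n$ on each component, since each~$E_n$ is a finite union of~$G_\infty$-cosets. Its Zariski closure is therefore~$C$.

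Finally, the bound~$\#E_n/B_{s_n}=O(1)$ follows from the inclusion~$E(G'_n,t_n,0)\subseteq E(G_\infty,t_\infty,0)$. The image of~$E_n$ in~$A_n/B_{s_n}$ has at most~$\#L_e\cdot t_\infty$ elements, a fixed number. To recover the full sequence from subsequences, we note that the components~$C$ are finitely many. If the bound failed along some subsequence, extracting again would give a contradiction with the subsequence version, applied with~$B$ maximal.

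The main obstacle is the passage from the real subtorus~$G_\infty$ to an algebraic abelian subscheme~$B$ of~$A$, that is, showing that~$G_\infty$ is a complex subtorus of every fibre. The limit of the complex subtori~$G'_n$ need not obviously be complex in a fixed fibre, since the complex structure varies with~$s_n$. I would first try to use that~$V$ is algebraic: the subtorus~$G_\infty$ is contained, up to translation, in~$V_{s_n}$ along a generic sequence. Ax--Schanuel / bi-algebraicity in the fibres, or simply the fact that a maximal subtorus in which a subvariety of an abelian variety is translation invariant is an abelian subvariety, should then force~$G_\infty$ (or the subgroup it generates) to be complex-algebraic, after which monodromy invariance spreads it to all of~$S$.
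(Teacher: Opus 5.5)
\textbf{There is a genuine gap, at the step you yourself flag as the main obstacle.} Your skeleton matches the paper's: the definable trivialisation $\tau$ of Theorem~\ref{thm:PS}, Theorem~\ref{thm:equi} to identify the limit $\mu_{E(G_\infty,t_\infty,0)}$ with $G'_n\le G_\infty$ for $n\gg0$, Theorem~\ref{thm:equi0} applied to a definable family built from $V$, then the Hodge-locus Theorem~\ref{thm:locus}. Neither of your proposed remedies closes the gap.

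The limit $G_\infty$ can fail to be complex in every fibre. For example, if $\tau(G_n)=\{(k/n,0,\dots,0)\}$, then $G_\infty$ is a real circle. Passing to ``the subgroup it generates'' means taking
\[
D_n:=\bigl(\ol{\tau_{s_n}^{-1}(G_\infty)}^{Zar}\bigr)^0\le A_n .
\]
This is an abelian subvariety with $D_n+E_n\subseteq V$, granted that $V\cap A_n$ contains all of $\tau_{s_n}^{-1}(G_\infty)+L_e\cdot a_n+k_n$ rather than one open piece. That containment comes from analytic continuation inside the Zariski-closed fibre $V\cap A_n$, not from a Chevalley/genericity argument in $s$. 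The problem is that $\tau_{s_n}(D_n)$ varies with $n$, because the complex span of a fixed real subtorus moves with the Hodge structure of $A_{s_n}$. So your Hodge-locus step, which needs one fixed lattice, does not apply. Nor do you get the bound $\#E_n/B_{s_n}=O(1)$, which needs $\tau(B_{s_n})\supseteq G_\infty^0$.

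\textbf{The missing idea is a normalisation made before extracting limits.} For each $n$, take an abelian subvariety $B'_n\le A_n$ maximal with $B'_n+E_n\subseteq V$, and replace $G_n$ by $G_n+B'_n$. This leaves $V$ unchanged and gives $G_n^0=B'_n$. After the limit you have
\[
D_n\supseteq \tau_{s_n}^{-1}(G_\infty)^0\supseteq G_n^0=B'_n \quad\text{and}\quad D_n+E_n\subseteq V,
\]
so maximality forces $D_n=B'_n$. Hence $\tau_{s_n}(B'_n)=G_\infty^0$ is one fixed subtorus, complex in every $A_n$ for $n\gg0$. This is exactly the input Theorem~\ref{thm:locus} needs: it gives $B$ with $B_{s_n}=B'_n$, and then
\[
\#E_n/B_{s_n}\le \#\pi_0(G_\infty)\cdot\#\bigl((L_e\cdot t_\infty+G_\infty)/G_\infty\bigr).
\]
Iterating your ``generated subgroup'' construction by induction on $\dim G_n^0$ would amount to the same thing.

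\textbf{Smaller points.}
\begin{itemize}
\item Theorem~\ref{thm:equi0} must be applied to the family built from all of $V$, not one component at a time, since $E_n$ need not lie in a single component.
\item $E_n$ is a finite union of $G_n$-cosets, not of $G_\infty$-cosets.
\item To get $C+B=C$, first choose $c_n\in E_n\cap C$ generic in $C$ and off the other components of $V$. Then the irreducible set $c_n+B_{s_n}\subseteq V$ lies in $C$, and you conclude with the closedness argument of Proposition~\ref{prop:truc}.
\end{itemize}
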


\subsection{Definable trivilialisation}
We recall the following.
\begin{theorem}[{\cite[Theorem~1.2]{PS}}]\label{thm:PS} Let~$\alpha:A\to S$ be an abelian scheme over an irreducible complex algebraic variety.

There exists a~$\R_{an,\exp}$-definable map~$\tau:A(\C)\to U(1)^{2g}$ such that for every~$s\in S$, the map~$\tau_s:A_s(\C)\to A(\C)\to U(1)^{2g}$ is a Lie group isomorphism. We say that~$\tau$ is a definable trivialisation of~$A\to S$
\end{theorem}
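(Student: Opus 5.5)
We would prove Theorem~\ref{thm:PS} by reducing to the universal family over a Siegel fundamental set, and then defining $\tau$ fibrewise through real period coordinates.

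\emph{Reduction to the universal family.} After replacing $S$ by a finite étale cover (which does not affect definability, since finite covers of algebraic varieties are definable), we may assume that $A\to S$ carries a polarisation of some fixed type $D$ and a level-$N$ structure with $N\geq3$. Then there is an algebraic classifying map $S\to \mathbf{A}_{D,\Gamma}$ with $A\simeq S\times_{\mathbf{A}_{D,\Gamma}}\mathcal{A}_{D,\Gamma}$. A definable trivialisation of the universal family pulls back to one of $A$: the pulled-back map is the composite of an algebraic (hence definable) map with a definable one, and it is still a fibrewise Lie group isomorphism. If one prefers to work only with principal polarisations, one can instead pass to $(A\times A^\vee)^4$ via Zarhin's trick; $A$ is then an abelian subscheme, and a trivialisation of it is recovered by restriction followed by a fixed linear projection $U(1)^{M}\to U(1)^{2g}$ adapted to the sublattice.

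\emph{The universal family over a Siegel set.} Fix a Siegel fundamental set $\mathfrak{F}\subseteq\mathfrak{H}_g$ for $\Gamma$, which is semialgebraic. Over $Z\in\mathfrak{F}$ the fibre is $\C^g/(\Z^g+Z\Z^g)$, or $\C^g/(D\Z^g+Z\Z^g)$ for type $D$. Every $w\in\C^g$ can be written uniquely as $w=x+Zy$ with $x,y\in\R^g$, so the fibre is identified with $\R^{2g}/\Z^{2g}$, and composing with $\exp(2\pi i\,\cdot)$ identifies it with $U(1)^{2g}$. The map $(w,Z)\mapsto(x,y)$ is real algebraic, and this identification is a Lie group isomorphism on each fibre.

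\emph{Definability of the uniformisation.} The uniformisation map is
\[
\Theta:\{x+Zy:x,y\in[0,1)^g\}\times\mathfrak{F}\to\mathcal{A}_\Gamma(\C),
\]
given by theta functions with characteristics, which embed the fibres projectively. We must show that the restriction of $\Theta$ to this fundamental-parallelogram-times-Siegel-set is $\R_{an,\exp}$-definable. Granting this, $\Theta$ is a definable surjection onto $\mathcal{A}_\Gamma(\C)$ that is injective after discarding finitely many overlapping boundary pieces of $\mathfrak{F}$. Its inverse composed with the real-coordinate map above is then definable, and this composite is $\tau$.

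We expect this definability to be the main obstacle. The parameter $Z$ ranges over a non-compact set, and $\operatorname{Im}Z$ tends to infinity in the cusp, so restricted analytic functions do not suffice. The approach we would try first follows Peterzil--Starchenko. Write each theta series as a finite sum of exponentials of polynomial expressions in $(x,y,Z)$, multiplied by a remainder in the variables $q_{jk}=\exp(2\pi i Z_{jk})$. Reduction theory on the Siegel set gives $\operatorname{Im}Z$ Minkowski-reduced with bounded $\operatorname{Re}Z$, which yields uniform convergence estimates in the $q$-variables. These estimates make the remainder a restricted analytic function of bounded quantities, and the exponential factors are definable in $\R_{an,\exp}$.
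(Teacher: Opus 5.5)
\textbf{Review of the proposal.}

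Your reduction follows the paper's: pass to an étale cover $S'\to S$ carrying a polarisation and a level-$N$ structure, then pull back a trivialisation from the universal family over $\mathbf{A}_{g,\psi}(N)$. The pullback direction is fine. The step back down from $S'$ to $S$, however, is not justified. Knowing that $S'(\C)$ is definable does not tell you how a trivialisation $\tau'$ of $A'=A\times_S S'$ produces one of $A$.

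The missing idea, which is in fact the substance of the paper's proof, is definable choice applied on the base. Take a definable section $\sigma:S(\C)\to S'(\C)$ of the surjection $S'(\C)\to S(\C)$, and set $\tau(a):=\tau'(a,\sigma(\alpha(a)))$, using $A_s=A'_{\sigma(s)}$. In general $\sigma$ cannot be continuous, but this is harmless because the statement only asks for definability and fibrewise Lie group isomorphisms. The choice must be made on $S$, not pointwise on $A(\C)$: lifting two points of the same fibre $A_s$ to different sheets would destroy additivity of $\tau_s$. The same applies to your step ``injective after discarding finitely many overlapping boundary pieces of $\mathfrak{F}$''. The clean way is to choose a definable section of $\mathfrak{F}\to\mathbf{A}_\Gamma(\C)$ on the base, and then use the fibre identification $\C^g/\Lambda_Z\simeq A_s$ given by the chosen $Z$.

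For the universal family itself, the paper simply invokes \cite[Theorem~1.2]{PS}, which is exactly what you set out to reprove. Your sketch of that result does not work as stated. The theta series is not a convergent power series in $q_{jk}=\exp(2\pi i Z_{jk})$ on a compact polydisc. For $j\neq k$ the exponents $n_jn_k$ take both signs, and $|q_{jk}|=e^{-2\pi \operatorname{Im}Z_{jk}}$ is not confined to a compact subset of $\C^\times$: on a Siegel set, $\operatorname{Im}Z_{jk}$ can be of size comparable to $\operatorname{Im}Z_{jj}\to\infty$, since Minkowski reduction only gives $|y_{jk}|\le y_{jj}/2$. Convergence comes only from domination by the diagonal terms. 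Turning that into $\R_{an,\exp}$-definability is precisely the delicate content of Peterzil--Starchenko, so you should cite it rather than re-derive it.

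Your optional Zarhin-trick variant also needs care. The image of $H_1(A_s;\Z)$ in $\Z^{2M}$ under a discontinuous trivialisation changes from one definable piece to another. So the projection $U(1)^M\to U(1)^{2g}$ has to be chosen piecewise, not fixed once and for all.
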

\begin{proof}Let~$S'\to S$ be a morphism of algebraic varieties, and let~$\alpha':A':=A\times_S S'\to S'$ be the abelian $S'$-scheme obtained by base change. 

We observe that~$\tau:A(\C)\to U(1)^{2g}$ is a definable trivialisation of~$A\to S$, then~$A'\to A\to U(1)^{2g}$ is a trivialisation of~$A'\to S'$. 

If~$S'\to S$ is surjective, then, by definable choice, there exists a definable section~$\sigma:S(\C)\to S'(\C)$ of~$S'(\C)\to S(\C)$. We note that~$A_s=A'_{\sigma(s)}$ by definition. If~$\tau':A'(\C)\to U(1)^{2g}$ is a trivialisation of~$A'\to S'$, then the map~$\tau:A(\C)\to U(1)^{2g}$ defined by~$\forall s\in S, \forall a\in A_s(\C), \tau(a)=\tau'(a)$ is a definable trivialisation of~$A\to S$.

Let~$S'\to S$ be an étale cover such that~$A'\to S'$ admits a polarisation and a full level structure of level~$N=3$.
Then there is a moduli map~$m:S'\to \mathbf{A}_{g,\psi}(N)$ such that~$A'\to S'$, the pullback of~$A$ by~$S'\to S$, is also the pullback of the universal abelian scheme~$\mathcal{A}\to \mathbf{A}_{g,\psi}(N)$. By the observation above, it is enough to treat the case where~$A\to S$ is~$\mathcal{A}\to \mathbf{A}_{g,\psi}(N)$.

The Theorem is then~\cite[Theorem~1.2]{PS}.
\end{proof}

We will prove the following.
\begin{theorem}\label{thm:locus}
Let~$A\to S$ and~$\tau:A\to U(1)^{2g}$ be as in Theorem~\ref{thm:PS}.
Let~$s_n$ be a generic sequence in~$S$ and~$B_n\leq A_n:=A_{s_n}$ be abelian subvarieties such that~$H:=\tau_{s_n}(B_n)\leq  U(1)^{2g}$ does not depend on~$n$. Assume~\eqref{eq:hypmonodromy}.

Then, possibly after extracting an infinite subsequence, the Zariski closure~$\ol{\bigcup B_n}^{Zar}$ is an abelian subscheme~$B\hookrightarrow A\to S$.
\end{theorem}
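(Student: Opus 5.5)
We first pass to a finite étale cover $S'\to S$ on which $A$ carries a polarisation and full level-$3$ structure. Zariski closures and abelian subschemes descend along such covers up to taking images, so it suffices to treat this case. Hypothesis~\eqref{eq:hypmonodromy} says that $\pi_1(S(\C))$ acts trivially on $\End(A_s)$. Consequently every endomorphism of a fibre extends to a global endomorphism of $A\to S$, and every abelian subvariety of a fixed fibre $A_s$ spreads out to an abelian subscheme of $A$: take the image of a suitable idempotent-type endomorphism $f\in\End(A/S)\otimes\Q$, made integral by clearing denominators.

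The key point is to relate the fixed real subtorus $H\le U(1)^{2g}$ to endomorphisms. Via $\tau_{s}$, the lattice $H_1(A_s,\Z)$ is identified with $\Z^{2g}$, compatibly with parallel transport: the trivialisation of \cite{PS} is built from a definable choice of period basis over a fundamental domain. Thus $H$ corresponds to a fixed saturated sublattice $\Lambda_H\subseteq\Z^{2g}$. For each $n$, $B_n$ is an abelian subvariety of $A_{s_n}$ whose homology is $\Lambda_H$ transported to $s_n$. Since a complex subtorus has homology that is a Hodge substructure, the set
\[
S_H:=\{s\in S\mid \Lambda_H\otimes\Q \text{ is a sub-Hodge structure of } H_1(A_s,\Q)\}
\]
is the locus where the corresponding sub-abelian variety exists. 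This set is a finite union of images of Hodge loci for a fixed class, namely the projector onto $\Lambda_H$, transported by the finitely many branches allowed by the level structure. By Cattani--Deligne--Kaplan, or more elementarily since these are loci where an endomorphism of the polarised VHS is of Hodge type, $S_H$ is a finite union of algebraic subvarieties $S_1,\dots,S_r$. Over each $S_j$, the subtori with lattice $\Lambda_H$ glue to an abelian subscheme $B^{(j)}\le A|_{S_j}$, because a flat family of sub-Hodge structures of $H_1$ defines an abelian subscheme.

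Next we extract a subsequence so that all $s_n$ lie in one $S_j$, chosen so that the $s_n$ are Zariski dense in $S_j$. Then $\bigcup B_n\subseteq B^{(j)}$, and $\ol{\bigcup B_n}^{Zar}$ contains the restriction of $B^{(j)}$ over the Zariski closure of $\{s_n\}$. The fibres of $B^{(j)}$ are irreducible and each $B_n$ is a full fibre, so this closure equals $B^{(j)}|_{\ol{\{s_n\}}}$. The word ``generic'' for the sequence $s_n$ should mean that no subsequence lies in a proper subvariety of $S$. This forces $S_j=S$, so the closure is an abelian subscheme $B\hookrightarrow A\to S$. The monodromy hypothesis ensures that the locus is all of $S$ consistently, because the endomorphism defining $\Lambda_H$ is monodromy-invariant and hence global.

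The main obstacle is the compatibility of $\tau_{s_n}$ with parallel transport. The definable trivialisation of Theorem~\ref{thm:PS} comes from a choice of period basis over a definable fundamental set, so different $s_n$ may see $H$ through different monodromy elements. This is exactly where hypothesis~\eqref{eq:hypmonodromy} should enter. First I would show that the relevant projector lies in $\End(A_{s_n})\otimes\Q$ and is therefore monodromy-fixed. Then I would deduce that the transported lattice is well defined independently of the branch, which yields the global abelian subscheme above.
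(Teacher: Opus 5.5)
There is a genuine gap, and it is the one you flag yourself: you never control how $\tau_{s_n}$ relates to parallel transport, and neither of your remedies works.

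First, finiteness of the ``branches'' does not come from the level structure. The monodromy of $A\to S$ lies in $\Gamma(3)$ but is in general infinite. So your set $S_H$, defined pointwise through $\tau_s$, is a priori only contained in a countable union $\bigcup_{\gamma}\mathrm{HL}(\gamma\Lambda_H)$ of Hodge loci. Genericity of $(s_n)$ is perfectly compatible with each $s_n$ lying in a different proper subvariety (think of a sequence of CM points). Hence the step ``extract a subsequence in one $S_j$, then genericity forces $S_j=S$'' has nothing to work with. For the same reason, $\tau$ is only piecewise continuous, so the subtori $\tau_s^{-1}(H)$ need not vary continuously over $S_j$, and the gluing into $B^{(j)}$ is unjustified.

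Second, your final remedy is circular. Hypothesis \eqref{eq:hypmonodromy} concerns $\End(A_s)$ at one fibre $s$; there, by the theorem of the fixed part, it forces $\End(A_s)$ to be the generic endomorphism ring. It says nothing about $\End(A_{s_n})$, which at special points is typically larger. Saying that the projector onto $B_n$ is monodromy-invariant amounts to saying that it extends to a global endomorphism, which is essentially the conclusion. Likewise, ``every endomorphism of a fibre extends to a global endomorphism'' is false for special fibres.

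The missing idea is o-minimal, and you were close to it when you noted that $\tau$ comes from a definable choice of period basis. As in the paper, definability gives a finite partition $S=S_1\sqcup\dots\sqcup S_k$ into connected, simply connected definable cells, with definable lifts $\widetilde{S_i}\subseteq H_g$, on which $\tau$ is continuous. By the facts recorded in~\ref{para:facts}, on each cell $\tau$ differs from the standard period trivialisation by a \emph{constant} element of $GL(2g,\Z)$. After extracting a subsequence inside one cell, all the lifts $\widetilde{s_n}$ lie in the locus $X'$ attached to a single lattice $\Lambda$. Lemma~\ref{lem:locus} identifies $X'$ with a Shimura subdatum, so the $s_n$ lie in one special subvariety $Z$, and genericity gives $S\subseteq Z$. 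From there your skeleton matches the paper: algebraicity of a Hodge locus (via Cattani--Deligne--Kaplan rather than Lemma~\ref{lem:locus}), followed by genericity.

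Only at that point can \eqref{eq:hypmonodromy} be used legitimately. Suppose the transported projector is a Hodge class on a whole connected component of the preimage of $S$ in $H_g$ containing $\widetilde{S_i}$; this is what the paper extracts from $S\subseteq Z$ when it asserts $\widetilde{S_i}\subseteq X'$. Then each of its branches at the reference fibre $s$ lies in $\End(A_s)\otimes\Q$, on which monodromy acts trivially. So the branches coincide, and the subtori glue to one abelian subscheme rather than to a finite union of them. The paper leaves this use of the hypothesis implicit.
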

\subsubsection{}
We recall some facts while introducing some notations. Let~$\psi:\Z^{2g}\times\Z^{2g}\to \Z$ be a bilinear form such that its~$\Q$-linear extension~$\psi_\Q:\Q^{2g}\times\Q^{2g}\to \Q$ is a nondegenerate symplectic form. We denote~$\Gamma(N):=GSp(\psi_\Q)\cap \ker(GL(2g,\Z)\to GL(2g,\Z/(N))$ the principal congruence subgroups. Let~${H_g}$ be the upper Siegel half-space and~$X:=\pm {H}_g$. An element~$\tau\in{H}_g$ is a symmetric~$g\times g$ matrix~$\tau=\Re(\tau)+i\cdot \Im(\tau):\C^g\to \C^g$ over~$\C$, such that~$\Im(\tau)$ is the matrix of a positive definite quadratic form on~$\R^g$. To~$\tau\in \mathcal{H}_g$ we attach the morphism of abelian groups:
\begin{equation}\label{eq:iota}
\iota_\tau:(I_g,\tau):\Z^{2g}\to \C^g,
\end{equation}
where~$I_g$ is the injection~$\Z^{g}\leq \C^g$.
Then~$\Lambda_\tau:=\iota_\tau(\Z^{2g})$ is a lattice of~$\C^g$ and~$A_\tau:=\C^g/\Lambda_\tau$ is an abelian variety, and the symplectic form~$\Lambda_\tau\times \Lambda_\tau\to 2\pi i\Z$ induced by~$2\pi i\psi$ is a polarisation on~$A_\tau$. The spaces
\[
\mathbf{A}_{g,\psi}(N):=\Gamma(N)\sous{H}_g
\]
are connected components of a Shimura varieties, for the Shimura datum~$(GSp(\psi_\Q),\pm H_g)$, which are moduli spaces of abelian varieties with polarisation of type~$\psi$ 
and level structure of level~$N$, and are smooth for~$N\gg 0$.

Consider the action of~$\C^\times$ on~$\C^g$ by homotheties and consider the structure of~$\C^g$ as a~$\R$-linear vector space. It defines a~$\R$-linear representation~$\C^\times \to GL_\R(\C^g)\simeq GL(2g,\R)$. Consider the isomorphism~$\iota_\tau\tens\R:\Z^{2g}\tens\R\to \C^g$. We deduce conjugated representation
\[
\C^\times \to Aut(\Z^{2g}\tens\R)\simeq GL(2g,\R) 
\]
and this representations factors via some~$h_\tau:\C^\times\to GSp(\psi_\Q)(\R)$.

Choose an~$\R$-Lie groups isomorphism~$K:=U(1)^{2g}\to \R^{2g}/\Z^{2g}$. We can then identify~$\Z^{2g}\simeq H^{1}(K;\Z)$, and isomorphisms~$K\simeq A_\tau$ of~$\R$-Lie groups corresponds to isomorphisms~$\Z^{2g}\simeq H_1(A_\tau;\Z)\simeq \Lambda_\tau$.  Let~$A_{H_g}:=\sqcup_\tau A_\tau\to H_g$ be the structural morphism of the holomorphic family of complex tori~$(A_\tau)_{\tau\in H_g}$. There is a natural quotient map~$\C^g\times H_g\to A_{H_g}$ and the topology on~$A_{H_g}$ is the quotient topology. Observe that~$H^1(A_{H_g}/H_g;\Z)\simeq \sqcup_\tau \Lambda_\tau\hookrightarrow \C^g\times H_g\to H_g$ is an étale cover and that~$\bigsqcup_{\tau\in H_g}:\iota_\tau:\Z^{2g}\times H_g\to H^1(A_{H_g}/H_g;\Z)$ is a tivialisation of this étale cover. Note also that~$\lambda:L:=\bigsqcup Isom(\Z^{2g},H_1(A_\tau;\Z))\to H_g$ is also an étale cover, and even a (right)~$GL(2g,\Z)$-torsor over~$H_g$. Let~$D\subseteq {H_g}$ be a connected subset. Every section~$\sigma:D\to L$ of~$\lambda$ over~$D$ defines a family of isomorphisms~$(j_{\sigma}(\tau):K\simeq A_\tau)_{\tau\in D}$. The isomorphisms~$\iota_\tau$ induce a distinguished section~$\sigma_0$.
\paragraph{}\label{para:facts}

We use the following facts:
\begin{itemize}
\item the map 
\[
\bigsqcup_{\tau \in D} j_\sigma(\tau):K\times D\to \bigsqcup_{\tau \in D} A_\tau
\]
is an homeomorphism if and only~$\sigma$ is continuous.
\item if~$D$ is connected and~$\sigma$ is continuous, the map~$\sigma$ is unique up to the action of~$GL(2g,\Z)$. More precisely there exists~$\gamma\in GL(2g,\Z)$ such that~$\forall \tau\in S, j_\sigma(\tau)=j_{\sigma_0}(\tau)\circ\gamma$.
\end{itemize}

We record a last few facts. A connected~$\R$-Lie subgroup has Lie subalgebra~$\mathfrak{b}=H_1(B;\Z)\tens\R$, and that~$B$ is an abelian subvariety if and only if~$\mathfrak{b}$ is stable under the action of~$\C^\times$. When this is the case, the symplectic form form induced by~$\psi_\Q$ on~$H_1(B;\Z)\tens\Q$ is  nondegenerate. Finally, the stabiliser in~$GSp(\psi)_\Q$ of a non degenerate subspace are all Levi subgroups, and are in particular reductive groups. 

\subsubsection{}

Before proving Theorem~\ref{thm:locus}, let us first establish the following.
\begin{lemma}\label{lem:locus}
Let~$H_g$ be Siegel upper half-space and~$\iota_\tau$ be as in~\eqref{eq:iota}. Let~$\Lambda\leq \Z^{2g}$ be a subgroup. Let~$X'$ be the set of~$\tau\in H_g$ such that~$\iota_\tau(\Lambda)\tens\R\leq \C^g$ is a~$\C$-linear subspace.

Assume that~$X'\neq\emptyset$.
Then there exist a Shimura subdatum~$(L,X)\leq (GSp(2g),\pm H_g)$ such that~$X'=X\cap H_g$.
\end{lemma}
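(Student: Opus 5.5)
The plan is to read the condition as a Hodge-theoretic one and take $L$ to be the stabiliser of $W:=\Lambda\tens\Q$.

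\textbf{Step 1: reformulate the condition.} By construction, $\iota_\tau\tens\R:\R^{2g}\to\C^g$ intertwines the complex structure of $\C^g$ with $h_\tau(i)$. More generally, it intertwines the homotheties by $\C^\times$ with $h_\tau(\C^\times)$ acting on $\R^{2g}$. Hence $\iota_\tau(\Lambda)\tens\R$ is a $\C$-linear subspace if and only if $W_\R:=\Lambda\tens\R$ is stable under $h_\tau(\C^\times)$. This is the same as saying that $h_\tau$ factors through the stabiliser $\mathrm{Stab}(W_\R)$.

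\textbf{Step 2: use a point of $X'$ to show $W$ is non-degenerate.} Fix $\tau_0\in X'$. The set $\iota_{\tau_0}(\Lambda)$ spans a complex subspace, and its closure image in $A_{\tau_0}$ has Lie algebra $W_\R$. So by the facts recalled in \S\ref{para:facts}, the identity component of the closure of the image of $\Lambda$ in $A_{\tau_0}$ is an abelian subvariety. The restriction of $\psi_\Q$ to $W$ is therefore non-degenerate, because the restricted Riemann form is positive definite in the sense $\psi(x,h(i)x)>0$.

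\textbf{Step 3: define $L$ and the candidate $X$.} Set
\[
L:=\mathrm{Stab}_{GSp(\psi_\Q)}(W)=GSp(\psi_\Q)\cap\mathrm{Stab}(W)\cap\mathrm{Stab}(W^{\perp}),
\]
where $W^{\perp}$ is the $\psi_\Q$-orthogonal of $W$. By the recalled facts this is a Levi subgroup, hence reductive; concretely it is a subgroup of $GSp(W)\times GSp(W^{\perp})$ with equal multipliers. Let $X:=\{h\in\pm H_g \mid h \text{ factors through } L_\R\}$. By Step~1, $X'=X\cap H_g$: if $h$ stabilises $W_\R$ and is symplectic, it also stabilises $W_\R^\perp$.

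\textbf{Step 4: show $X$ is a finite union of $L(\R)$-orbits, each a Shimura datum.} This is the main obstacle: $(L,X)$ must be a Shimura subdatum, which requires $X$ to be a union of $L(\R)$-conjugacy classes of morphisms $h$, with the Deligne axioms inherited from $GSp$. Since the axioms are inherited from $(GSp,\pm H_g)$ by any $h$ factoring through $L_\R$, what is needed is transitivity, or at least finiteness, of the orbits. I would establish it directly. An $h$ factoring through $L_\R$ is the same as a pair of polarised Hodge structures of type $\{(-1,0),(0,-1)\}$ on $W_\R$ and on $W_\R^\perp$, with the same weight cocharacter and compatible sign of polarisation. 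The space of such pairs is $(\pm H_{g_1})\times(\pm H_{g_2})$ with matching signs, where $g_1=\dim W/2$ and $g_2=\dim W^\perp/2$. The group $L(\R)$, which contains $Sp(W_\R)\times Sp(W^\perp_\R)$ and elements of multiplier $-1$, acts transitively on it. This identifies $(L,X)$ with the product Siegel datum, embedded in $(GSp(\psi_\Q),\pm H_g)$ via $W\oplus W^\perp=\Q^{2g}$. That embedding is a Shimura subdatum, and the conclusion $X'=X\cap H_g$ follows.
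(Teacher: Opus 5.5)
Your proof is correct. It uses the same group $L$ as the paper: the $GSp$-stabiliser of $W=\Lambda\otimes\Q$, which is reductive because $W$ is $\psi$-non-degenerate. The difference lies in how you establish transitivity.

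\textbf{The paper's route.} It fixes one $\tau\in X'$ and quotes a Clozel--Ullmo proposition to get that $(L,L(\R)\cdot\tau)$ is a Shimura subdatum. It then claims that any other $\tau'\in X'$ lies in $L(\R)^+\cdot\tau$. For this it asserts, without proof, that two $L(\R)^+$-orbits which are symmetric subspaces differ by an element $z$ centralising $L$. Such a $z$ fixes $\tau$ because it centralises $h_\tau(\C^\times)$.

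\textbf{Your route.} You use $L\simeq GSp(W)\times_{\mathbb{G}_m}GSp(W^\perp)$ to describe \emph{all} $h\in\pm H_g$ factoring through $L_\R$. They are exactly pairs of Siegel points on $W_\R$ and $W_\R^\perp$ with the same sign. Transitivity then reduces to two facts: $Sp(2g_j,\R)$ acts transitively on $H_{g_j}$, and $L(\R)$ contains an element of multiplier $-1$. This lets you recognise $(L,X)$ as the standard product Siegel datum.

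\textbf{Comparison.} Your argument is elementary and self-contained. It needs neither the citation nor the unjustified orbit-comparison principle, and it gives the explicit answer $X'\simeq H_{g_1}\times H_{g_2}$. The paper's version is shorter and phrased as if it applied to general reductive subgroups. However, its transitivity step rests entirely on that unproved principle.

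Two small corrections.

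First, you say the Deligne axioms are inherited by any $h$ factoring through $L_\R$. That is not automatic for the axiom excluding $\Q$-simple factors of $L^{ad}$ on which $h$ is trivial. It is harmless here, because each factor $PSp(W)$, $PSp(W^\perp)$ receives a non-trivial projection of $h$, as your identification with the product datum shows.

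Second, in Step~2 the image of $\Lambda$ itself in $A_{\tau_0}$ is zero, since $\Lambda$ lies in the period lattice. What you mean is the image of $W_\R$. In any case the direct argument suffices: if $x\in W_\R$ and $\psi(x,W_\R)=0$, then $\psi(x,h_{\tau_0}(i)x)=0$ because $h_{\tau_0}(i)x\in W_\R$, hence $x=0$.
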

\begin{proof}[Proof of Lemma~\ref{lem:locus}]
Consider~$\tau\in X'$. Let~$\psi:\Z^{2g}\times \Z^{2g}\to \Z$ denote the symplectic form associated with~$GSp(2g)$.
Then~$\C^g/\iota_\tau(\Lambda)\leq \C^g/(\Z^g+\tau\times\Z^g)$ is an abelian subvariety. We deduce that~$\Lambda$ is non-degenerate for~$\psi$. It follows that the stabiliser~$L\leq GSp(2g)_\Q$ of~$\Lambda\tens\Q$ is a Levi subgroup. In particular it is a reductive group. 

Let~$h_\tau:\C^\times\to GSp(2g)$ be the Hodge cocharacter corresponding to~$\tau$. Then~$h(\C^\times)\leq L(\R)^+$.
By~\cite[Prop. 3.2]{CU}, the pair~$(L,L(\R)\cdot \tau)$ is a Shimura subdatum of~$(GSp(2g),\pm H_g)$.

Consider~$\tau'\in X'$. Then~$L(\R)^+\cdot \tau$ and~$L(\R)^+\cdot \tau'$ are two~$L(\R)^+$ orbits which are symmetric subspaces of~$H_g$. This implies that there exists~$z\in Z_{GSp(2g,\R)^+}(L)$ such that
\[
L(\R)^+\cdot \tau'=z\cdot L(\R)^+\cdot \tau=L(\R)^+\cdot z\cdot \tau.
\]
But~$\tau$ is fixed by~$Z_{GSp(2g,\R)^+}(h_\tau(\C^\times))\geq Z_{GSp(2g,\R)^+}(L)$.
Therefore
\[
L(\R)^+\cdot \tau'=L(\R)^+\cdot \tau.
\]
Since~$\tau'\in X'$ is arbitrary, we have~$X'\subseteq L(\R)^+\cdot \tau$. On the other hand, one sees easily that~$X'\subseteq  L(\R)^+\cdot \tau$. The Lemma follows.
\end{proof}
We can now prove~Theorem~\ref{thm:locus}.
\begin{proof}[Proof of Theorem~\ref{thm:locus}]
We claim that there exists a definable partition~$S=S_1\sqcup\ldots\sqcup S_k$ such that
\begin{enumerate}
\item \label{enu1} the maps~$A_{S_i}:=\bigsqcup_{s\in S_i} A_s\to A\to U(1)^{2g}$ are continuous;
\item \label{enu2} there exists a definable~$\wt{S_i}\subseteq H_g$ such that~$\wt{S_i}\to H_g\to S_i$ is an homeomorphism;
\item \label{enu3} each~$S_i$ is connected and simply connected.
\end{enumerate}
\begin{proof}[Proof of the claim] By~\cite[Ch. 9]{VdD} we can find~$S_i$ such that~\eqref{enu1} holds true. After refining the partition, we may assume~\eqref{enu2}. By~\cite[Ch. 8]{VdD}, we may refine the partition in such a way that the~$S_i$ are simplexes, and in particular are connected and simply connected. This proves~\eqref{enu3}. 
\end{proof}
For~$s\in S_i$, corresponding to~$\wt{s}\in \wt{S_i}$, we have an isomorphism~$A_{\wt{s}}=\C^g/\Lambda_{\wt{s}}\to A_s$.
We choose an isomorphism~$K:=U(1)^{2g}\simeq \R^{2g}/\Z^{2g}$. We can identify the Lie algebra of~$U(1)^{2g}$ with~$\R^{2g}$ and~$H^1(K;\Z)$ with~$\Z^{2g}\leq\R^{2g}$. The map~$A_{\wt{s}}\to A_s\to A\to U(1)^{2g}$ induces a Lie algebra isomorphism
\[
\C^g\to \R^{2g}
\]
which induces an isomorphism~$j_{\wt{s}}:\Lambda_\wt{s}$ to~$\Z^{2g}$. We claim that there exists~$\phi\in Aut(U(1)^{2g})$ such that
\[
\forall \wt{s}\in \wt{S}, \iota_\wt{s}\circ\phi={j_{\wt{s}}}^{-1}
\]
\begin{proof}[Proof of the claim] See~\ref{para:facts}.
\end{proof}
Extracting a subsequence, we may assume that, for some~$i\in \{1;\ldots;k\}$, we have
\[
\forall n\in\Z_{\geq0}, s_n\in S_i.
\]
Let~$H$ as in the statement of Theorem~\ref{thm:locus}.
Let~$\Lambda=H^1(H;\Z)\leq \Z^{2g}=H^1(K;\Z)$ be the subgroup corresponding to~$H$.

Let~$X'\subseteq H_g$ be as in Lemma~\ref{lem:locus}. By assumption~$H=\tau_{s_n}(B_n)$ for an abelian 
subvariety~$B_n\leq A_{s_n}$. This implies~$\wt{s_n}\in X'$. Therefore,~$s_n$ is in the image~$Z$ of~$X'$ by~$H_g\to \mathbb{A}_g$. Lemma~\ref{lem:locus} implies that~$Z$ is a special subvariety of~$\mathbb{A}_g$. In particular~$Z\cap S$ Zariski closed in~$S$.

But~$s_n$ is a generic sequence in~$S$. Our extracted subsequence is thus generic in~$S$. This implies that~$\{s_n\}\subseteq  Z\cap S$ is Zariski dense in~$S$. As~$Z$ is closed, we have~$S\subseteq Z$.

We can deduce that~$\wt{S_i}$ is contained in~$X'$. By definition of~$X'$, the image of
\[
H\cap X'\to (\R^{2g}/\Z^{2g})\times X'\to \bigsqcup_{\tau \in X'} \C^{g}/\Lambda_\tau
\]
is an holomorphic family of abelian varieties. We deduce that its image in the universal abelian variety~$\mathcal{A}_g|_Z$ restricted to~$Z$ is an abelian sub-$Z$-scheme. Let~$B\to S$ be the restriction to~$S$ of this abelian subscheme. By construction~$B_n$ is equal to the fibre~$B_{s_n}$.

We deduce that~${\bigsqcup B_{s_n}}$ is the inverse image of the dense subset~$\{s_n\}$ by the map~$B\to S$.
Therefore
\[
\ol{\bigsqcup B_{s_n}}^{Zar}=\ol{B}^{Zar}=B.\qedhere
\]
\end{proof}
\subsection{Proof of Theorem~\ref{thm:weak}} Let~$a_n$,~$G_n$,~$k_n$,~$e$,~$E_n$ be as in the statement  of Theorem~\ref{thm:weak}, and let~$C$ be an irreductible component of the Zariski closure~$\ol{\bigcup E_n}^{Zar}$ in~$A$. Then, extracting a subsequence, we may assume that there exists~$c_n$ is a generic sequence in~$C$ such that~$c_n\in E_n\cap C$ for every~$n$. This implies that for every extraction~$\phi:\Z_{\geq0}\to \Z_{\geq0}$, the variety~$C$ is a component of~$\ol{\bigcup E_{\phi(n)}}^{Zar}$.

For every~$n$, let~$B_n\leq B'_n\leq A_n$ be the biggest abelian subvariety such that~$B'_n+E_n\subseteq \ol{\bigcup E_n}^{Zar}$. Substituting~$G_n$ with~$G_n+B'_n$, we may assume that~$B_n:=G^0_n$ is identical to~$B'_n$.

Let~$a'_n\in (A_n)_{tors}\cap (G_n+L_e\cdot a_n)$ be a torsion point of minimal order. We have
\[
G_n+L_e\cdot a_n=G_n+L_e\cdot a'_n
\]
and without loss of generality we may assume~$a_n=a'_n$.

Let~$\tau:A(\C) \to U(1)^{2g}$ be a definable trivialisation as in Theorem~\ref{thm:PS}. Let us define~$H_n:=\tau(G_n)$ and~$t_n:=\tau(a_n)$ and~$\kappa_n:=\tau(k_n)$. Then, with the notations of~\S\ref{sec:31}, we have
\[
\tau(E_n)=E(H_n,t_n,\kappa_n)
\]
and we can consider
\[
\mu_n:=\mu_{E(H_n,t_n,\kappa_n)}\text{ and }
\nu_n:=\mu_{E(H_n,t_n,0)}.
\]
Extracting a subsequence, we may assume that~$\kappa_n$ has a limit~$\kappa_{\infty}$ in~$U(1)^{2g}$ 
and that the sequence of probability measures~$\nu_n$ has a limit~$\nu_{\infty}$. By Theorem~\ref{thm:equi}
we have
\[
\nu_\infty=\mu_{E(H_\infty,t_\infty,0)}
\]
with~$t_\infty\in U(1)_{tors}^{2g}$ and~$H_\infty\leq U(1)^{2g}$ a real Lie subgroup such that
\begin{equation}\label{eq:truc}
\forall n\gg0, 
E(H_n,t_n,0)\subseteq E(H_\infty,t_\infty,0):= H_\infty +L_e\cdot t_{\infty}.
\end{equation}
Let~$V:=\ol{\bigcup E_n}^{Zar}$. Recall that~$C$ is an irreducible component of~$V$. Let~$a\mapsto s(a)$ denote the structural map~$A\to S$. Let~$V'\subseteq S(\C)\times U(1)^{2g}\times U(1)^{2g}$
be the image of~$V\times U(1)^{2g}$ by
\[
(v,g)\mapsto (s(v),g, \tau(v)-g).
\]
Note that~$V'\subseteq S(\C)\times U(1)^{2g}\times U(1)^{2g}$ is a definable subset. For every~$n\in\Z_{\geq0}$, we deduce from~$E_n\subseteq V(\C)$ that
we have
\begin{equation}\label{eq:ttruc}
\{s_n\}\times \{\kappa_n\}\times E(H_n,t_n,0)\subseteq V'.
\end{equation}
We apply Theorem~\ref{thm:equi0} with~$Y=S(\C)\times U(1)^{2g}$ and~$X=S(\C)\times U(1)^{2g}$ and~$y_n=(s_n,\kappa_n)$ and~$V\subseteq X\times Y$ corresponding to~$V'$ above. Then the assumptions~\eqref{eq:equi1} and~\eqref{eq:equi2} are satisfied by~\eqref{eq:truc} and~\eqref{eq:ttruc}. Note that~$\nu_\infty$ is a finite linear combination of translates of the Haar measure on~$H_\infty$, and thus the assumption~$\dim(A)<\dim(H_\infty)\Rightarrow \nu_\infty(A)=0$ is satisfied.
 
We deduce from~\eqref{eq:equi6} that, for~$n\gg0$, the set~$V'$ contains a non empty subset of~$\{s_n\}\times \{\kappa_n\}\times E(H_\infty,t_\infty,0)$ of dimension~$\dim(H_\infty)$. By construction
\[
V'\cap \left( \{s_n\}\times \{\kappa_n\}\times U(1)^{2g}\right)=\{s_n\}\times \{\kappa_n\}\times \tau((V\cap A_n)-k_n).
\]
Let~$G'_n\leq A_n(\C)$ be the real Lie subgroup such that~$\tau(G_n)=H_\infty$. 
Using the definable bijection
\[
(V\cap A_n)-k_n\simeq V'\cap \left(\{s_n\}\times \{\kappa_n\}\times U(1)^{2g}\right)
\] 
we deduce that~$V\cap A_n$ contains a subset of~$E'_n:=G'_n+L_e\cdot a_n+k_n$ of dimension~$\dim(H_\infty)=\dim(G_n)=\dim(E'_n)$. As in Theorem~\ref{thm:equi0}, we deduce that
\[
V\cap A_n
\]
contains, for the archimedean topology, an non empty open subset of~$E'_n$. As~$V\cap A_n$ is a Zariski closed subset,
we deduce that~$V\cap A_n$ contains a component of~$\ol{E'_n}^{Zar}=\ol{G'_n}^{Zar}+L_e\cdot a_n+k_n$.

Using similar arguments, one proves that~$V\cap A_n$ contains every component of~$\ol{G'_n}^{Zar}+L_e\cdot a_n+k_n$.

By our assumption~$B_n=B_n'$, we have 
\[
B_n=(\ol{G'_n}^{Zar})^0.
\]

We apply Theorem~\ref{thm:locus} to~$H=H^+_\infty$. Then, after possibly extracting a subsequence, we deduce that
\[
\ol{\bigcup B_n}
\]
is an abelian subscheme~$B$ of~$A\to S$.

Possibly extracting a subsequence, we may assume that~$c_n\in C$ does not belong to the intersection of~$C$ with the union of the other irreducible components of~$V$. Thus the inclusion~$c_n+B_n\subseteq V$ and the irreducibility of~$c_n+B_n$ imply that~$c_n+B_n\subseteq C$.

By Proposition~\ref{prop:truc} below, this proves~$C+B=C$.

\begin{proposition}\label{prop:truc} Let~$C$ be a subvariety of an abelian scheme~$A\to S$ and let~$B\leq A$ be an abelian subscheme. Let~$s_n$ and~$c_n$ be a generic sequence in~$S$ and in~$C$ with~$c_n$ in the fibre~$C_n:=C_{s_n}$. Assume 
\[
\forall n\geq 0, c_n+B_{s_n}\subseteq C.
\]
Then
\[
\forall s\in S, \forall c\in C_s, c+B_s\subseteq C_s.
\]
In other terms
\[
C=C+B.
\]
\end{proposition}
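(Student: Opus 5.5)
The plan is to show that the locus where translation by $B$ preserves $C$ is Zariski closed in $C$; the hypothesis then forces this locus to be all of $C$. Consider the fibre product $C\times_S B$ and the morphism
\[
\alpha: C\times_S B\to A,\qquad (c,b)\mapsto c+b,
\]
which is a morphism of $S$-schemes because $A\to S$ is a group scheme and $B\leq A$ is a subgroup scheme. Let $Z:=\alpha^{-1}(C)$, a Zariski closed subset of $C\times_S B$. Define
\[
C':=\{c\in C\mid \{c\}\times B_{s(c)}\subseteq Z\}=\{c\in C\mid c+B_{s(c)}\subseteq C\}.
\]
The first step is to prove that $C'$ is Zariski closed in $C$. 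The projection $p:C\times_S B\to C$ is the base change of $B\to S$, hence proper (abelian schemes are proper) and flat with geometrically irreducible fibres $B_{s(c)}$. The complement $C\smallsetminus C'$ is the image by $p$ of the open set $(C\times_S B)\smallsetminus Z$. Since $p$ is flat of finite type, it is an open map, so this image is open and $C'$ is closed. Alternatively, $C'$ can be written as the intersection over $b$ of the closed sets $\{c\mid c+b\in C\}$, using local sections of $B$ (étale locally on $S$ there are enough sections whose values are dense in the fibres, e.g.\ torsion multisections). I would use the openness of flat morphisms, since it is cleanest.

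The second step uses the hypothesis. For each $n$ we have $c_n\in C'$, and $(c_n)$ is a generic sequence in $C$, i.e.\ every infinite subsequence is Zariski dense in $C$. Hence $C=\ol{\{c_n\}}^{Zar}\subseteq C'$, so $C'=C$. This means that for every $s\in S$ and every $c\in C_s$ we have $c+B_s\subseteq C_s$, which is the first formulation. Since $0\in B_s$, this gives $C\subseteq C+B\subseteq C$, i.e.\ $C=C+B$. Points $s$ with $C_s=\emptyset$ are vacuous.

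The main obstacle is the precise sense of ``generic sequence''. If $C$ is reducible, or if genericity only means that the sequence is not contained in any proper closed subset eventually, the argument still works: a closed set containing every $c_n$ (or all $c_n$ for $n\gg0$) contains their Zariski closure, which is $C$. The remaining care is to check openness of $p$ when $C$ is not reduced or irreducible. Working with reduced structures and applying the argument to each irreducible component of $C$ (each containing a generic subsequence) takes care of this; the genericity of $s_n$ in $S$ is not actually needed.
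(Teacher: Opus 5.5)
Your proof is correct. The second step, that a closed subset of $C$ containing the generic sequence $(c_n)$ is all of $C$, is exactly the paper's. Where you differ is in proving that the locus $C'=\{c\in C\mid c+B_{s(c)}\subseteq C\}$ is closed.

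The paper goes through torsion, essentially along the lines of the ``alternative'' you sketch:
\begin{enumerate}
\item It writes $C'=\bigcap_N C'_N$ with $C'_N=\{c\mid c+B_{s(c)}[N]\subseteq C\}$. This uses Zariski density of $\bigcup_N B_s[N]$ in $B_s$ and closedness of $C$.
\item It passes to an \'etale cover of $S$ over which $B[N]$ splits into sections $\sigma_i$. Then $C'_N$ becomes a finite intersection of translates of $C$ by the $\sigma_i$.
\item Each translate is closed, as the image of a closed set under the proper addition map $A\times_S A\to A$. An intersection of closed sets is closed.
\end{enumerate}

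Your main argument replaces all of this with one observation. You have
\[
C\smallsetminus C'=p\bigl((C\times_S B)\smallsetminus\alpha^{-1}(C)\bigr),
\]
and $p$ is the base change of the smooth morphism $B\to S$. Hence $p$ is flat of finite presentation and therefore (universally) open.

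What your route buys:
\begin{itemize}
\item It is shorter. It avoids torsion points, the density argument, and the \'etale trivialisation of $B[N]$. The paper states that last step somewhat loosely: base change to $B[N]$ alone only supplies one tautological section, not a full splitting.
\item It works verbatim for any flat finitely presented subgroup scheme $B$. Properness plays no role, so you can drop it from your list of properties of $p$.
\end{itemize}
The price is invoking the openness theorem for flat morphisms (EGA IV, 2.4.6). You also need the routine check that, for varieties over $\C$, the closed points of the open image are exactly the images of closed points. The paper's route stays within closedness of images under proper maps.

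Your remark that genericity of $(s_n)$ in $S$ is never used is also right. The paper's proof likewise only uses genericity of $(c_n)$ in $C$.
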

\begin{proof}
Let~$a:A\to S$ denote the structural map. It is enough to prove that the set
\[
\Sigma:=\{c\in C| c+B_{a(c)}\subseteq C\}
\]
describes a closed subvariety of~$C$: from~$\{c_n\}\subseteq \Sigma$, we conclude~$C=\ol{\{c_n\}}^{Zar}\subseteq \ol{\Sigma}^{Zar}\subseteq \ol{C}^{Zar}=C$.

For~$N\in \Z_{\geq0}$, let
\[
\Sigma_N:=\{c\in C| c+B_{a(c)}[N]\subseteq C\}.
\]
Recall that~$\bigcup_{N\in\Z_{\geq1}}B_{a(c)}[N]$ is dense in~$B_{a(c)}$. We deduce that for every~$c\in C$,
\[
\forall N\in\Z_{\geq0}, c+B_{a(c)}[N]\subseteq C\Rightarrow c+B_{a(c)}\subseteq C.
\]
It follows that~$\Sigma=\bigcap_{N\in\Z_{\geq0}} \Sigma_N$. It is thus enough to prove that~$\Sigma_N$ describes a closed subvariety of~$C$.

Recall that~$S':=B[N]$ is an étale cover of~$S$. Consider the base change~$C',B'\subseteq A'\to S'$ of~$C,B\subseteq A\to S$. Then~$B'[N]$ is a trivial cover of~$S'$: there exists sections~$\sigma_1,\ldots, \sigma_{N^{2\dim(B/S)}}:S'\to B'$ such that~$B'[N]=\bigcup_{1\leq i \leq N^{2\dim(B/S)}} \sigma_i(S')$.

Note that~$\Sigma_N$ is defined in terms of fibers. Let~$\Sigma'_N$ be the analogous set for~$C'$. Then~$\Sigma'_N$ is the pullback of~$\Sigma_N$, and it is enough to prove that~$\Sigma'_N$ describes a closed subvariety of~$C'$. 

Let~$i\in\{1;\ldots;N^{2\dim(B/S)}\}$ be arbitrary.
Note that the fibrewise sum~$C'+\sigma_i(S')$ is the image of the fibre product~$C'\times_{S'}\sigma_i(S')$ by the regular addition map~$A'\times_{S'}A'\to A'$. 
Recall that latter is a proper map, and that~$C'$ and~$S'$ are closed subvarieties of~$A'$. It follows that~$C'\times_{S'}\sigma_i(S')$ and~$C'+\sigma_i(S')$ are closed subvarieties of~$A'\times_{S'}A'$ and~$A'$.

One can check fibrewise that
\[
\Sigma'_N=\bigcap_{1\leq i \leq N^{2\dim(B/S)}}  C'+\sigma_i(S').
\]
This is an intersection of closed subvariety, and thus is itself a closed subvariety. The statement follows.\end{proof}

We have proved Proposition~\ref{prop:truc}. 

In order to prove Theorem~\ref{thm:weak}, it remains to prove
\[
\# E_n/B_{s_n}\leq O(1)\text{ as }n\to +\infty.
\]
Let~$c$ be the number of connected components of~$H_\infty$. We have
\begin{multline}
\# \left(E_n+B_{s_n}\right)/B_{s_n}
=\#\left(\tau(E_n)+\tau(B_{s_n})\right)/\tau(B_{s_n})\\
= \#\left(E(H_n,\tau_n,\kappa_n)+H^+_{\infty}\right)/H^+_{\infty}\\
\leq\#H_\infty/H_\infty^+\cdot \left(\#E(H_n,\tau_n,\kappa_n)+H_{\infty}\right)/H_{\infty}\\
=c\cdot \#E(H_\infty,t_\infty,\kappa_n)/H_{\infty}\\\leq c\cdot ord(t_\infty+H_{\infty}/H_{\infty}).
\end{multline}
This finishes the proof of Theorem~\ref{thm:weak}.

\section{Galois actions associated with a hybrid orbit of abelian varieties}\label{sec:Galois}
The main results of this section are Theorem~\ref{thm:Serre hybrid} and Theorem~\ref{thm:uniform Kummer1}.
Of independent interest is the reformulation Theorem~\ref{thm:uniform Faltings} of Falting's results
on Tate isogeny conjecture.

The strategy of this article to prove the main theorems is to find subset of Galois
orbits which are amenable to equidistribution results. We use two cases.
\begin{itemize}
\item The method of~\cite{RWeyl} applies to torsion points and uses Serre's Theorem~\ref{thm:Serre eAK}
on Lang's conjecture. 
\item The Galois action on division points of rational points which are not torsion points is the subject of Kummer theory for abelian varieties.
\end{itemize}
In both case we need a uniform results as our abelian variety varies in a hybrid orbit.
For Serre's theorem, this is Theorem~\ref{thm:Serre hybrid}, which relies on~\cite[App. B]{RY:IHES}'s approach to Serre's theorem. For Kummer theory, we follow~\cite{Hindry}, adding uniformity with respect to passing to a finite extension~$E/K$
and the maximal abelian extension~$E^{ab}/K$.

\subsection{Tate module}
Let~$A$ be an abelian variety over an extension~$K/\Q$ and let~$\ol{K}$ be an algebraic closure of~$K$. Denote by~$A_{tors}\leq A(\ol{K})$ the subgroup of torsion points.
The~$\wh{\Z}$-Tate module of~$A$ is then
\[
\wh{T}(A):=\Hom(\Q/\Z,A_{tors})\approx\wh{\Z}^{2\dim(A)}.
\]
The action of~$\Gal(\ol{K}/K)$ on~$A_{tors}$ induces a~$\wh{\Z}$-linear representation
\[
\rho_A:\Gal(\ol{K}/K)\to \Aut(\wh{T}(A))\approx GL(2\dim(A),\wh{\Z}).
\]
\subsubsection{Several theorems}We rely on the following.
\begin{theorem}[Mordell-Weil-Néron, {\cite[p. 52 Remark]{Serre:MWT}}, \cite{LN}]\label{thm:MWN}
Let~$A$  be an abelian variety over a finitely generated extension~$K$ over~$\Q$ or~$\F_p$.
Then~$A(K)$ is a finitely generated abelian group.
\end{theorem}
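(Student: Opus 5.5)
The plan is to argue by induction on the transcendence degree of~$K$ over its prime field. There are two base cases. When~$K$ is a finite field,~$A(K)$ is finite. When~$K$ is a number field, the statement is the classical Mordell--Weil theorem, and I would prove it in the usual two steps.

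The first step is the weak Mordell--Weil theorem: $A(K)/m A(K)$ is finite for some $m\geq 2$. Use the Kummer sequence
\[
0\to A(K)/mA(K)\to H^1(\Gal(\ol{K}/K),A[m]).
\]
After enlarging~$K$ so that~$A[m]\subseteq A(K)$, observe that the image consists of classes unramified outside the finite set of primes of bad reduction and those dividing~$m$. The Hermite--Minkowski finiteness of extensions of bounded degree unramified outside a finite set then gives finiteness. To come back down to the original field, use that $\ker\bigl(A(K)/mA(K)\to A(K')/mA(K')\bigr)$ is controlled by $H^1(\Gal(K'/K),A[m])$, which is finite.

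The second step is descent via the Néron--Tate canonical height~$\hat{h}$ attached to a symmetric ample line bundle. It is a quadratic form with $\hat h(mP)=m^2\hat h(P)$, and it has the Northcott property over a number field. Let $R$ be a set of representatives of $A(K)/mA(K)$, and let $c=\max_{r\in R}\hat h(r)$. Then $A(K)$ is generated by $R$ together with the finite set $\{\hat h\leq c\}$.

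For the inductive step, let $K$ be finitely generated of positive transcendence degree. Choose a subfield $k\subseteq K$, finitely generated over the prime field and algebraically closed in~$K$, with $\mathrm{trdeg}(K/k)=1$; thus $K$ is the function field of a curve over~$k$. Let $(B,\tau)$ be the $K/k$-trace (Chow trace) of $A$: $B$ is an abelian variety over $k$ and $\tau:B_K\to A$ has finite kernel. By induction $B(k)$ is finitely generated, so $\tau(B(k))\leq A(K)$ is finitely generated as well. It therefore suffices to prove the Lang--Néron theorem, namely that $A(K)/\tau(B(k))$ is finitely generated. Since extensions of finitely generated groups are finitely generated, this gives the claim.

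The Lang--Néron step is the main obstacle, and I would run the same two-step descent over the function field. Weak Mordell--Weil works as before: cohomology classes unramified outside a finite set of places of the curve, with finiteness now coming from finiteness of tame covers of the curve with bounded ramification, together with control of the geometric part. The genuinely new point is the height. The geometric Néron--Tate height on $A(K)$ relative to the curve vanishes exactly on the image of the trace, up to torsion. Moreover, it is a positive definite quadratic form on $A(K)/\tau(B(k))\otimes\R$ whose values on lattice points of bounded height form a finite set modulo the trace. Proving this positivity/Northcott statement modulo the trace is where the real work lies. I would follow Lang's treatment in \emph{Fundamentals of Diophantine Geometry}, Ch.~6, which identifies the kernel of the height with the trace using the theory of the Picard and Albanese varieties of the curve. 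Granting it, the descent argument from the number field case goes through verbatim on the quotient $A(K)/\tau(B(k))$.
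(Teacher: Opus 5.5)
The paper gives no proof of this statement; it cites Serre and Lang--Néron. The classical argument behind those references is the architecture you outline: induction on transcendence degree through the $K/k$-trace and the Lang--Néron theorem. Your number-field base case and the height descent are fine.

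The gap is in the weak Mordell--Weil step over $K=k(C)$ when the constant field $k$ is infinite, e.g.\ a number field, or of positive transcendence degree. Classes in $H^1(\Gal(\ol{K}/K),A[m])$ unramified outside a finite set $S$ of places of the curve do not form a finite group. For every finite extension $k'/k$, the constant extension $k'K/K$ is unramified at every place of $C$, and $k$ has infinitely many cyclic extensions of degree $m$. Finiteness of tame covers with bounded ramification is a statement about the curve over the separable closure $k_s$. It says nothing about this arithmetic part, and ``control of the geometric part'' does not address it. So you do not get finiteness of $A(K)/(mA(K)+\tau(B(k)))$, and the descent on $A(K)/\tau(B(k))$ cannot be run directly over $k$ as you propose.

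The missing idea is to run your two-step descent over $k_s$, with $m$ prime to the characteristic. There it works: finitely many tame covers, and Northcott modulo $\tau(B(k_s))$, so $A(k_sK)/\tau(B(k_s))$ is finitely generated. Then descend by showing that
\[
A(K)/\tau(B(k))\to A(k_sK)/\tau(B(k_s))
\]
is injective. This uses three facts:
\begin{enumerate}
\item $k$ is algebraically closed in $K$, so $\Gal(k_sK/K)\simeq\Gal(k_s/k)$.
\item The $K/k$-trace is compatible with the separable extension $k_s/k$.
\item $\tau$ is injective on geometric points: $\ker\tau$ is trivial in characteristic $0$ and infinitesimal in characteristic $p$.
\end{enumerate}
Then if $b\in B(k_s)$ and $\tau(b)\in A(K)$, $b$ is Galois invariant, so $b\in B(k)$. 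Hence $A(K)/\tau(B(k))$ is a subgroup of a finitely generated group.

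Alternatively, you can bound the arithmetic ramification by spreading out over a scheme of finite type over $\Z$ and using Hermite--Minkowski-type finiteness, as in Lang's unramified class field theory.

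Relatedly, in characteristic $p$, taking $k$ algebraically closed in $K$ does not make $K/k$ separable. Choose $k$ via a separating transcendence basis so that $K/k$ is regular; the trace theory and the Galois argument above need this.
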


We observe that~$GL(1,\wh{\Z})$ acts naturally on any~$\wh{\Z}$-module, in particular torsion and any profinite abelian groups: $A_{tors}$, $A[N]$, $\wh{T}(A)$, \textit{etc}.

\begin{theorem}[Serre (see~{\cite[33, p.\,34, Th. 2 and 2']{Serre:O4}}, cf.~{\cite[Lemme 12]{Hindry}})]\label{thm:Serre eAK}
Let~$A\neq0$  be an abelian variety over a finitely generated extension~$K/\Q$. 
There exists~$e=e(A,K)\in\Z_{\geq1}$ such that
\begin{equation}
\label{eq:Serre e}
\forall \lambda\in GL(1,\wh{\Z}), \lambda^e\in \rho_A(Gal(\ol{K}/K)).
\end{equation}
\end{theorem}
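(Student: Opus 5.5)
Over a number field this is Serre's theorem on homotheties, so the plan is to reduce the finitely generated case to the number field case by specialisation, and to prove the number field case along Serre's lines.

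\emph{Reduction to number fields.} Let $K/\Q$ be finitely generated. Spread $A$ out to an abelian scheme $\mathcal{A}\to U$ over a smooth irreducible $\Q$-variety $U$ with function field $K$. For any closed point $u\in U$ with residue field a number field $k(u)$, the specialisation map identifies $\wh{T}(A)$ with $\wh{T}(\mathcal{A}_u)$. Under this identification, the image of a decomposition group $\Gal(\ol{k(u)}/k(u))$ lands inside $\rho_A(\Gal(\ol K/K))$. Indeed, both representations factor through $\pi_1^{et}(U)$, and the map $\Gal(\ol K/K)\to\pi_1(U)$ is surjective. Homotheties are compatible with this identification. Hence if \eqref{eq:Serre e} holds for $\mathcal{A}_u$ over $k(u)$ with exponent $e$, it holds for $A$ over $K$ with the same $e$. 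So it suffices to treat a number field $K$.

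\emph{Number field case.} Let $G_\ell$ be the Zariski closure of $\rho_{A,\ell}(\Gal)$ in $GL(T_\ell A\otimes\Q_\ell)$. Bogomolov's theorem gives that $\rho_{A,\ell}(\Gal)$ contains an open subgroup of $\Z_\ell^\times$ in the homotheties. This yields the statement with $e$ depending on $\ell$, and so handles finitely many primes. The real content is uniformity in $\ell$: for $\ell\gg0$ we need $\rho_{A,\ell}(\Gal)\supseteq(\Z_\ell^{\times})^{e}$ with $e$ independent of $\ell$.

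For the large primes I would follow Serre's argument via Frobenius tori and the weight cocharacter. By Faltings (semisimplicity) and Serre--Wintenberger, for $\ell\gg0$ the mod-$\ell$ image contains the image of a torus containing the homotheties up to bounded index. More concretely, one uses Raynaud's theorem on the inertia tame characters at primes above $\ell$. These characters are products of fundamental characters with exponents in $\{0,1\}$ (by Raynaud, since $A$ has semistable reduction after a bounded extension). Their determinant-type combination, composed with the weight, shows that inertia at $\ell$ acts on $A[\ell]$ through a subgroup containing $(\F_\ell^\times)^{e_0}\cdot\mathrm{Id}$, where $e_0$ depends only on $g$ and $[K:\Q]$. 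One then lifts from $\F_\ell^\times$ to $\Z_\ell^\times$. For $\ell$ large, the pro-$\ell$ part $1+\ell\Z_\ell$ is contained in the image, since the image of inertia is open in the homotheties. Finally, since the $\ell$-adic images for different $\ell$ are independent enough after a finite extension (Serre's independence), the product over $\ell$ yields the statement in $\wh{\Z}^\times$.

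The main obstacle is this uniform control for large $\ell$. In particular, passing from containment of homotheties in each $\ell$-adic image to containment of $\lambda^e$ in the full adelic image $\rho_A(\Gal)\subseteq\prod_\ell GL(T_\ell)$ requires an independence-of-$\ell$ argument, and that is where I expect the difficulty. Since this is a classical theorem, in the paper I would simply cite Serre and Wintenberger for these steps. I would present in detail only the specialisation reduction, which is the part needed for finitely generated $K$.
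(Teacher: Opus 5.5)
Your specialisation step is correct. $\wh{T}(\mathcal{A})$ is a lisse sheaf on $U$, and $\Gal(\ol K/K)\to\pi_1^{et}(U)$ is surjective. The image of $\Gal(\ol{k(u)}/k(u))$ in $\Aut(\wh T)$ therefore lies in the image of $\pi_1^{et}(U)$; this holds up to conjugation, which is harmless because homotheties are central. So the number-field case gives the finitely generated case with the same $e$. The paper gives no argument of its own and simply cites Serre (Th.~2 and~2') and Hindry. Your plan of citing Serre for number fields and adding this reduction is therefore essentially the paper's approach.

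Do not keep your large-$\ell$ sketch as a justification, because it is wrong as stated. The image of inertia at $\ell$ need not contain any nontrivial homothety. For an elliptic curve with good ordinary reduction at $v\mid\ell$, inertia acts on $T_\ell$ through matrices of the form $\begin{pmatrix}\chi & *\\ 0 & 1\end{pmatrix}$, whose only homothety is the identity. Hence neither ``inertia acts through a subgroup containing $(\F_\ell^\times)^{e_0}\cdot\mathrm{Id}$'' nor ``the image of inertia is open in the homotheties'' holds. Your lifting of $1+\ell\Z_\ell$ into the image collapses with these claims.

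In a correct argument, the homotheties appear only after combining the inertia images at several places above $\ell$, or their conjugates under the whole image. This uses global input such as Faltings' semisimplicity and commutant theorem; a single inertia group is not enough. The passage from the $\ell$-adic statements to $\wh{\Z}^\times$ also needs an independence input, for instance Serre's independence theorem over a finite extension $K'/K$. That costs only a factor $[K':K]$ in $e$, since $\sigma^{[K':K]}\in\Gal(\ol K/K')$ when $K'/K$ is Galois.
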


The following is a reformulation of~\cite[Definition 2.1]{RY:IHES}.
\begin{theorem}[Uniform integral Tate conjecture, Faltings]\label{thm:uniform Faltings}
Let~$A$  be an abelian variety over a finitely generated extension~$K/\Q$ such that~$\End_A:=\End(A/K)=\End(A/\ol{K})$.

Let~$Z$ be the commutant of~$\End(A)$ in~$\End_{\wh{\Z}}(\wh{T}(A))$. Then for every~$D\geq 1$, there exists~$F(A,K,D)$ such that for every extension~$E/K$ of degree~$[E:K]\leq D$,
\begin{equation}\label{eq:TateUnifLinear}
\left[Z:\wh{\Z}\left[\rho_A(Gal(\ol{E}/E))\right]\right]\leq F(A,K,D)<+\infty.
\end{equation}
\end{theorem}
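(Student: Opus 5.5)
The plan is to deduce the statement from Faltings' theorem (the Tate conjecture, semisimplicity and the endomorphism theorem over finitely generated fields), combined with a compactness argument over all primes at once. I would first work at a single prime $\ell$, then bound the index uniformly in $\ell$, and finally control the dependence on $E$.

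\emph{Step 1: each $\ell$ separately.} Write $\wh{T}(A)=\prod_\ell T_\ell(A)$. Then $Z=\prod_\ell Z_\ell$, where $Z_\ell$ is the commutant of $\End(A)\tens\Z_\ell$ in $\End_{\Z_\ell}(T_\ell A)$, and $\wh{\Z}[\rho_A(Gal(\ol{E}/E))]$ is the closure of the product of the $\Z_\ell$-algebras $R_{\ell,E}:=\Z_\ell[\rho_{A,\ell}(Gal(\ol{E}/E))]$. Faltings' theorem over $E$ gives $\End(A/E)\tens\Z_\ell=\End_{Gal}(T_\ell A)$, and the hypothesis gives $\End(A/E)=\End_A$. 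Since $V_\ell A$ is a semisimple $Gal(\ol{E}/E)$-module, the double commutant theorem gives $R_{\ell,E}\tens\Q_\ell=Z_\ell\tens\Q_\ell$. So the index $[Z_\ell:R_{\ell,E}]$ is finite for each $\ell$.

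\emph{Step 2: uniformity in $\ell$.} Here I would use the integral form of Faltings' theorem, i.e.\ the finiteness of isomorphism classes of abelian varieties isogenous to $A$ over $E$ (Zarhin's trick gives this in the form needed). This implies that for $\ell\gg0$ the $\F_\ell$-representation $A[\ell]$ is semisimple with $\End_{Gal}(A[\ell])=\End_A\tens\F_\ell$. Jacobson density then gives $\F_\ell[\rho(Gal)]=$ the commutant mod $\ell$. By Nakayama this lifts to $R_{\ell,E}=Z_\ell$ for almost all $\ell$. For the finitely many remaining $\ell$, Step 1 gives a finite index. Hence $[Z:\wh{\Z}[\rho_A(Gal(\ol{E}/E))]]<\infty$ for each fixed $E$. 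This is exactly the content of \cite[Definition 2.1]{RY:IHES} in the integral setting, so I would cite it there.

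\emph{Step 3: uniformity over $[E:K]\leq D$.} This is the main obstacle, because there are infinitely many such $E$. I would handle it through open subgroups. The image $U_E=\rho_A(Gal(\ol{E}/E))$ is an open subgroup of $U_K=\rho_A(Gal(\ol{K}/K))$ of index at most $D$. Since $U_K$ is a closed subgroup of $GL(2g,\wh{\Z})$ with finite-index image considerations, its abelianisation-type quotients are finitely generated. More precisely, every open subgroup of index at most $D$ contains the intersection $U_D$ of all open subgroups of index at most $D$, via the action on cosets and the kernel of the map to $S_D$. The key point is that $U_D$ is still open. This holds because $U_K$ is topologically finitely generated: it is a compact $\ell$-adic analytic group at each prime, and it has finitely many open subgroups of bounded index. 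Granting this, $U_D$ is the image of $Gal(\ol{E_D}/E_D)$ for some finite extension $E_D/K$. Then
\[
\wh{\Z}[U_E]\supseteq\wh{\Z}[U_D],
\]
so the index of $\wh{\Z}[U_E]$ in $Z$ is bounded by the single number $F(A,K,D):=[Z:\wh{\Z}[U_D]]$. This number is finite by Step 2 applied to $E_D$, since $\End(A/E_D)=\End_A$.

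The delicate point in Step 3 is the openness of $U_D$ in the adelic product. It needs the finiteness of open subgroups of bounded index in $U_K$ uniformly in $\ell$. To get this, I would use the fact that for $\ell\gg0$ the image mod $\ell$ has bounded-index normal subgroups controlled by Nori/Serre theory: the $\ell$-Sylow parts and the semisimple parts have no subgroups of small index once $\ell>D$. If that turns out to be awkward, I would instead bypass it by working linearly. The condition $\wh{\Z}[U_E]\supseteq \wh{\Z}[\{u^{D!}:u\in U_K\}]$ holds because $u^{[U_K:U_E]!}\in U_E$ for every $u\in U_K$. It would then remain to show that the algebra generated by $D!$-th powers still has finite index in $Z$, using Step 2's argument applied to the group generated by $D!$-th powers, whose Zariski closure has the same identity component.
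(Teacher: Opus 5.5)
\paragraph{The gap is in Step 3.}
Steps 1 and 2 are fine; Step 2 is essentially the paper's argument, but for a single field. Step 3 fails, and it is where the real content of the theorem lies.

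\emph{$U_D$ is not open.} Your claim that $U_K=\rho_A(\Gal(\ol{K}/K))$ is topologically finitely generated is false for every $A\neq0$ and $D\geq2$. Consequently the intersection $U_D$ of its open subgroups of index $\leq D$ is not open. Indeed $U_K\simeq\Gal(K(A_{\tors})/K)$, and $K(A_{\tors})\supseteq K(\mu_\infty)$ by the Weil pairing. The algebraic closure of $\Q$ in $K$ is a number field. Hence, for all but finitely many primes $\ell$, the fields $K(\sqrt{\ell^\ast})$ with $\ell^\ast=(-1)^{(\ell-1)/2}\ell$ are linearly disjoint quadratic extensions of $K$ inside $K(\mu_\infty)$. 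So $U_K$ surjects onto an infinite product of copies of $\Z/2$. Being $\ell$-adic analytic at each prime does not make the adelic product finitely generated. Your appeal to Nori--Serre misses that these small-index quotients come from the abelian (determinant/homothety) part, at infinitely many primes at once. So $U_D$ has infinite index, is not $\rho_A(\Gal(\ol{K}/E_D))$ for any finite $E_D/K$, and Faltings cannot be applied to it.

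\emph{The fallback has the same defect.} The inclusion $u^{D!}\in U_E$ is correct. But the closed subgroup generated by $D!$-th powers lies in the kernel of the map to $\prod\Z/2$, so it too is the Galois image of an infinite extension. The remark ``same identity component of the Zariski closure'' is a prime-by-prime $\ell$-adic statement. It says nothing about the mod-$\ell$ algebra simultaneously for all large $\ell$.

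\paragraph{What is missing, and how the paper proceeds.}
The missing idea is the right order of quantifiers. The paper writes the index as $\prod_\ell F_\ell(A,E)$ and treats two ranges of primes separately.
\begin{enumerate}
\item For each fixed $\ell$, the $\ell$-adic analytic group $\rho_{A,\ell}(\Gal(\ol{K}/K))$ has only finitely many closed subgroups of index $\leq D$. So $F_\ell(A,E)$ takes only finitely many values as $E$ varies, each finite by your Step 1.
\item For $\ell\geq\ell(A,K,D)$, the paper uses a Faltings-type input that is uniform over all $E$ with $[E:K]\leq D$, taken from \cite[Def. 2.1]{RY:IHES}: $A[\ell]$ is a semisimple $\Gal(\ol{K}/E)$-module whose commutant is $\End(A)\tens\F_\ell$. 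Your density-plus-Nakayama argument then gives $F_\ell(A,E)=1$.
\end{enumerate}
Your Step 2 only produces a threshold $\ell(A,E)$ that depends on $E$.

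Part of the uniform statement is easy. Semisimplicity over $E$ for $\ell>D!$ follows from semisimplicity over $K$, by Clifford's theorem applied to the normal core of the subgroup, followed by averaging. The hard part is that the commutant does not grow when passing to $E$. That is the genuinely uniform input you would need to prove or cite.
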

For~$K=E$ a number field,~\eqref{eq:TateUnifLinear} is~\cite[Théorème 2.7]{Deligne}.
This is also true when~$K$ is a finitely generated extension of~$\Q$.

For every prime~$\ell$, let~$F_\ell(A,E):=\left[Z\tens\Z_\ell:\Z_\ell\left[\rho_A(Gal(\ol{E}/E))\right]\right]$. We have
\[
\left[Z:\wh{\Z}\left[\rho_A(Gal(\ol{E}/E))\right]\right]=\prod_\ell F_\ell(A,E)=F(A,E,1).
\]
We will prove that there exists~$\ell(A,K,D)$ such that for every extension~$E/K$ of degree~$[E:K]\leq D$,
and every prime~$\ell\geq \ell(A,K,D)$, we have
\begin{equation}\label{eq:Tateclaim1}
F_\ell(A,E)=1
\end{equation}
We will also prove that, for every prime~$\ell$,
\begin{equation}\label{eq:Tateclaim2}
\sup_{E/K, [E:K]\leq D}F_\ell(A,E)<+\infty,
\end{equation}
where~$E$ ranges through the extensions of~$K$ of degree at most~$D$. Theorem~\ref{thm:uniform Faltings} follows from~\eqref{eq:Tateclaim1} together with \eqref{eq:Tateclaim2}.

Let us prove~\eqref{eq:Tateclaim2}. 
\begin{proof}{Proof of \eqref{eq:Tateclaim2}}Let~$\rho_{A,\ell}:Gal(\ol{E}/K)\to GL(\widehat{T}(A)\tens\Z_\ell)$ denote the natural representation on the~$\Z_\ell$-linear Tate module. We note that~$F_\ell(A,E)$ only depends on the subgroup~$\rho_{A,\ell}(Gal(\ol{E}/E))\leq \rho_{A,\ell}(Gal(\ol{E}/K))$. But the~$\ell$-adic group~$\rho_{A,\ell}(Gal(\ol{E}/K))$ has only finitely many closed subgroups of index at most~$D$.  Moreover
\begin{multline*}
\left[\rho_{A,\ell}(Gal(\ol{E}/E)): \rho_{A,\ell}(Gal(\ol{E}/K))\right]\leq \left[Gal(\ol{E}/E): Gal(\ol{E}/K)\right]
\\=[E:K]\leq D.
\end{multline*}
Therefore, in~\eqref{eq:Tateclaim2} the~$F_\ell(A,E)$ takes only finitely many values as~$E$ ranges through the extensions of~$K$ of degree at most~$D$. Note also that~$F_\ell(A,E)\leq F(A,E,1)<+\infty$. We deduce the finiteness in \eqref{eq:Tateclaim2}.
\end{proof}

\begin{proof}{Proof of \eqref{eq:Tateclaim1}}
For every prime~$\ell$, let~$\rho_{A[\ell]}$ be the natural~$\F_\ell$-linear representation~$Gal(\ol{K}/K)\to GL_{\wh{\Z}}(T(A))\to GL_{\F_\ell}(A[\ell])$.
If~$\ell \nmid F_\ell(A,K)$, then the image of~$Z$ in~$\End(A[\ell])$ is~$\F_\ell[\rho_{A[\ell]}(Gal(\ol{K}/K))]$.

Consider an embedding~$K\to \C$ and the corresponding~$\Z$-structure~$\Lambda:=H^1(A(\C);\Z)\tens\wh{\Z}\simeq T(A)$ with respect to some embedding~$\ol{K}\to \C$. Then~$R:=\End(A,\ol{K})$ embeds in~$\End_\Z(\Lambda)$ and its commutator~$Z'$ satisfies~$Z'\tens \wh{\Z}\simeq Z$. The algebras~$R\tens\Q$ and~$Z\tens\Q$ are semisimple. For~$\ell>\ell(A)$, the morphisms~$R\tens \F_\ell\to \End(A[\ell])$ and~$Z'\tens\F_\ell\simeq Z\tens\F_\ell\to\End(A[\ell])$ are injective; the images are semisimple~$\F_\ell$-algebras; and the images are the commutator of each other in~$\End(A[\ell])$.

Let~$G_\ell:=\rho_{A[\ell]}(Gal(\ol{K}/E))$.
For~$\ell\geq \ell(A,K,[E:K])$, \cite[Def. 2.1]{RY:IHES} implies that~$A[\ell]$ is a semisimple~$G_\ell$-module, and that~$\End(A)\tens\F_\ell$ is the algebra of~$G_\ell$-equivariant endomorphisms of~$\End(A)\tens\F_\ell$. This implies that~$\F_\ell[\rho_{A[\ell]}(Gal(\ol{K}/K))]=Z\tens\F_\ell=\F_\ell[\rho_{A[\ell]}(Gal(\ol{K}/E))]$. 

Recall that~$\rho_{A,\ell}(Gal(\ol{K}/K))\leq Z\tens\Z_\ell$. By Nakayama's lemma, we conclude~$\Z_\ell[\rho_A(Gal(\ol{K}/E))]=Z\tens\Z_\ell$ for~$\ell\geq \ell(A,K,[E:\Q])$.
\end{proof}
We have proved~\eqref{eq:Tateclaim1} and \eqref{eq:Tateclaim2}.
This concludes the proof of Theorem~\ref{thm:uniform Faltings}.

\subsection{Uniform variant of a result of Serre}
Here is a variant of Theorem~\ref{thm:Serre eAK}, which is uniform on a Hybrid orbit of abelian varieties.
\begin{theorem}[{\cite[loc. cit.]{Serre:O4}} and {\cite[App.~B]{RY:IHES}}]\label{thm:Serre hybrid}
Let~$A$ be an abelian variety over a finitely generated field extension~$K/\Q$ and define~$e(A,K)$ be the smallest~$e\in\Z_{\geq1}$ such that~\eqref{eq:Serre e}.
 Let~$B\leq A$ be an algebraic subgroup defined over~$\ol{K}$,  let~$K\leq K(B)\leq \ol{K}$ the smallest extension over which~$B$ is defined.
Then
\[\label{eq:isog unif}
e(B,K(B))\text{ and }e(A/B,K(B))\text{ divides }e(A,K).
\]

Let~$d\in\Z_{\geq1}$. There exists~$e(A,K,d)\in\Z_{\geq1}$ such that for every abelian variety~$C$ of dimension at most~$d$, with~$CM$ with reflex field~$E$, and defined over the Hilbert class field~$E(C)/E$, and for every algebraic subgroup~$F\leq A\times C$, 
\[
e((A\times C)/F,K\cdot E^\ast(C))\leq e(A,K,d),
\]
where~$Gal(\ol{\Q}/ E^\ast(C))$ is the inverse image of~$GL(1,\wh{\Z})$ by~$Gal(\ol{\Q}/ E(C))\to Aut((A\times C)_{tors})$.
\end{theorem}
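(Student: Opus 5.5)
Part one is functoriality: Galois-equivariant surjections carry homotheties to homotheties. Part two reduces to Serre's theorem for the single variety $A$ together with CM theory for $C$.

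\textbf{Part one.} Let $B\leq A$ be defined over $K(B)$. The inclusion $B\to A$ and the quotient $A\to A/B$ are $\Gal(\ol{K}/K(B))$-equivariant. The induced map $A_{tors}\to (A/B)_{tors}$ is surjective, since $A(\ol K)\to (A/B)(\ol K)$ is surjective and $\ol K$ is algebraically closed.

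Write $e=e(A,K)$ and take $\lambda\in GL(1,\wh{\Z})$. By Theorem~\ref{thm:Serre eAK} there is $\sigma\in\Gal(\ol K/K)$ acting on $A_{tors}$ as $\lambda^e$. This $\sigma$ need not fix $K(B)$, so we compose with the scalar. The element $\lambda^e$ is central and commutes with all $\wh{\Z}$-linear automorphisms, so it preserves every subgroup of $A_{tors}$, in particular $B_{tors}$. I would then argue that some element of $\rho_A(\Gal(\ol K/K(B)))$ also acts as $\lambda^e$. The idea is that $\rho_A(\Gal(\ol K/K))$ stabilising the torsion of $B$ forces $\sigma(B)$ and $B$ to have the same torsion, hence $\sigma(B)=B$.

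If "$K(B)$ is the smallest field of definition" is taken to mean that its Galois group is exactly the stabiliser of $B$, then $\sigma\in\Gal(\ol K/K(B))$ directly. Restricting $\sigma$ to $B_{tors}$ gives $\lambda^e$ there. Pushing it through the quotient gives $\lambda^e$ on $(A/B)_{tors}$. Therefore $e(B,K(B))$ and $e(A/B,K(B))$ divide $e$, because the set of admissible exponents is closed under taking multiples and $e(\cdot)$ is its least element, so the set is $e(\cdot)\Z_{\geq1}$.

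\textbf{Part two.} Let $C$ be a CM abelian variety of dimension at most $d$. By the definition of $E^\ast(C)$, the group $\rho_{A\times C}(\Gal(\ol\Q/E^\ast(C)))$ contains homotheties. We need a uniform exponent such that every $\lambda^{e}$ is realised on $A\times C$ simultaneously over $K\cdot E^\ast(C)$.

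I would use Theorem~\ref{thm:uniform Faltings} and \cite[App.~B]{RY:IHES} to see that $\rho_A(\Gal(\ol K/K))$ contains $\lambda^{e(A,K)}$. On the CM side, Shimura--Taniyama gives that the image of $\Gal(\ol{\Q}/E(C))$ is open in the torus $T_C(\wh\Z)$, of bounded index depending only on $d$ via the class number of $E(C)$.

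The key step is compatibility on the product. The field $K$ and the CM field extension $E(C)$ share only a bounded part over $\Q$: both are abelian-type in their intersection with $\Q^{ab}$, and the homothety character factors through the cyclotomic character. One can therefore find $\sigma$ acting as $\lambda^{e}$ on both factors, with $e$ a multiple of $e(A,K)$ times a bound $c(d)$ on the index for CM tori of dimension at most $d$.

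The main obstacle is proving this bound. I would first get it from finiteness of the possible reflex types in dimension at most $d$, combined with the fact that the Galois image of $A\times C$ contains an open subgroup of $\rho_A(\mathrm{Gal})\times T_C$ whose index is controlled by $[K\cap \Q^{ab}\cdot E(C):\Q]$. Finally, apply part one to $F\leq A\times C$ to pass to the quotient $(A\times C)/F$. Any extension of the field of definition of $F$ is absorbed, because the homothety argument above shows that these elements stabilise $F$.
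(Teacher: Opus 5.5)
Your part one works, and it is the same mechanism the paper uses at the end of its proof. An element acting on $A_{\tors}$ by a homothety stabilises every algebraic subgroup, because torsion is Zariski dense in it. Such an element therefore lies in $\Gal(\ol K/K(B))$ and induces the same homothety on $B_{\tors}$ and $(A/B)_{\tors}$. One correction: a set closed under multiples with least element $e$ need not equal $e\Z_{\geq1}$. What you need is closure under $\gcd$. This holds because the image is a group: $\lambda^{\gcd(e_1,e_2)}=(\lambda^{e_1})^a(\lambda^{e_2})^b$ is realised whenever $\lambda^{e_1}$ and $\lambda^{e_2}$ are.

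The genuine gap is your ``key step'' in part two, which carries the whole content of the theorem. Serre's theorem for $A$ and CM theory for $C$, taken separately, do not give one $\sigma\in\Gal(\ol K/K\cdot E(C))$ acting by $\lambda^e$ on $A_{\tors}$ and $C_{\tors}$ simultaneously.

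The joint image lies in the Mumford--Tate group $M$ of $A\times C$, which has infinite index in $M_A\times T_C$. The similitude characters of the two factors must agree. Moreover, when $A$ has CM factors, or a non-trivial centre, attached to CM fields related to that of $C$, the central parts are tied together further. So your claim that the image contains an open subgroup of $\rho_A(\Gal)\times T_C$, with index controlled by $[K\cap\Q^{ab}E(C):\Q]$, is false. The overlap that matters is between the torsion fields $K(A_{\tors})$ and $E(C)(C_{\tors})$, not between $K$ and $E(C)$. The cyclotomic character only pins down the similitude factor.

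Likewise, no uniformity can come ``via the class number'': class numbers of reflex fields in dimension $\leq d$ are unbounded. The correct uniform statement is that the image contains all $e$-th powers, with $e$ depending only on the dimension.

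The paper instead proves the stronger Theorem~\ref{thm:Serre unif principal}: $z^{e'}\in U$ for all $z\in Z(M)(\wh\Z)$, where $U$ is the image of $\Gal(\ol K/K\cdot E(C))$. It uses two ingredients your plan lacks.

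\begin{enumerate}
\item From \cite[App.~B]{RY:IHES}: the images $Y(p)$ of Galois in $M_A^{ad}(\Q_p)$ have finite abelianisation, trivial for $p\geq p_0$, and have no proper subgroups of index $<p$ for $p\geq p_1$. Since $K\cdot E(C)/K$ is an extension of degree bounded in $d$ followed by an abelian one, this bounds $[U:(U\cap M^{der}(\wh\Z))\cdot(U\cap Z(M)(\wh\Z))]$ uniformly in $C$. This is exactly what decouples the $A$-part from the central part.
\item The Deligne--Shimura reciprocity law for the torus $M^{ab}$ of the product, not for $T_C$ alone. It is made uniform through the bounded reflex-norm coefficients of \cite{UY} and the finiteness of combinatorial types (Proposition~\ref{prop:F3}). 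One then lifts from $M^{ab}$ to $Z(M)$ through an isogeny whose kernel $Z(M^{der})$ depends only on $A$.
\end{enumerate}

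Your ``finitely many reflex types'' points towards the second ingredient. Without the first, there is no way to correct a $\sigma$ that is a homothety on $A_{\tors}$ into one that is also a homothety on $C_{\tors}$.
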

\begin{proof}[Proof of Theorem~\ref{thm:Serre hybrid} assuming Theorem~\ref{thm:Serre unif principal}]
Let~$M$ be the Mumford-Tate group of~$A\times C$. We recall that there is an inclusion~$GL(1)\to Z(M)$ such that the~$\A_f$-linear representation of~$M(\A_f)$ on the~$\A_f$-Tate module of~$A\times C$ induces the action of~$GL(1,\A_f)$ by homotheties.

Let~$Z(M)(\wh{\Z})$ be as in Theorem~\ref{thm:Serre unif principal}. Let~$e:=e'(A,K,\dim(C))$ be as in Theorem~\ref{thm:Serre unif principal}. Then Theorem~\ref{thm:Serre unif principal} implies in particular that for every~$\lambda \in GL(1,\wh{\Z})$, the image of~$Gal(\ol{K}/K\cdot E(C))\to M(\A_f)$ contains~$\lambda^e$. Equivalently, the image of~$Gal(\ol{K}/K\cdot E(C))\to Aut((A\times C)_{tors})$ contains~$\lambda^e$.

By definition,~$\lambda^e$ is in the image of~$Gal(\ol{K}/K\cdot E^\ast(C))\leq Gal(\ol{K}/K\cdot E(C))\to Aut((A\times C)_{tors})$. 

Note that~$F$ is globally invariant under homotheties. Therefore~$(A\times C)/F$ and the quotient map~$A\times C\to (A\times C)/F$ are defined over~$K\cdot E^\ast(C)$. Let~$\sigma\in Gal(\ol{K}/K\cdot E^{\ast}(C))$ be such that~$\sigma$ act by the homothety~$\lambda^e$ on~$(A\times C)_{tors}$. Then~$\sigma$ act by the homothety~$\lambda^e$ on~$(A\times (C/F))_{tors}\simeq (A\times C)_{tors}/F_{tors}$.

This implies Theorem~\ref{thm:Serre hybrid}.
\end{proof}
%
%
%


Theorem~\ref{thm:Serre hybrid} is therefore a consequence of the following.
\begin{theorem}\label{thm:Serre unif principal}
 Assume that~$A$ is a product of simple factors, and that the CM factors of~$A$ are principal, and let~$C$ be a principal CM abelian variety. We denote by~$E$ the reflex field of~$C$ and by~$E(C)/E$ the Hilbert class field extension. We recall that~$C$ admits a model over~$E(C)$.

Let~$M$ be the Mumford-Tate group of~$A\times C$ and let~$Z(M)(\widehat{\Z})\leq Z(M)({\A_f})$ be the maximal compact subgroup. Let
\[
\rho_{A\times C}(Gal(\ol{K}/K\cdot E(C))\to M(\A_f)
\]
be the Galois representation induced by the Tate module of~$A\times C$, and let~$U\leq  M(\A_f)$ be its image.

Then there exists~$e':=e'(A,K,\dim(C))$ such that
\begin{equation}\label{eq:thm SUP}
\{z^{e'}| z\in Z(M)(\widehat{\Z})\}\subseteq U.
\end{equation}
\end{theorem}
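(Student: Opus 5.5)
We reduce the statement to two independent pieces, one for~$A$ and one for~$C$, glued along the centre of~$M$. First recall the structure of~$Z(M)$. Write~$M_A$ and~$M_C=T_C$ for the Mumford--Tate groups of~$A$ and~$C$; then~$M\leq M_A\times T_C$ and the projections~$M\to M_A$, $M\to T_C$ are surjective. Since~$A$ is a product of simple factors, $Z(M_A)^0$ is a torus whose character group is controlled by the centres of the endomorphism algebras of the simple factors of~$A$. The connected centre~$Z(M)^0$ maps with finite kernel into~$Z(M_A)^0\times T_C$, and its image is cut out by the Hodge-theoretic relations between the CM types of~$C$ and of the CM factors of~$A$. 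The key point is that, since~$\dim C\leq d$, there are only finitely many possible CM types (up to the relevant isomorphism) and hence only finitely many possible isomorphism classes of the triple~$(M,Z(M),M\to M_A\times T_C)$. Consequently the index~$[Z(M)(\wh{\Z}):Z(M)^0(\wh{\Z})]$ and the torsion of~$\pi_0$ are bounded in terms of~$A$ and~$d$ only. It therefore suffices to produce~$z^{e'}$ for~$z$ in~$Z(M)^0(\wh{\Z})$, at the cost of multiplying~$e'$ by a bounded integer.

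Next we handle each factor. For~$C$: by CM theory (Shimura--Taniyama), the image of~$Gal(\ol{\Q}/E(C))$ in~$T_C(\A_f)$ is the image of~$\wh{O}_E^\times$ under the reflex norm, up to the global units. Since~$C$ is principal and we work over the Hilbert class field, this image contains~$r(\wh{O}_E^\times)$ up to an index bounded in terms of the unit group and the degree. Hence a bounded power of every element of~$T_C(\wh{\Z})$ lying in the image of the reflex norm is in~$\rho_C(Gal)$. For~$A$: we combine Theorem~\ref{thm:uniform Faltings} with Bogomolov/Serre (Theorem~\ref{thm:Serre eAK}) as in~\cite[App.~B]{RY:IHES}. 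These give that~$\rho_A(Gal(\ol{K}/K'))$, for~$[K':K]\leq D$, contains~$z^{e_A}$ for all~$z\in Z(M_A)(\wh{\Z})$, with~$e_A$ uniform in~$D$. The extension~$K\cdot E(C)/K$ has unbounded degree, however, so we must instead use the linear disjointness of~$K^{ab}$-type extensions. The image of~$Gal(\ol{K}/K\cdot E(C))$ in the abelian quotient~$Z(M_A)$-part differs from that of~$Gal(\ol{K}/K)$ only through the abelian extension~$E(C)K/K$. Its contribution is killed by passing to the commutator/ power structure, so that the index stays bounded by a function of~$[E:\Q]\leq 2d$.

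Finally, we glue the two factors. We need simultaneous elements: for~$z=(z_A,z_C)\in Z(M)^0(\wh{\Z})$ we want~$\sigma$ acting as~$z_A^{e'}$ on~$A$ and as~$z_C^{e'}$ on~$C$. Consider the image~$U\leq M(\A_f)$ and its intersection with the centre. Goursat's lemma applied to~$U\cap Z(M)(\wh{\Z})\to Z(M_A)\times T_C$, together with the fact that the projections each contain bounded powers, gives that~$U\cap Z(M)(\wh{\Z})$ has bounded index in~$Z(M)(\wh{\Z})$. The required bound comes from the finiteness of the possible~$Z(M)$ noted above and the bounded exponent of the ``fibre product defect''. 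Since~$Z(M)(\wh{\Z})$ is abelian, an open subgroup of index~$N$ contains all~$N$-th powers, which yields~\eqref{eq:thm SUP}. The main obstacle is this gluing step. The Goursat quotient might a priori be large, since it is controlled by the common abelian subextension of the field cut out by~$A$'s centre-part and~$E(C)$. Bounding it uniformly in~$C$ is where I expect to use that both characters factor through Hodge-theoretic reflex norms. Their compatibility is then forced algebraically by the Mumford--Tate relations, with defect depending only on the finitely many CM-type configurations of dimension~$\leq d$.
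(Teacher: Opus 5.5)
There is a genuine gap, located at the gluing step you flag, and it also affects your treatment of $A$ over $K\cdot E(C)$.

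\textbf{Projections do not control the subgroup.} Knowing that both projections of a subgroup of $Z(M_A)(\wh{\Z})\times T_C(\wh{\Z})$ contain bounded powers says nothing about the subgroup itself. For instance, the diagonal of $\wh{\Z}^\times\times\wh{\Z}^\times$ has surjective projections, yet contains $\{(x^e,y^e)\}$ for no $e$. Here the Goursat defect measures the entanglement between two fields: the abelian extension cut out by the central part of the action on $A_{tors}$, and the field generated by $C_{tors}$ over $K\cdot E(C)$. Nothing in your argument bounds this uniformly in $C$. Saying the compatibility is ``forced by the Mumford--Tate relations'' is a hope, not a step.

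\textbf{The $A$-part over $K\cdot E(C)$.} Your claim that the abelian extension $K\cdot E(C)/K\cdot E$, of unbounded degree, costs only a bounded factor is valid in $M^{ad}=M_A^{ad}$. There the images $Y_{E(C)}(p)\leq Y_E(p)$ of $\Gal(\ol{K}/K\cdot E(C))\leq \Gal(\ol{K}/K\cdot E)$ in $M^{ad}(\Q_p)$ differ by an abelian quotient. That quotient is bounded by $\#Y_E(p)^{ab}$, which is bounded independently of $C$ and trivial for $p$ large. The argument fails for the centre, where commutators are trivial and an abelian extension can a priori cut the image by an unbounded index. Also, aim for bounded powers rather than bounded index. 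The tools (e.g.\ Proposition~\ref{prop:F2}) only give powers, and $e$-th powers already have infinite index in $\wh{\Z}^\times$.

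\textbf{The missing idea.} Do not split $A$ from $C$ on the central side. Instead pass to the abelianisation $M^{ab}$ of the Mumford--Tate group of the product. The composite $\Gal(\ol{K}/K\cdot E(C))\to M^{ab}(\A_f)$ is governed by the Deligne--Shimura reciprocity law for the single special datum $(M^{ab},x^{ab})$. Over the Hilbert class field, its image comes from unit id\`eles through a reflex norm. The uniformity of reflex norms (Theorem~\ref{thm:F1}, Proposition~\ref{prop:F3}) then gives Theorem~\ref{thm:Reciprocity principal}: every $f$-th power in $M^{ab}(\wh{\Z})$ is hit, with $f$ depending only on $\dim(A\times C)$ and $K$. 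You invoke this CM input for $C$ alone; applied to $A\times C$, it controls all the entanglement at once.

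Two further steps finish the proof:
\begin{enumerate}
\item A Goursat-type argument, but between $M^{der}$ and $Z(M)$ rather than between $A$ and $C$, gives $[U:(U\cap M^{der}(\wh{\Z}))\cdot(U\cap Z(M)(\wh{\Z}))]\leq c_2(A,K,\dim C)$. This works because the defect is bounded by $\prod_p\#Y_{E(C)}(p)^{ab}$. Consequently, powers in $M^{ab}(\wh{\Z})$ are realised by elements of $U\cap Z(M)(\wh{\Z})$.
\item The kernel $\ker(Z(M)\to M^{ab})=Z(M^{der})$, with $M^{der}=M_A^{der}$, has order $c_3(A)$ depending only on $A$. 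So Lemma~\ref{lem:liftkernel} lifts these powers to $Z(M)(\wh{\Z})$.
\end{enumerate}

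Step 2 also replaces your reduction to $Z(M)^0$, which is inaccurate as stated. $\Q$-tori of bounded dimension are not finitely many up to isomorphism; only their combinatorial data are. Moreover, $[Z(M)(\wh{\Z}):Z(M)^0(\wh{\Z})]$ can be infinite, e.g.\ $Z(M)\simeq\mathbb{G}_m\times\mu_2$ for a product of two non-isogenous non-CM elliptic curves. Only its exponent is bounded.
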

The following was proved in~\cite[App. B]{RY:IHES}. Let~$M_A$ be the Mumford-Tate group of~$A$, and let~$U_{A/K}$ be the image of~$\rho_A:Gal(\ol{K}/K)\to M_A(\A_f)$. 
For every prime~$p$, let~$Y(p)$ be the image of~$U_{A/K}$ in~$M^{ad}(\Q_p)$, and let~$Y(p)^{ab}$ be its maximal abelian quotient. Then, by~\cite[App. B (110) and (111)]{RY:IHES}
\begin{itemize}
\item for every prime~$p$, we have
\begin{equation}\label{eq:110}
c(p,A,K):=\# Y(p)^{ab}<+\infty
\end{equation}
\item and there exists~$p_0=p_0(A,K)$ such that
\begin{equation}
\label{eq:111}
\forall p\geq p_0, \# Y(p)^{ab}=1.
\end{equation}
\end{itemize}
Moreover~\cite[App. B (113)]{RY:IHES} imply that there exists~$p_1(A,K)$ such that for every~$p\geq p_1(A,K)$, the group~$Y(p)$ is generated by topologically~$p$-nilpotent elements. In particular, for~$p\geq p_1(A,K)$ and every closed subgroup~$H\leq Y(p)$, we have
\begin{equation}\label{eq:113}
[Y(p):H]<p\Rightarrow
[Y(p):H]=1.
\end{equation}

\begin{proof}[Proof of Theorem~\ref{thm:Serre unif principal}]
Let~$Y_{E(C)}(p)\leq Y_E(p)\leq Y_{A/K}(p)$ be the images of~$Gal(\ol{K}/K\cdot E(C))\leq Gal(\ol{K}/K\cdot E)\leq Gal(\ol{K}/K\cdot E)$ in~$M^{ad}(\A_f)=M_A^{ad}(\A_f)$. Then
\[
[Y(p):Y(p)_E]\leq [K\cdot E:K]\leq [E:C]\leq f(\dim(C)):=2^{\dim(C)}\cdot \dim(C)!.
\]
and~$Y_{E(C)}(p)\leq Y_E(p)$ is a normal subgroup such that~$Y_E(p)/Y_{E(C)}(p)$ is abelian. 

Note that, for every~$p$, the~$p$-adic Lie group~$Y(p)$ has only finitely many open subgroups of index at most~$f:=f(\dim(C))$. The proof of~\cite[App. B (111)]{RY:IHES} applies to every open subgroup of~$Y(p)$.
We have thus
\[
\#{Y_E(p)}^{ab}\leq c_0(A,K,f,p):=
\max\{\# H^{ab}| [Y(p):H]\leq f\}<+\infty.
\]
Therefore
\[
[Y_E(p):Y_{E(C)}]\leq c_0:=c_0(A,K,f,p).
\]
Similarly,
\[
\#{Y_{E(C)}(p)}^{ab}\leq c_1(A,K,f,p):=c_0(A,K,f\cdot c_0,p).
\]

Let~$p\geq p_2(A,K,\dim(C)):=\max\{p_0(A,K);p_1(A,K);f(\dim(C))+1\}$. By~\eqref{eq:113}, we deduce from~$[Y(p):Y_E(p)]\leq f(\dim(C))$ that
\[
Y(p)=Y_E(p).
\]
We can thus deduce from~\eqref{eq:111} that
\[
Y_E(p)=Y_{E(C)}(p),
\]
and thus that
\[
\#Y(p)^{ab}=\#Y_E(p)^{ab}=\#Y_{E(C)}(p)^{ab}=1.
\]
Finally, we have
\begin{multline}
\prod_p \#{Y_{E(C)}(p)}^{ab}\\
\leq
c_2(A,K,\dim(C))
:=
\prod_{p\leq p_2(A,K,\dim(C))}  c_1(A,K,f,p)<+\infty.
\end{multline}
Let~$U_{E(C)}$ be the image of~$\rho_{A\times C}(Gal(\ol{K}/K\cdot E(C))\to M(\A_f)$.
We consider and follow the proof of \cite[App. B, \S B.4.2, p.\,326]{RY:IHES}, for~$M$ the Mumford-Tate group of~$A\times C$ and~$U_{E(C)}$ instead of~$U$. 

We deduce that 
\begin{multline}
[\Gamma:(\Gamma\cap G_1)\times (\Gamma\cap G_2)]=\abs{\Gamma/(\Gamma\cap G_1)}
\\=
\abs{\Gamma/(\Gamma\cap G_2)}
\leq c_2(A,K,\dim(C)).
\end{multline}
Then~\cite[App. B, \S B.4.2, p.\,326]{RY:IHES} gives us
\[
\left[U_{E(C)}:\left(U_{E(C)}\cap M^{der}(\widehat{\Z})\right)\cdot
\left(U_{E(C)}\cap Z(M)(\widehat{\Z})\right)\right
]\leq c_2(A,K,\dim(C)).
\]
We consider the map
\[
\pi:Z(M)\to M^{ab}.
\]
Then~$\ker(\pi)=Z(M^{der})$, where~$M^{der}$ only depends on~$A$. Then for every prime~$p$, we have
\[
\#Z(M^{der})(\Q_p)\leq \#Z(M^{der})(\ol{\Q_p})=c_3(A):=\# Z(M^{der})(\ol{\Q}).
\]
We deduce that
\[
\forall z\in Z(M)(\wh{\Z}), \pi(z)=1\Rightarrow z^{c_3(A)}=1.
\]

We recall that~$C$ is assumed to be a principal CM abelian variety. By Appendix~\ref{sec:appendix reciprocity}, Theorem~\ref{thm:Reciprocity principal} (cf. also \cite[Lem.\,2.7 and its proof]{UY}), there exists~$n:=n(A,\dim(C),K)$ such that for every~$\lambda$ in~$M^{ab}(\wh{\Z})$, the element~$\lambda^{n}$
is in the image of~$U\to M^{ab}(\wh{\Z})$.

Let~$U':=\left(U_{E(C)}\cap M^{der}(\widehat{\Z})\right)\cdot \left(U_{E(C)}\cap Z(M)(\widehat{\Z})\right)$.
Then the image of~$U'\to M^{ab}(\wh{\Z})$ contains
\[
\{\lambda^{n\cdot c_2(A,K,\dim(C))!}|\lambda\in M^{ab}(\wh{\Z})\}.
\]
But this is also the image of~$U''':=U\cap \left(U_{E(C)}\cap Z(M)(\widehat{\Z})\right)$. 
By Lemma~\ref{lem:liftkernel}, we deduce that
\[
\forall z\in Z(M)(\widehat{\Z}), \lambda^{n\cdot c_2(A,K,\dim(C))!\cdot c_3(A)}\in U'''.
\]
This proves~\eqref{eq:thm SUP} for~$e'=n\cdot c_2(A,K,\dim(C))!\cdot c_3(A)$.
\end{proof}

%
%
%
%
%

\subsection{}

We will later use the following.
\begin{lemma}\label{lem:Lang ne}
For every~$e\in \Z_{\geq1}$, there exists~$n_e\in \Z_{\geq1}$ such that we have the following inclusion of ideals
\[
\sum_{\lambda\in \Z^{\times}} (\lambda^e-1)\cdot \wh{\Z}\geq n_e\cdot \wh{\Z}.
\]
\end{lemma}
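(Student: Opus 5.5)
We read $\Z^{\times}$ in the statement as $\wh{\Z}^{\times}$, as in the rest of the section. With $\Z^\times=\{\pm1\}$ the statement would fail for even $e$, since every $\lambda^e-1$ would vanish. The plan is to exhibit a \emph{single} $\lambda\in\wh{\Z}^\times$ such that $\lambda^e-1=n_e\cdot u$ with $u\in\wh{\Z}^\times$. The principal ideal $(\lambda^e-1)\wh{\Z}$ is then $n_e\wh{\Z}$, and it is contained in the sum of ideals on the left-hand side. To construct $\lambda$ we use $\wh{\Z}=\prod_\ell\Z_\ell$ and $\wh{\Z}^\times=\prod_\ell\Z_\ell^\times$, and choose the component $\lambda_\ell$ one prime at a time.

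\emph{Large primes.} Suppose $(\ell-1)\nmid e$; this holds for every $\ell>e+1$. Let $\lambda_\ell\in\Z_\ell^\times$ be the Teichmüller lift of a generator of $\F_\ell^\times$. Its reduction $\bar\lambda_\ell$ has order $\ell-1$, so $\bar\lambda_\ell^{\,e}\neq1$ in $\F_\ell$. Hence $\lambda_\ell^e-1\in\Z_\ell^\times$, and we set $v_\ell:=0$.

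\emph{Small primes.} These are the finitely many primes $\ell\le e+1$ with $(\ell-1)\mid e$. For odd $\ell$ take $\lambda_\ell=1+\ell$, and for $\ell=2$ take $\lambda_2=1+4=5$. The standard computation of valuations of $(1+x)^e-1$ when $v_\ell(x)\ge1$ (respectively $\ge2$ for $\ell=2$), by the binomial expansion or the $\ell$-adic logarithm, gives
\[
v_\ell(\lambda_\ell^e-1)=v_\ell(e)+1 \quad\text{for odd }\ell,\qquad v_2(\lambda_2^e-1)=v_2(e)+2 .
\]
Write $v_\ell$ for this finite valuation. Then $\lambda_\ell^e-1=\ell^{v_\ell}u_\ell$ with $u_\ell\in\Z_\ell^\times$.

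\emph{Conclusion.} Put $\lambda=(\lambda_\ell)_\ell\in\wh{\Z}^\times$ and $n_e:=\prod_{\ell\le e+1}\ell^{v_\ell}$. At each prime $\ell$, the $\ell$-component of $\lambda^e-1$ and that of $n_e$ differ by a unit of $\Z_\ell$. So $\lambda^e-1=n_e\cdot u$ with $u\in\wh{\Z}^\times$, and therefore
\[
\sum_{\lambda\in\wh{\Z}^\times}(\lambda^e-1)\wh{\Z}\supseteq n_e\wh{\Z}.
\]

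The only step that needs care is the valuation computation at the small primes, in particular $\ell=2$. If one prefers to avoid exact valuations there, it is enough to note that $\lambda_\ell^e\neq1$ for $\lambda_\ell=1+\ell^2$. This holds because $1+\ell^2\Z_\ell$ is torsion-free, so $\lambda_\ell^e-1$ is a nonzero element of $\Z_\ell$ and its valuation is finite. That finiteness is all the argument uses.
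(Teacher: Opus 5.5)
Your proof is correct and follows the same route as the paper's. Both split the primes according to whether $(\ell-1)\mid e$: when it does not, a lift of a generator of $\F_\ell^\times$ makes $\lambda_\ell^e-1$ a unit; when it does, $1+\ell$ makes $\lambda_\ell^e-1$ nonzero (the paper uses $1+\ell$ also at $\ell=2$, where you use $5$). Your reading of $\Z^\times$ as $\wh{\Z}^\times$ matches what the paper's proof actually uses, assembling a single $\lambda=(\lambda_\ell)_\ell$ makes explicit the local-to-global step the paper leaves implicit, and your exact valuations give the sharper $n_e=\prod_{(\ell-1)\mid e}\ell^{v_\ell}$ instead of the paper's $\prod_{(\ell-1)\mid e}((1+\ell)^e-1)$.
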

\begin{proof}Let~$\ell$ be a prime such that~$\ell-1$ does not divide~$e$. As~$\F_\ell^\times\approx \Z/(\ell-1)\Z$, there exists~$\ol{\lambda}\in \F^\times_\ell$ such that~$\ol{\lambda}^e\neq1$. Let~$\lambda\in \wh{\Z}^\times$ 
such that~$\ol{\lambda}=\lambda\pmod{e}$. The image of~$(\lambda^e-1)\cdot \wh{\Z}$ in~${\F}_\ell$ contains the non-zero invertible element~$\ol{\lambda}^e-1$. This image is thus~${\F}_\ell$.
By Nakayama lemma,~$({\lambda}^e-1)\cdot\Z_\ell=\Z_\ell$.

Let~$\ell$ be any prime and~$\lambda=1+\ell$. Then~$((1+\ell)^e-1)\cdot \Z_\ell$ is a non zero ideal of~$\Z_\ell$, and thus is open in~$\Z_\ell$.

We deduce that, with~$n_e:=\prod_{\ell-1\mid e}((1+\ell)^e-1)$, we have 
\[
\sum_{\lambda\in \Z^{\times}} ((\lambda^e-1)-1)\cdot \wh{\Z}\geq \left(\prod_{\ell-1\mid e}((1+\ell)^e-1)\Z_\ell \right)\times\left(\prod_{\ell-1\nmid e} \Z_\ell\right)\geq n_e\cdot \wh{\Z}.
\] This proves Lemma~\ref{lem:Lang ne}.
\end{proof}

\subsection{Kummer theory for abelian varieties}\label{sec:Kummer}

Let~$\Gamma\leq A(K)$ be a subgroup, and let~$\Gamma_0:=\{Q\in A(\ol{K})|\Z_{\geq1}\cdot Q\cap \Gamma\neq \emptyset\}$. We have an exact sequence
\[
0\to A_{tors}\to \Gamma_0\to \Gamma\tens \Q\to 0.
\]
Observe that~$Gal(\ol{K}/K)$ acts trivially on~$\Gamma$, and thus acts trivially on~$\Gamma\tens \Q$ and~$\Gamma\tens\Q/\Z$.
Therefore, we have~$Gal(\ol{K}/K)$-equivariant sequence
\[
0\to A_{tors}\to \Gamma_0/\Gamma \to \Gamma\tens \Q/\Z\to 0.
\]
We assume~$\Gamma\approx\Z^k$ for some~$k\in\Z_{\geq1}$. Then~$\Hom(\Q/\Z,\Gamma\tens\Q/\Z)\simeq \Gamma\tens\wh{\Z}\approx\wh{\Z}^k$ and
\[
\wh{T}_{A,\Gamma}:=
\Hom(\Q/\Z, \Gamma_0/\Gamma)
\approx 
\wh{\Z}^{2\dim(A)+k}
\]
is a~$Gal(\ol{K}/K)$-equivariant extension
\begin{equation}\label{eq:extension}
0\to \wh{T}_{A}\to \wh{T}_{A,\Gamma}\to \Gamma\tens\wh{\Z}\to 0.
\end{equation}
The representation~$\rho_{A,\Gamma}:Gal(\ol{K}/K)\to Aut(\wh{T}_{A,\Gamma})$ stabilises~$\wh{T}_{A}$ and acts trivially on the quotient~$\wh{T}_{A,\Gamma}/\wh{T}_{A}$. There exists thus a map
\[
m=m_{A,K,\Gamma}:Gal(\ol{K}/K)\to\Hom(\Gamma,\wh{T}(A))
\] such that we can represent
\[
\rho_{A,\Gamma}(\sigma)
=
\begin{pmatrix}
\rho_A(\sigma)&m(\sigma)\\
0& 1
\end{pmatrix}
\in
\begin{pmatrix}
\End(\wh{T}(A))&\Hom(\Gamma,\wh{T}(A))\\
\Hom(\wh{T}(A),\Gamma\tens\wh{\Z})& \End(\Gamma\tens\wh{\Z})
\end{pmatrix}.
\]
We call~$(\wh{T}_{A,\Gamma},\rho_{A,\Gamma})$ the \emph{affine Tate module}. The map~$m$ is the cocycle of the~$Gal(\ol{K}/K)$-module~$\Hom(\Gamma,\wh{T}(A))$ coming
from the extension~\eqref{eq:extension}. Its restriction on~$Gal(\ol{K}/K(A_{tors}))=\ker(\rho_A)$, which acts trivially on~$\Hom(\Gamma,\wh{T}(A))$, is an homomorphism of groups~$\ker(\rho_A)\to (\Hom(\Gamma,\wh{T}(A)),+)$.

The object of \emph{Kummer theory for abelian varieties} is to study
\[
\rho_{A,\Gamma}(\ker(\rho_A))=
\begin{pmatrix}
1&m(\ker(\rho_A))\\
0& 1
\end{pmatrix}
=
Im(\rho_{A,\Gamma})
\cap
\begin{pmatrix}
1&\Hom(\Gamma,\wh{T}(A))\\
0& 1
\end{pmatrix},
\]
or to study the map
\begin{equation}\label{eq:Kummer m}
m:Gal(\ol{K}/K(A_{tors}))\to \Hom(\Gamma,\wh{T}(A)).
\end{equation}
We refer to \cite{Ribet}, \cite[App. 2]{Hindry} or \cite[{\S\S} 1--2]{Bertrand} for the following.
\begin{theorem}[Ribet]\label{thm:Kummer}
Let~$A'$ be the abelian variety~$\Hom(\Gamma,A)\approx A^k$, and let~$P\in A'(K)$ be the identity~$\Gamma\to A$.
Let~$[P]:=\ol{\Z\cdot P}^{Zar}$ and~$B=[P]^0$. Then
\[
m(Gal(\ol{K}/K(A_{tors})\cdot K(B))\leq \wh{T}(B)\text{ in }\wh{T}(A')\simeq \Hom(\Gamma,\wh{T}(A))
\]
Let
\begin{equation}\label{}
k(A,K,\Gamma):=[\wh{T}(B):m(Gal(\ol{K}/K(A_{tors})\cdot K(B))]<+\infty.
\end{equation}
If that~$K$ is of finite type over~$\Q$,
then
\[
k(A,K,\Gamma)<+\infty.
\]
\end{theorem}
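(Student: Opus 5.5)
This is Ribet's theorem, and I would follow the presentation in \cite[App.~2]{Hindry}. First I would reduce to the case where $\End(A)=\End(A/\ol{K})$ and $B$ is defined over $K$, by passing to a finite extension. Such an extension only changes the image of $m$ by a finite index, so the index in Theorem~\ref{thm:Kummer} stays finite. Next I would replace $A$ by $A'=\Hom(\Gamma,A)$ and $\Gamma$ by $\Z\cdot P$. The cocycle attached to $\Gamma$ is then identified with the cocycle $m_P:\sigma\mapsto (\sigma(P/N)-P/N)_N$ with values in $\wh{T}(A')$.

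\textbf{Inclusion.} Some multiple $cP$ lies in $B(K)$, where $c=[[P]:B]$. On $\ker\rho_{A'}$, the points $\sigma(P/N)-P/N$ are determined by the division points of $cP$ in $B$, up to $c$-torsion, and that torsion is fixed by $\ker\rho_{A'}$. So $m$ takes values in $\wh{T}(B)$. Passing to the quotient $A'/B$, the image of $P$ is torsion, and its Kummer cocycle vanishes on $\ker\rho$. This gives the inclusion $m(\ldots)\leq\wh{T}(B)$.

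\textbf{Reduction to Zariski-dense $P$.} By the inclusion we may work inside $B$, with $Q:=cP\in B(K)$ generating a Zariski-dense subgroup of $B$. The key structural step is this. Let $W:=m(\ker\rho_B)\otimes\Z_\ell$, a $\Z_\ell[\rho_B(Gal(\ol{K}/K))]$-submodule of $T_\ell(B)$. By Faltings' theorem (Theorem~\ref{thm:uniform Faltings}), $T_\ell(B)\otimes\Q_\ell$ is a semisimple Galois module whose commutant is $\End(B)\otimes\Q_\ell$. If $W$ were not open, there would be a nonzero Galois-equivariant functional vanishing on it. By Faltings it comes from some nonzero $f\in\End(B)\otimes\Z_\ell$, and by density of $\End(B)$ we may take $f\in\End(B)$ with $m_{f(Q)}(\ker\rho)$ small. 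This means $f(Q)$ becomes highly divisible over $K(B_{tors})$.

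\textbf{Main obstacle.} The hardest step is the contradiction with Mordell–Weil. I would bound $H^1(\rho_B(Gal),B_{tors})$ using Serre's Theorem~\ref{thm:Serre eAK}. A homothety $\lambda^e$ with $\lambda^e-1$ invertible gives a Sah-type lemma (\cite[App.~2, Lem.~D]{Hindry}): multiplication by $n_e$ from Lemma~\ref{lem:Lang ne} kills this cohomology. Therefore divisibility of $f(Q)$ over $K(B_{tors})$ implies divisibility over $K$, up to bounded torsion. By Theorem~\ref{thm:MWN}, $B(K)$ is finitely generated, so $f(Q)$ must be torsion. Then $f([Q])$ is finite, which contradicts $[Q]=B$ and $f\neq0$.

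This shows $W$ is open for each $\ell$. For almost all $\ell$ I would run the same argument mod $\ell$: equality $W=T_\ell(B)$ follows from semisimplicity of $B[\ell]$ for $\ell\gg0$ (as in the proof of \eqref{eq:Tateclaim1}), from $\ell\nmid n_e$, and from $Q$ being indivisible by $\ell$ in $B(K)/\text{tors}$ for $\ell\gg0$. Together these give finiteness of the adelic index $k(A,K,\Gamma)$.
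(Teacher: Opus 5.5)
Your outline follows the same Hindry-style route as the paper, which cites Theorem~\ref{thm:Kummer} and proves the stronger Theorem~\ref{thm:uniform Kummer2} (containing it for $E=K$). As written, however, it proves a weaker statement. You set $W:=m(\ker\rho_B)\tens\Z_\ell$ and bound $H^1(\rho_B(\Gal(\ol K/K)),B_{tors})$, so you work over $K(B_{tors})$. The theorem concerns the smaller group $\Gal(\ol K/K(A_{tors}))=\ker\rho_{A'}$. The extension $K(A_{tors})/K(B_{tors})$ is in general infinite: take $A=E_1\times E_2$ with $E_1,E_2$ non-isogenous elliptic curves and $P=(P_1,0)$. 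So finite index of $m(\ker\rho_B)$ says nothing about $m(\ker\rho_{A'})$. The missing content is that $K(A_{tors})$ meets the field generated over $K(B_{tors})$ by the division points of $Q$ in a finite extension of $K(B_{tors})$.

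The repair is to keep $H=\ker\rho_{A'}$ throughout. The image $m(H)$ stays Galois-stable, since $m(\tau\sigma\tau^{-1})=\tau\cdot m(\sigma)$ for $\sigma\in H$. Then apply Sah to $G'=\rho_A(\Gal(\ol K/K))$, using Serre's Theorem~\ref{thm:Serre eAK} for $A$ rather than for $B$. Its homotheties $\lambda^{e}$, with $e=e(A,K)$, are central in $G'$ and act on $B[N]\subseteq A'[N]$ as scalars, so $n_e$ (Lemma~\ref{lem:Lang ne}) kills $H^1(G',B[N])$. This is exactly Proposition~\ref{prop:412}.

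Second, the step ``$f(Q)$ must be torsion, so $f([Q])$ is finite'' fails as stated. A fixed approximant $f\in\End(B)$ is only $\ell^k$-divisible for one finite $k$, which contradicts nothing. For the $\ell$-adic limit $f\in\End(B)\tens\Z_\ell$, the expression $f([Q])$ is meaningless. What you need is the uniform estimate of Corollary~\ref{cor:613} and Proposition~\ref{prop:preliminaire}. Let $c'$ be the exponent of $(B(K)/\End(B)\cdot Q)_{tors}$, finite by Theorem~\ref{thm:MWN}. Then
\[
m_{\alpha(Q)}(H)\subseteq N\cdot\wh{T}(B)\ \Longrightarrow\ c'n_e\,\alpha\in N\cdot\End(B)\qquad\text{for all }N\geq1,\ \alpha\in\End(B).
\]
This uses the injectivity of $\alpha\mapsto\alpha(Q)$, which is where $[Q]=B$ enters. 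Take approximants $f_i\to f$ with $f_i(W)\subseteq\ell^{k_i}T_\ell(B)$ and $k_i\to\infty$. The estimate gives $f_i\in\ell^{k_i-v_\ell(c'n_e)}\End(B)\tens\Z_\ell$, hence $f=0$.

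At $N=\ell\nmid c'n_e$, the same estimate gives what your mod-$\ell$ step actually needs: $f(Q)\notin\ell B(K)$ for every $f\in\End(B)\smallsetminus\ell\End(B)$. Indivisibility of $Q$ alone does not suffice when $\End(B)\neq\Z$.

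Once repaired, your argument differs from the paper only in how all primes are handled at once. You treat $\ell\gg0$ separately via semisimplicity of $B[\ell]$, $\End_{\Gal}(B[\ell])=\End(B)\tens\F_\ell$, and Nakayama. The paper instead feeds the displayed estimate, for all $N$, together with the integral Tate statement Theorem~\ref{thm:uniform Faltings}, into the lattice result Theorem~\ref{thm:D1}, and gets one bound valid for every $\ell$. Your route is shorter and suffices for finiteness. The paper's route gives the explicit constant \eqref{eq:Kummer explicite}, which depends on $E$ only through $[E:K]$ and survives replacing $E$ by $E^{ab}$; Theorem~\ref{thm:uniform Kummer1} needs exactly that.
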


%
%
%

\subsection{Uniform Kummer theory}

We need the following uniform version.
\begin{theorem}[Uniform Kummer theory on a hybrid orbit]\label{thm:uniform Kummer1}
For every~$d\in\Z_{\geq0}$, there exists~$k(A,K,\Gamma;d)\in\Z_{\geq1}$ such that for every CM abelian variety~$C$ of dimension at most~$d$ 
\begin{equation}\label{eq:unifKummer1}
k(A\times C,K(C),\Gamma)\leq k(A,K,\Gamma;d),
\end{equation}
where~$\Q(C)$ is the field of definition of~$C$, and~$K(C)=K\cdot \Q(C)$.
\end{theorem}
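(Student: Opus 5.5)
We plan to follow the structure of Hindry's proof of Ribet's Theorem~\ref{thm:Kummer} (\cite[App.~2]{Hindry}) and make each constant explicit, so that it depends only on $(A,K,\Gamma)$ and on a bound for $[K(C):K]$ and for $\dim(C)$.

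The first step is to note that $\Gamma\leq A(K)$ and the section $P\in A'(K)$ of Theorem~\ref{thm:Kummer} do not involve $C$. In $(A\times C)'=\Hom(\Gamma,A\times C)\simeq A'\times C^k$ the point is $(P,0)$. Hence $[P]$ and $B=[P]^0$ are the same as for $A$ (embedded in the first factor), and the affine Tate module splits as
\[
\wh{T}_{A\times C,\Gamma}=\wh{T}_{A,\Gamma}\oplus \wh{T}(C).
\]
The cocycle $m$ for $A\times C$ is therefore the cocycle for $A$, restricted to the smaller group $Gal(\ol{K}/K(C)\cdot K((A\times C)_{tors})\cdot K(B))$. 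So we must bound the index
\[
[\wh{T}(B):m(Gal(\ol{K}/L_C))], \qquad L_C:=K(A_{tors})\cdot K(B)\cdot K(C)(C_{tors}).
\]
The key observation is that $K(C)(C_{tors})$ is an abelian extension of $K\cdot E$ (CM theory), where $E$ is the reflex field. Moreover $[K\cdot E:K]\leq f(d)$, with $f$ as in the proof of Theorem~\ref{thm:Serre unif principal}, and $[K(C):K\cdot E]$ is controlled by the class field construction.

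Next we show that the image of $m$ changes by a bounded index when $L_0:=K(A_{tors})K(B)$ is replaced by $L_C$. Write $H:=m(Gal(\ol{K}/L_0))$, an open subgroup of $\wh{T}(B)$ of index $k(A,K,\Gamma)$. The group $Gal(\ol{K}/L_C)$ is normal in $Gal(\ol{K}/L_0 E)$ with abelian quotient, because $L_C\subseteq L_0\cdot(K E)^{ab}$. Its image under $m$ therefore contains the closure of the commutator image $m([\mathcal{G},\mathcal{G}])$, where $\mathcal{G}:=Gal(\ol{K}/L_0E)$. The plan is to use the Galois-equivariance $m(\tau\sigma\tau^{-1})=\rho_A(\tau)m(\sigma)$ for $\sigma\in\ker\rho_A$. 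Then $m$ of the commutators $\tau\sigma\tau^{-1}\sigma^{-1}$, with $\tau\in Gal(\ol{K}/KE)$, generates $(\rho_A(\tau)-1)H$. By Theorem~\ref{thm:Serre hybrid}, $\rho_A(Gal(\ol{K}/KE))$ contains homotheties $\lambda^{e}$ with $e=e(A,K,d)$ uniform. Lemma~\ref{lem:Lang ne} then gives
\[
m(Gal(\ol{K}/L_C))\supseteq \sum_\lambda (\lambda^{e}-1)H\supseteq n_{e}H.
\]
The index is thus at most $k(A,KE,\Gamma)\cdot n_e^{2\dim B}$.

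It remains to bound $k(A,KE,\Gamma)$ uniformly over the finitely many possible degrees $[KE:K]\leq f(d)$. We do this by redoing Hindry's argument with $D=f(d)$. Here Theorem~\ref{thm:uniform Faltings} supplies a uniform index $F(A,K,D)$ for $\wh{\Z}[\rho_A(Gal(\ol{K}/E'))]$ in the commutant $Z$. Sah's lemma (\cite[App.~2, Lem.~D]{Hindry}) bounds $H^1(Gal(K(A_{tors})/E'),\wh{T}(A))$ by a power of the uniform $e$ from Theorem~\ref{thm:Serre eAK}/\ref{thm:Serre hybrid}. Mordell--Weil (Theorem~\ref{thm:MWN}) controls $\Gamma$ in $A(E')$ through a saturation index; since $A(E')\subseteq A(K')$ for a fixed finite extension $K'$ of bounded degree (there being finitely many such $E'$ inside a fixed compositum up to the relevant data), this index is uniform.

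I expect the main obstacle to be this last step, namely the uniformity of Hindry's quantitative argument in $E'$. Particular difficulties are controlling the torsion and saturation of $\Gamma$ in $A(E')$ and applying the commutant/Faltings step to $B$ instead of $A$. My first attempt will be to reduce to finitely many fields by replacing $E'$ with the Galois closure of bounded degree and using the finiteness of open subgroups of bounded index, as in the proof of \eqref{eq:Tateclaim2}.
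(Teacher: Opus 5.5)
Your first two steps are correct and differ from the paper mainly in packaging. The paper deduces the statement from Theorem~\ref{thm:uniform Kummer2}, a uniform bound for $k(A,E^{ab},\Gamma)$ over all $E/K$ of bounded degree. It does so simply by noting $K(C,(A\times C)_{tors})\subseteq E^{ab}(A_{tors})$. The abelian part is then absorbed inside that proof, by applying Sah's lemma to the image of $\Gal(\ol{K}/E)$ in $\Aut(\wh{T}(A))\times\Gal(\ol{K}/E)^{ab}$ (Proposition~\ref{prop:412}).

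Your identity $m(\tau\sigma\tau^{-1}\sigma^{-1})=(\rho_A(\tau)-1)\,m(\sigma)$ with $\rho_A(\tau)=\lambda^e$, combined with Lemma~\ref{lem:Lang ne}, is an elementary unrolled version of that mechanism. It does reduce everything to a uniform bound on $k(A,KE,\Gamma)$. Two small corrections are needed:
\begin{itemize}
\item $H$ must be $m(\Gal(\ol{K}/L_0E))$ rather than $m(\Gal(\ol{K}/L_0))$, since $\sigma$ has to fix $KE$ for the commutator to act trivially on $(KE)^{ab}$.
\item Commutators taken inside $\mathcal{G}$ itself contribute nothing, because $m$ is a homomorphism on $\ker\rho_A$; only $\tau\notin\ker\rho_A$ matters.
\end{itemize}

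The genuine gap is in the last step, which is where the content of the theorem lies. Your proposed source of uniformity for the Mordell--Weil/saturation input is false.
\begin{itemize}
\item The fields $E':=KE$ are neither finitely many nor contained in a fixed finite extension $K'$ of $K$. Already for $d=1$, $E$ runs over all imaginary quadratic fields.
\item Finiteness of open subgroups of bounded index holds in a fixed $\ell$-adic group, but fails for the $\wh{\Z}$-adic Galois image. That image surjects onto $\Gal(K(\mu_\infty)/K)$, an open subgroup of $\wh{\Z}^\times$ with infinitely many subgroups of index~$2$. So there is no reduction to finitely many cases.
\item Nor can you bound the saturation index of $\Theta:=\End(B)\cdot P$ in $B(E')$ by looking at $B(E')$ directly. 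The point $P$ can become more divisible over extensions: a solution of $NQ=P$ lives over an extension of degree at most $N^{2\dim B}$.
\end{itemize}

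The missing idea is to measure saturation only over the fixed field $K$ and transfer it by restriction--corestriction. Since $\mathrm{cor}\circ\mathrm{res}=[E':K]$, the kernel of $B(K)/NB(K)\to B(E')/NB(E')$ is killed by $[E':K]$ (Proposition~\ref{prop:411}). Let $c'$ be the exponent of $(B(K)/\Theta)_{tors}$, which is finite by Mordell--Weil over $K$. Combining this with Sah's lemma, the kernel of the Kummer map $\Theta/N\Theta\to\Hom(\Gal(\ol{K}/L_0E'),B[N])$ is killed by $c'\cdot[E':K]\cdot n_e$ (Corollary~\ref{cor:613}, Proposition~\ref{prop:preliminaire}).

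Even with this non-divisibility estimate, ``redoing Hindry's argument'' only yields finiteness field by field. To get a bound uniform in $E'$ and in all primes $\ell$ simultaneously, you need a quantitative linear-algebra statement. Set $\wh{W}:=m_P(\Gal(\ol{K}/L_0E'))$; Theorem~\ref{thm:uniform Faltings} furnishes an order $S$ stabilising $\wh{W}$. The required statement must turn this stability, together with the estimate above, into $\wh{W}\supseteq C(S,R,T)\cdot n\cdot\wh{T}$, and this is exactly Theorem~\ref{thm:D1}.
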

In essence, this is a statement about linear independance, over~$K(A_{tors})$, between~$K(C_{tors})$ and~$K(\Gamma_0)$.

Theorem~\ref{thm:uniform Kummer1} will is a consequence of the following.
\begin{theorem}[Uniform Kummer theory]\label{thm:uniform Kummer2}
For every~$d\in\Z_{\geq0}$, there exists~$\kappa(A,K,\Gamma;d)$ such that for every extension~$[E:K]$ of dimension at most~$d$
\begin{equation}\label{eq:unifKummer2}
k(A,E^{ab},\Gamma)\leq  \kappa(A,K,\Gamma;d)<+\infty.
\end{equation}
\end{theorem}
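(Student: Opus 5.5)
The plan is to deduce the uniform bound from Ribet's Theorem~\ref{thm:Kummer} for the fixed pair~$(A,K)$, using the classical ``commutator with a homothety'' trick from Kummer theory. The homotheties will come from Serre's Theorem~\ref{thm:Serre eAK}, and the arithmetic input will be Lemma~\ref{lem:Lang ne}. Throughout, $A'$, $P$ and $B=[P]^0$ are as in Theorem~\ref{thm:Kummer}. Replacing~$K$ by~$K(B)$ is harmless, since this is a fixed finite extension independent of~$E$; so we may assume that~$B$, and hence~$\wh{T}(B)$ as a Galois submodule, is defined over~$K$. Write~$N_K:=m(Gal(\ol{K}/K(A_{tors})))$, which has index~$k(A,K,\Gamma)<+\infty$ in~$\wh{T}(B)$.

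\textbf{Step 1: bounded loss over~$E$.} Let~$E/K$ have degree at most~$d$. Then~$Gal(\ol{K}/E(A_{tors}))$ has index at most~$d$ in~$Gal(\ol{K}/K(A_{tors}))$. On the latter group~$m$ is a group homomorphism into~$\Hom(\Gamma,\wh{T}(A))$, so~$N_E:=m(Gal(\ol{K}/E(A_{tors})))$ has index at most~$d$ in~$N_K$. Hence~$[\wh{T}(B):N_E]\leq d\cdot k(A,K,\Gamma)$.

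\textbf{Step 2: uniform homotheties over~$E$.} By Theorem~\ref{thm:Serre eAK}, $\lambda^{e}\in\rho_A(Gal(\ol{K}/K))$ for every~$\lambda\in\wh{\Z}^\times$, where~$e=e(A,K)$. The subgroup~$\rho_A(Gal(\ol{K}/E))$ has index at most~$d$ in this image. Consider the permutation action on its cosets, of which there are at most~$d$. The element~$\lambda^{e\cdot d!}=(\lambda^e)^{d!}$ acts trivially there, so it lies in~$\rho_A(Gal(\ol{K}/E))$. Thus~$e_d:=e\cdot d!$ works uniformly for all~$E$ with~$[E:K]\leq d$.

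\textbf{Step 3: the commutator trick.} Take~$\tau\in Gal(\ol{K}/E(A_{tors}))$ and~$\sigma\in Gal(\ol{K}/E)$ with~$\rho_A(\sigma)=\lambda^{e_d}$. The commutator~$\sigma\tau\sigma^{-1}\tau^{-1}$ lies in the derived subgroup of~$Gal(\ol{K}/E)$, so it fixes~$E^{ab}$. It also fixes~$A_{tors}$, so it lies in~$Gal(\ol{K}/E^{ab}(A_{tors}))$. The cocycle relation gives~$m(\sigma\tau\sigma^{-1})=\rho_A(\sigma)m(\tau)$ on the kernel of~$\rho_A$, whence
\[
m(\sigma\tau\sigma^{-1}\tau^{-1})=(\lambda^{e_d}-1)\cdot m(\tau).
\]
Summing over~$\lambda$, the group~$m(Gal(\ol{K}/E^{ab}(A_{tors})))$ is a closed~$\wh{\Z}$-submodule containing~$\sum_\lambda(\lambda^{e_d}-1)N_E$. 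By Lemma~\ref{lem:Lang ne}, it therefore contains~$n_{e_d}\cdot N_E$.

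\textbf{Step 4: conclusion.} It follows that
\[
k(A,E^{ab},\Gamma)\leq [\wh{T}(B):n_{e_d}N_E]\leq n_{e_d}^{2\dim B}\cdot d\cdot k(A,K,\Gamma)=:\kappa(A,K,\Gamma;d),
\]
which depends only on~$A$, $K$, $\Gamma$ and~$d$.

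\textbf{Main obstacle.} I expect the delicate point to be bookkeeping rather than a new idea. One must check that~$m$ restricted to~$Gal(\ol{K}/E^{ab}(A_{tors})\cdot K(B))$ really lands in~$\wh{T}(B)$, as given by Theorem~\ref{thm:Kummer}. One must also justify the passage from~$K$ to~$K(B)$ at the start, and confirm that~$\wh{\Z}$-module closure is legitimate, i.e.\ that the image is closed and stable under multiplication by~$\wh{\Z}$; this holds since it is a compact subgroup of a profinite~$\wh{\Z}$-module. If Step~2 turned out problematic, the fallback would be Theorem~\ref{thm:Serre hybrid}, which provides uniform exponents on finite extensions directly.
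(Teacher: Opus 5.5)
Your proof is correct, and it follows a genuinely different route from the paper's. The paper does not invoke Ribet's theorem for $(A,K)$; it reproves a uniform version of it.

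\textbf{The paper's route.} Set $G=\Gal(\ol{K}/E)$ and let $H=\Gal(\ol{K}/E^{ab}(A_{tors}))$, the kernel of $G\to\Aut(\wh{T}(A))\times G^{ab}$. Proposition~\ref{prop:412} uses inflation--restriction and Sah's lemma for the central Serre homothety $\sigma^{[E:K]!}$. This shows that $B(E)/NB(E)\to\Hom(H,B[N])$ has kernel killed by $n_{e(A,K)[E:K]!}$. Together with Mordell--Weil and Proposition~\ref{prop:411}, this gives the non-divisibility criterion~\eqref{eq:0}. Theorem~\ref{thm:uniform Faltings} then makes $m_P(H)$ a module over an order of controlled index in the commutant of $\End(B)$. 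Finally, the lattice result Theorem~\ref{thm:D1} turns~\eqref{eq:0} into $m_P(H)\supseteq C(S,R,T)\cdot c(P,B,K)\cdot[E:K]\cdot\wh{T}(B)$.

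\textbf{Comparison.} Your identity $m(\sigma\tau\sigma^{-1}\tau^{-1})=(\lambda^{e_d}-1)\,m(\tau)$ is the group-theoretic counterpart of that Sah-lemma step. You apply it directly to the image of $m$ rather than in cohomology. Ribet's finiteness for the single finitely generated field $K$ then does all the work that the paper redoes with Faltings' theorem and Appendix~\ref{App:B}.

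Your route is much shorter. It shows that both the uniformity in $E$ and the passage from $E(A_{tors})$ to $E^{ab}(A_{tors})$ are formal once $k(A,K,\Gamma)<+\infty$ is known. The price is the constant $n_{e\cdot d!}^{2\dim B}\cdot d\cdot k(A,K,\Gamma)$: it contains the inexplicit $k(A,K,\Gamma)$ and grows very fast in $d$. The paper's route aims instead at the explicit shape~\eqref{eq:Kummer explicite}, which it needs for the comparison with R\'emond's theorem in~\S\ref{sec:CD}.

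\textbf{Two small points.}
\begin{enumerate}
\item No passage to $K(B)$ is needed. $B=[P]^0$ is the identity component of an algebraic group defined over $K$, so it is itself defined over $K$.
\item Lemma~\ref{lem:Lang ne} must be read with $\lambda\in\wh{\Z}^\times$; this is what its proof and your Step~3 actually use. In fact a single well-chosen $\lambda$ already gives $(\lambda^{e}-1)\wh{\Z}\supseteq n_e\wh{\Z}$. Take $\lambda_\ell=1+\ell$ when $\ell-1\mid e$, and $\lambda_\ell$ with $\lambda_\ell^e\not\equiv 1\pmod{\ell}$ otherwise.
\end{enumerate}
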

We will prove Theorem~\ref{thm:uniform Kummer2} in~\S\S\ref{sec:proof 4.10}. We first deduce Theorem~\ref{thm:uniform Kummer1}.

\begin{proof}[Proof of Theorem~\ref{thm:uniform Kummer1} assuming Theorem~\ref{thm:uniform Kummer2}]
Let~$E'$ be the reflex field of the CM-type of~$C$ and let~$E=E'\cdot K$. One has~$[E:K]\leq d:=2^{\dim(C)}\cdot \dim(C)!$, and~$K(C,C_{tors})\subseteq E^{ab}$.
Therefore~$\ker(\rho_{A\times C})=K(C,(A\times C)_{tors})=K(C,C_{\tors},A_{tors})
\subseteq E^{ab}(A_{tors})$. We deduce
\[
m_{A,\Gamma}(Gal(\ol{K}/E^{ab}(A_{tors}))
\subseteq 
m_{A\times C,\Gamma}(\ker(\rho_{A\times C}))
\subseteq 
\wh{T}(B)
\] Therefore~\eqref{eq:unifKummer2} implies~\eqref{eq:unifKummer1} with
\[k(A\times C,K(C_{tors}),\Gamma):= \kappa(A,K,\Gamma;2^{\dim(C)}{\dim(C)}!).\qedhere\]
\end{proof}

\subsubsection{Preliminaries} We adapt~\cite[App. 2]{Hindry}.

\begin{proposition}\label{prop:411}
Let~$A$ be an abelian variety over an extension~$K/\Q$ and let~$N\in\Z_{\geq1}$. There is an injective homomorphism
\[
A(K)/N\cdot A(K)\to H^1(Gal(\ol{K}/K); A[N])
\]
such that for every~$P\in A(K)$ and~$Q\in A(\ol{K})$ such that~$N\cdot Q=P$, the image of~$P+NA(K)$ is the cohomology class of the cocycle~$\sigma\mapsto \sigma(Q)-Q:Gal(\ol{K}/K)\to A[N]$. 

Let~$E/K$ be a finite extension. Then the kernel of~$A(K)/N\cdot A(K)\to A(E)/N\cdot A(E)$ is annihilated by~$[E:K]$.
\end{proposition}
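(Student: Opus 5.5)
The plan is to use the Kummer sequence of $\Gal(\ol{K}/K)$-modules
\[
0\to A[N]\to A(\ol{K})\xrightarrow{\;N\;} A(\ol{K})\to 0,
\]
which is exact because multiplication by $N$ is surjective on $A(\ol{K})$ (we are in characteristic zero and $\ol{K}$ is algebraically closed). Taking Galois cohomology gives the long exact sequence
\[
A(K)\xrightarrow{N}A(K)\xrightarrow{\delta}H^1(\Gal(\ol{K}/K);A[N])\to H^1(\Gal(\ol{K}/K);A(\ol{K})).
\]
Hence $\delta$ induces an injective homomorphism $A(K)/N\cdot A(K)\to H^1(\Gal(\ol{K}/K);A[N])$.

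Unwinding the definition of the connecting map identifies the cocycle. For $P\in A(K)$, pick $Q\in A(\ol{K})$ with $NQ=P$. Then $\sigma\mapsto\sigma(Q)-Q$ takes values in $A[N]$, since $N(\sigma Q-Q)=\sigma P-P=0$. It is a cocycle, since $\sigma\tau Q-Q=\sigma(\tau Q-Q)+(\sigma Q-Q)$. Its class is $\delta(P)$.

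The class is independent of the choice of $Q$. If $Q'$ is another choice, then $Q'=Q+t$ with $t\in A[N]$, and the two cocycles differ by the coboundary $\sigma\mapsto\sigma t-t$. Injectivity can also be checked directly. If $\sigma Q-Q=\sigma t-t$ for some $t\in A[N]$, then $Q-t$ is Galois-invariant, so $Q-t\in A(K)$ (this uses that $\ol{K}^{\Gal}=K$, which holds in characteristic zero). Then $N(Q-t)=P$, so $P\in NA(K)$.

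For the second assertion, use restriction and corestriction. By Hilbert 90 in the form $\ol{K}^{\Gal(\ol{K}/E)}=E$, the construction above is functorial, so restriction gives a commutative square:
\[
A(K)/NA(K)\hookrightarrow H^1(K;A[N]),\qquad A(E)/NA(E)\hookrightarrow H^1(E;A[N]),
\]
connected by the natural map and by $\mathrm{res}$. Let $P$ lie in the kernel of $A(K)/NA(K)\to A(E)/NA(E)$. Then $\mathrm{res}\,\delta_K(P)=0$. If $E/K$ is separable, which is automatic in characteristic zero, we have $\mathrm{cor}\circ\mathrm{res}=[E:K]$ on $H^1(\Gal(\ol{K}/K);A[N])$. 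Therefore $[E:K]\delta_K(P)=0$, and by injectivity $[E:K]P\in NA(K)$.

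An elementary alternative avoids corestriction. If $P=NR$ with $R\in A(E)$, take the trace $\mathrm{Tr}(R)=\sum_\sigma\sigma(R)\in A(K)$, summing over the $[E:K]$ embeddings of $E$ into $\ol{K}$ over $K$. Then $N\,\mathrm{Tr}(R)=[E:K]P$. This directly shows $[E:K]P\in NA(K)$.

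The only subtle point is the correctness of the cohomological bookkeeping: the identification of the connecting map with the explicit cocycle, and the use of invariants $\ol{K}^{\Gal(\ol{K}/E)}=E$. These are standard. I expect no real obstacle. I would present the trace argument for the second part, since it is the most transparent.
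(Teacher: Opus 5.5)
Your proof is correct. For the first assertion you give the standard Kummer-sequence argument, which the paper does not write out but simply cites from Hindry (App.~2, Lem.~F).

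For the second assertion, your restriction--corestriction argument is exactly the paper's. The paper reduces the claim to the fact that the kernel of $\mathrm{res}\colon H^1(\Gal(\ol{K}/K);A[N])\to H^1(\Gal(\ol{K}/E);A[N])$ is killed by $[E:K]$, citing Serre's \emph{Cohomologie galoisienne}, I.2.4, Prop.~9. That fact is $\mathrm{cor}\circ\mathrm{res}=[E:K]$.

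The trace argument you say you would actually present is a genuinely different and more elementary route. It is the same identity $\mathrm{cor}\circ\mathrm{res}=[E:K]$, but in degree $0$, i.e.\ the norm $A(E)\to A(K)$ applied directly to points. It therefore needs neither the Kummer embedding nor its injectivity. You should make explicit why $\mathrm{Tr}(R)$ lies in $A(K)$: each $\tau\in\Gal(\ol{K}/K)$ permutes the $K$-embeddings $E\to\ol{K}$, so $\mathrm{Tr}(R)$ is $\Gal(\ol{K}/K)$-invariant.

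The paper's cohomological version buys a stronger statement: restriction kills, up to $[E:K]$, all of $H^1(\Gal(\ol{K}/K);A[N])$, not only the Kummer classes. It also meshes with the $H^1$ bookkeeping used right afterwards in Proposition~\ref{prop:412} and Corollary~\ref{cor:613}. For the statement as posed, the trace argument suffices.

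One terminological slip: $\ol{K}^{\Gal(\ol{K}/E)}=E$ is the Galois correspondence (Artin's theorem), not Hilbert~90.
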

\begin{proof}See~~\cite[App. 2, Lem.~F]{Hindry} for the first statement. Let us prove the last statement. We can embed~$A(K)/N\cdot A(K)$ in~$H^1(Gal(\ol{K}/K); A[N])$ and~$A(E)/N\cdot A(E)$ in~$H^1(Gal(\ol{K}/E); A[N])$. The map~$A(K)/A(K)\to A(E)/N\cdot A(E)$ is induced by the restriction map~$H^1(Gal(\ol{K}/K); A[N])\to H^1(Gal(\ol{K}/E); A[N])$. It suffices to show that the kernel of the latter is annihilated by~$[E:K]$. This is~\cite[I\S\, 2.4, Prop. 9]{LNM5}.
\end{proof}
\begin{proposition}\label{prop:412}
 We consider the context of Theorem~\ref{thm:Kummer}. Let~$E/K$ be a finite extension and let~$n:=n_{e(A,K)\cdot [E:K]!}$ in the notations of Lemma~\ref{lem:Lang ne}. Let~$G'$ and~$H$ be the image and kernel of~$G:=Gal(\ol{K}/E)\to Aut(\wh{T}(A))\times G^{ab}$. 
\begin{enumerate}
\item \label{4121}
For every~$N\in\Z_{\geq1}$, we have a short exact sequence
\begin{equation}\label{eq:truccc}
1\to H^1(G'; B[N]) \to H^1(G; B[N])\to H^1(H;B[N])
\end{equation}
and a natural identification~$H^1(H;B[N])\simeq Hom(H,B[N])$.
\item \label{4122}
For every~$N\in\Z_{\geq1}$, the group~$H^1(G'; B[N])$ is annihilated by~$n$.
\item \label{4123}
For every~$Q\in B(E)$ and~$N\in\Z_{\geq1}$, we have
\[
n\cdot Q\not\in N\cdot B(E)\Rightarrow m_Q(H)\not\subseteq N\cdot \wh{T}(B).
\]
where~$m_Q\in Hom(H,B[N])$ is such that the image of~$Q$ by
\[
B(E)\to B(E)/NB(E)\hookrightarrow H^1(G;B[N])\to H^1(H;B[N])\simeq Hom(H,B[N]).
\]
is the map
\[
H\xrightarrow{m_Q} \wh{T}(B)\to \wh{T}(B)\tens \Z/(N)\simeq B[N].
\]
\end{enumerate}
\end{proposition}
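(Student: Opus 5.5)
The plan is to follow Hindry's presentation \cite[App. 2]{Hindry} of Ribet's argument. We track the constants explicitly and insert Sah's lemma at the point where homotheties are needed.

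For \eqref{4121}, $H$ is the kernel of $G\to G'$, hence a closed normal subgroup of $G$. We apply the inflation--restriction exact sequence to the $G$-module $B[N]$:
\[
0\to H^1(G/H;B[N]^H)\to H^1(G;B[N])\to H^1(H;B[N]).
\]
Here $B$ is defined over $E$ once $E\supseteq K(B)$, which we may arrange since this is the context of Theorem~\ref{thm:Kummer}. Then $B[N]\subseteq A'[N]$, and $H$ acts trivially on it because $H$ lies in $\ker\rho_A$. So $B[N]^H=B[N]$ and $G/H\simeq G'$. For the same reason, the action of $H$ on $B[N]$ is trivial, so $H^1(H;B[N])=\Hom(H,B[N])$ (continuous homomorphisms). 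This gives the sequence \eqref{eq:truccc} and the identification.

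For \eqref{4122}, I would use Sah's lemma \cite[App. 2, Lem.~D]{Hindry}: if $z$ is central in $G'$ and acts on a module $M$, then $z-1$ annihilates $H^1(G';M)$. Theorem~\ref{thm:Serre eAK}, applied over $K$, gives $\lambda^{e}$ in the image of $\rho_A$ for every $\lambda\in\wh{\Z}^\times$, with $e=e(A,K)$. Passing to the subgroup $\mathrm{Gal}(\ol{K}/E)$ of index $[E:K]$, we get $\lambda^{e\cdot[E:K]!}\in\rho_A(G)$: the index of the image is at most $[E:K]$, so the $[E:K]!$-th power of any element lands in it. This step, via Theorem~\ref{thm:Serre hybrid}, is also where the dependence on $A$ alone enters.

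We also need this homothety to be central in $G'\le \Aut(\wh T(A))\times G^{ab}$, not merely in the first factor. So I would choose $\sigma\in G$ with $\rho_A(\sigma)=\lambda^{e[E:K]!}$; its image $(\lambda^{e[E:K]!},\bar\sigma)$ is central in $G'$, because homotheties are central and $G^{ab}$ is abelian. This is exactly the reason for including $G^{ab}$ in the definition of $G'$. The element $\sigma$ acts on $B[N]$ by the scalar $\lambda^{e[E:K]!}$, so Sah's lemma shows that $\lambda^{e[E:K]!}-1$ kills $H^1(G';B[N])$ for all $\lambda$. Summing these ideals and using Lemma~\ref{lem:Lang ne}, we conclude that $n=n_{e[E:K]!}$ kills $H^1(G';B[N])$. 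Two details need checking here: that $\lambda$ can be taken in $\wh{\Z}^\times$ rather than in $\Z^\times$, as the lemma is written, and that the passage to profinite groups with continuous cochains is legitimate. I expect this step to be the main obstacle.

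For \eqref{4123}, I would argue by contraposition. Suppose $m_Q(H)\subseteq N\wh T(B)$. Then the image of $Q$ in $\Hom(H,B[N])$ vanishes. By \eqref{4121}, the class of $Q$ in $H^1(G;B[N])$ therefore comes from $H^1(G';B[N])$, and by \eqref{4122} it is killed by $n$. The class of $nQ$ is then zero in $H^1(G;B[N])$. By the injectivity in Proposition~\ref{prop:411}, applied to $B$ over $E$, this gives $nQ\in N\cdot B(E)$, which is the contrapositive of \eqref{4123}. Finally, one checks that the Kummer cocycle $\sigma\mapsto\sigma(Q')-Q'$, restricted to $H$, agrees with the reduction mod $N$ of $m_Q$ as defined via the affine Tate module. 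This follows directly from the definitions of $m$ and of $\wh T_{A,\Gamma}$.
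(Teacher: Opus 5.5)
Your proposal is correct and follows the paper's proof step by step. Part (1) is inflation--restriction. Part (2) uses Theorem~\ref{thm:Serre eAK} with the $[E:K]!$-power trick, centrality of the resulting element in $G'$, Sah's lemma and Lemma~\ref{lem:Lang ne}. Part (3) is the kernel-annihilation argument that the paper packages as Lemma~\ref{lem:liftkernel}, combined with the injectivity of the Kummer map from Proposition~\ref{prop:411}.

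The details you flag are harmless. The $\Z^\times$ in Lemma~\ref{lem:Lang ne} is a typo for $\wh{\Z}^\times$, as its proof shows, and continuous cohomology with finite discrete coefficients causes no trouble. A few asides are slightly off:
\begin{enumerate}
\item $B$ is automatically defined over $K$, since $P\in A'(K)$, so nothing needs to be arranged.
\item The factor $G^{ab}$ is not there to obtain centrality; it is there so that $H$ corresponds to $E^{ab}(A_{tors})$, as needed in Theorem~\ref{thm:uniform Kummer2}.
\item Theorem~\ref{thm:Serre hybrid} plays no role in this proposition.
\end{enumerate}
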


\begin{proof}[Proof of Proposition~\ref{prop:412}]Let us prove~\eqref{4121}. Note that~$H$ acts trivially~$B[N]$. This implies $B[N]^H=B[N]$, and allows us to deduce the exact sequence~\eqref{eq:truccc} from~\cite[Ch.\,I, \S2.6 b)]{LNM5}. This trivial action also implies the identification.

Let us prove~\eqref{4122}.
By Lemma~\ref{lem:Lang ne}, it is enough to prove the claim that for every~$\lambda \in\wh{\Z}^\times$, the group~$H^1(H;A[N])$ is annihilated by~$\lambda^{e'}-1$ with~$e':=e(A,K)\cdot [E:K]!$.
\begin{proof}[Proof of the claim]By Theorem~\ref{thm:Serre eAK}, there exists~$\sigma\in Gal(\ol{K}/K)$ such that~$\sigma$ acts by~$\lambda^{e(A,K)}$ on~$\wh{T}(A)$. Therefore~$\sigma':=\sigma^{[E:K]!}\in Gal(\ol{K}/E)$ and acts by~$\lambda^{e'}$.  Therefore~$\rho_A(\sigma')$ is central in~$Aut(\wh{T}(A))\geq \rho_A(Gal(\ol{K}/K))$. But the image of~$\sigma$ in~$Gal(\ol{K}/E)^{ab}$ is obviously central. Therefore the image of~$\sigma'$ in~$Aut(\wh{T}(A))\times G^{ab}\geq G'$ is central. Note that~$\sigma'$ acts by~$\lambda^{e'}$ on~$B[N]$, and thus by~$\lambda^{e'}$ on~$H^1(H;B[N])$. By Sah's~\cite[App. 2, Lem.~D]{Hindry}, the action of~$\sigma'$ on~$H^1(H;B[N])$ is trivial. This proves the claim.
\end{proof}

Let us prove \eqref{4123}. We can embed~$B(E)/N\cdot B(E)$ in~$H^1(G;B[N])$. By Lemma~\ref{lem:liftkernel} for~$\phi:H^1(G;B[N])\to H^1(H;B[N])$, and~$x=Q+N\cdot B(E)\in B(E)/N\cdot B(E)\leq H^1(G;B[N])$, we have
\[
n\cdot Q+N\cdot B(E)\neq N\cdot B(E)\Rightarrow \phi(Q+N\cdot B(E))\neq 0.
\]
We observe that~$\phi(Q+N\cdot B(E))\in Hom(H,B[N])$ is the image of~$m_Q \in Hom(H,\wh{T}(B))$. We deduce
\[
\phi(Q+N\cdot B(E))\neq 0\Leftrightarrow m_Q(H)\not\subseteq N\cdot \wh{T}(B).
\]
The conclusion follows.

Proposition~\ref{prop:412} is proven.
\end{proof}

\begin{corollary}\label{cor:613}
Let~$\Theta\leq A(K)$ be a subgroup. There exists~$c':=c'(\Theta,A(K))$ such that
\[
\forall N\in\Z_{\geq0},\ker\bigl(\Theta/N\Theta\to A(E)/NA(E)\bigr)
\]
is annihilated by~$c'\cdot [E:K]$.

For all~$N\in\Z_{\geq0}$, the kernel of~$\Theta/N\Theta\to Hom(H,B[N])$ is annihilated by~$c'\cdot [E:K]\cdot n$.
\end{corollary}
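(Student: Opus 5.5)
The plan has two parts: first the kernel of \(\Theta/N\Theta\to A(E)/NA(E)\), then the composite map to \(Hom(H,B[N])\). Write the composite as
\[
\Theta/N\Theta\;\xrightarrow{\;\alpha\;}\;A(K)/NA(K)\;\xrightarrow{\;\beta\;}\;A(E)/NA(E).
\]
By Proposition~\ref{prop:411}, \(\ker\beta\) is annihilated by \([E:K]\). So it suffices to find \(c'\), independent of \(N\), annihilating \(\ker\alpha\).

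By Theorem~\ref{thm:MWN}, \(A(K)\) is finitely generated. Hence \(\Theta\) is finitely generated, and \(\Theta\) has finite index in its saturation
\[
\Theta^{sat}:=\{x\in A(K)\mid \exists m\geq1,\ mx\in\Theta\}.
\]
Choose a decomposition \(A(K)=\Theta^{sat}\oplus F\) with \(F\) free; this is possible since \(A(K)/\Theta^{sat}\) is torsion free and finitely generated. Then \(N A(K)\cap\Theta^{sat}=N\Theta^{sat}\). Set \(c':=[\Theta^{sat}:\Theta]\), which is finite.

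Now take \(\theta\in\Theta\cap NA(K)\). Then \(\theta=Nx\) with \(x\in\Theta^{sat}\), so \(c'x\in\Theta\) and \(c'\theta=N(c'x)\in N\Theta\). Thus \(c'\) kills \(\ker\alpha\).

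For any \(y\in\ker(\beta\circ\alpha)\), we have \(\alpha(y)\in\ker\beta\), so \([E:K]\,y\in\ker\alpha\). Applying \(c'\) then gives \(c'[E:K]\,y=0\), which proves the first claim.

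For the second claim, compose further:
\[
A(E)/NA(E)\hookrightarrow H^1(G;B[N])\to H^1(H;B[N])\simeq Hom(H,B[N]).
\]
Here I read the map out of \(\Theta\) as landing in \(B\), i.e. I take \(\Theta\leq B(K)\), or replace \(A\) by \(B\), which is how it is used with \(\Theta\subseteq[P]\) in the context of Theorem~\ref{thm:Kummer}. If \(\Theta\) only lies in \([P]\), a bounded multiple lands in \(B\), and that index can be absorbed into \(c'\).

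The step I expect to need the most care is controlling the kernel of \(B(E)/NB(E)\to Hom(H,B[N])\). By Proposition~\ref{prop:412}\eqref{4121} and \eqref{4122}, this kernel lies in the image of \(H^1(G';B[N])\), which is annihilated by \(n\). Concretely, if \(Q\in B(E)\) maps to zero, then \(m_Q(H)\subseteq N\wh{T}(B)\), and Proposition~\ref{prop:412}\eqref{4123} gives \(nQ\in NB(E)\).

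Finally, chain the three annihilators. If \(y\in\Theta/N\Theta\) maps to zero in \(Hom(H,B[N])\), then its image in \(B(E)/NB(E)\) is killed by \(n\). So \(ny\) lies in the kernel of the first map, and that kernel is killed by \(c'[E:K]\). Hence \(c'[E:K]n\,y=0\), with \(c'\) independent of \(N\) and \(E\).
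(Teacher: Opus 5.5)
Your proposal is correct and follows the paper's proof. Both arguments factor the map through $A(K)/NA(K)$ and $A(E)/NA(E)$, use Mordell–Weil to get a saturation constant $c'$, take $[E:K]$ from Proposition~\ref{prop:411} and $n$ from Proposition~\ref{prop:412}, and chain the annihilators of the successive kernels. The differences are cosmetic: you take $c'=[\Theta^{sat}:\Theta]$ where the paper takes the exponent of $(A(K)/\Theta)_{tors}$, and you group the three kernels in a different order. Your reading that the second part is applied with $A$ replaced by $B$ (so $\Theta\le B(K)$) is exactly how the paper uses the corollary in Proposition~\ref{prop:preliminaire}.
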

\begin{proof} By Proposition~\ref{prop:411}, the kernel~$K_1$ of~$A(K)/NA(K)\to A(E)/NA(E)$ is annihilated by~$[E:K]$. By Proposition~\ref{prop:412} the kernel~$K_2$ of~$A(E)/NA(E)\to  Hom(H,B[N])$ is annihilated by~$n$. The kernel of~$A(K)/NA(K)\to  Hom(H,B[N])$ is an extension~$K_3$ of~$K_2$ by~$K_1$. It follows that~$K_3$ is annihilated by~$n\cdot [E:K]$.

Let~$c'$ be the exponent of~$(A(K)/\Theta)_{tors}$. Theorem~\ref{thm:MWN} implies that the group~$A(K)/\Theta$ is finitely generated, and thus~$c'<+\infty$. Therefore~$x\in A(K), \exists N, N\cdot x\in \Theta\Rightarrow c\cdot x\in \Theta$. Thus~$x\in N\cdot A(K)\Rightarrow c\cdot x\in N\cdot \Theta$. Equivalently, the kernel~$K_4$ of~$\Theta/N\Theta\to A(K)/NA(K)$ is annihilated by~$c'$. The kernel of~$\Theta/N\Theta\to Hom(H,B[N])$ is an extension of~$K_3$ by~$K_4$, and is thus annihilated by~$c'\cdot [E:K]\cdot n$.
\end{proof}
\begin{proposition}\label{prop:preliminaire}
The map
\begin{equation}\label{eq:100}
\alpha\mapsto\alpha(P):\End(B)\mapsto \Theta:=\End(B)\cdot P
\end{equation}
is bijective.

There exists~$c=c(P,B,K)$ such that
\begin{equation}\label{eq:0}
\forall N\in\Z_{\geq1}, \alpha\not\in N\cdot \End(B) \Rightarrow m_{\alpha(P)}(H)\not\subseteq c\cdot [E:K]\cdot N\cdot \wh{T}(B).
\end{equation}
\end{proposition}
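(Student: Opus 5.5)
The proof splits into two parts: first the bijectivity of \eqref{eq:100}, then \eqref{eq:0}, which we obtain from Corollary~\ref{cor:613} applied to $\Theta$.

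\textbf{Bijectivity.} After replacing $P$ by a suitable multiple, we may assume $P\in B$, where $B=[P]^0$; we do this by replacing $\Gamma$ with a finite-index subgroup, which only changes constants. Surjectivity of $\alpha\mapsto\alpha(P)$ onto $\Theta$ holds by definition.

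For injectivity, suppose $\alpha(P)=0$. Then $\ker\alpha$ is an algebraic subgroup of $B$ containing $P$, hence containing $\ol{\Z P}^{Zar}\cap B$. This closure has finite index in $B$, and in fact equals $B$ once $P$ lies in the neutral component. Therefore $\ker\alpha\supseteq B$, so $\alpha=0$.

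\textbf{The estimate \eqref{eq:0}.} Fix $N$ and $\alpha\notin N\End(B)$. We argue by contraposition: suppose $m_{\alpha(P)}(H)\subseteq cN[E:K]\wh{T}(B)$.

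By Proposition~\ref{prop:412}\eqref{4123}, read in reverse, the image of $\alpha(P)$ in $\Hom(H,B[M])$ vanishes for $M=c[E:K]N$ (up to the factor $n$). Corollary~\ref{cor:613} applied to $\Theta=\End(B)\cdot P$ then gives
\[
c'[E:K]n\cdot\alpha(P)\in M\Theta .
\]
By the bijectivity just proved, this translates into
\[
c'[E:K]n\,\alpha\in c[E:K]N\End(B).
\]
Choosing $c:=c'n$, we can cancel the common factor $c'[E:K]n$ because $\End(B)$ is torsion-free. This yields $\alpha\in N\End(B)$, a contradiction.

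\textbf{Main obstacle.} The difficulty is the dependence of $n$ on $[E:K]$: in Proposition~\ref{prop:412}, $n=n_{e(A,K)[E:K]!}$. So $c$ must either absorb $n$, which is then uniform only for bounded degree, or we must apply the result to the divided element directly. The cleaner route is to apply Proposition~\ref{prop:412}\eqref{4123} to $Q=\alpha(P)/\!\!/$, namely writing $c'[E:K]n\cdot\alpha(P)$ and tracking divisibility in $\Theta$. I would first try to allow $c$ to depend on $n$ through $[E:K]$ within the stated framework, and then check that $c$ depends only on $(P,B,K)$ for bounded $[E:K]$.

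A second point requires care. The map $\alpha\mapsto m_{\alpha(P)}$ is $\End(B)$-linear, since $m_{\alpha(P)}=\alpha\circ m_P$. Because of this, divisibility of the image by $M$ in $\wh{T}(B)$ is exactly the statement that the class vanishes in $\Hom(H,B[M])$, which is what makes the contraposition above valid.
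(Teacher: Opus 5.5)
Your proposal is correct and follows the paper's proof essentially verbatim. Injectivity holds because $P$ generates $B$. Then Corollary~\ref{cor:613} is applied at level $c[E:K]N$ (the paper phrases this as substituting $N$ by $n'N$), followed by cancellation in the torsion-free group $\End(B)$. Your caveat is accurate: the paper's argument likewise produces the factor $c'(\Theta,B(K))\cdot[E:K]\cdot n$ with $n=n_{e(A,K)[E:K]!}$, so the constant genuinely depends on $[E:K]$. It is therefore uniform only for bounded degree, by taking a common multiple over $[E:K]\le d$, and that is all the proof of Theorem~\ref{thm:uniform Kummer2} uses.
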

\begin{proof}Recall that~$B$ is the algebraic group generated by~$P$. Therefore, for~$\alpha\in \End(B)$ we have~$\alpha(P)=0\Rightarrow \alpha=0$. This proves the injectivity of~\eqref{eq:100}. The surjectivity of~\eqref{eq:100} is immediate.

Let~$c=c(P,B,K):=c'(\Theta,B(K))$ and~$n':=c\cdot [E:K]\cdot n$.
By Corollary~\ref{cor:613}, we have
\begin{equation}\label{eq:1}
n'\cdot \alpha\not\in N\cdot \End(B)\Rightarrow m_{\alpha(P)}(H)\not\subseteq N\cdot \wh{T}(B).
\end{equation}
Substituting~$N$ with~$n'\cdot N$, we obtain
\begin{equation}\label{eq:2}
n'\cdot \alpha\not\in n'\cdot N\cdot \End(B)\Rightarrow m_{\alpha(P)}(H)\not\subseteq n'\cdot N\cdot \wh{T}(B).
\end{equation}

Recall that~$\End(B)$ is a free module of finite rank over~$\Z$. Therefore~$n'\cdot \alpha\not\in n'\cdot N\cdot \End(B)\Leftrightarrow \alpha\not\in N\cdot \End(B)$. We can thus deduce~\eqref{eq:0} from~\eqref{eq:2}.
\end{proof}
\subsubsection{Proof of Theorem~\ref{thm:uniform Kummer2}}\label{sec:proof 4.10}
 We combine Poposition~\ref{prop:preliminaire} with Falting's Theorem in the form~\ref{thm:uniform Faltings} and Theorem~\ref{thm:D1} from Appendix~\ref{app:D}.

Let~$\wh{T}$ be the~$\wh{\Z}$-linear Tate module of~$B$, let~$R=\End(B)$, and let~$\wh{S'}$ be the commutant
of~$R$ in~$\End_{\wh{\Z}}(\wh{T})$. Let~$F(A,K,[E:K])$ be as in Theorem~\ref{thm:uniform Faltings}, and let~$\wh{S}:=\wh{\Z}\cdot Id_{\wh{T}}+F(A,K,[E:K])\cdot \wh{S'}$. According to Theorem~\ref{thm:uniform Faltings},
we have
\[
\wh{S}\subseteq \wh{\Z}\left[\rho_{B}(Gal(\ol{K}/E))\right].
\]
Let~$R^0:=R\tens \Q$. Choose an embedding~$\ol{K}\to \C$, and define~$T_\Z:=H^1(B(\C);\Z)$ and~$T_\Q:=T_\Z\tens\Q$.
Let~$S'$ be the commutant of~$R$ in~$\End_\Z(T_\Z)$ and~$S:=\Z\cdot Id_{T_\Z}+F(A,K,[E:K])\cdot S'$. Then~$\wh{S}\simeq S\tens \wh{\Z}$. Note that~$R$ is an order of the semisimple algebra~$R^0$ and~$S$ is an order of the semisimple algebra~$S^0:=S'\tens\Q$. 

Let~$L$ and~$C(S,R,T)$ be as in Theorem~\ref{thm:D1}. Let~$c(P,B,K)$ be as in Proposition~\ref{prop:preliminaire} and let us define~$n:=c\cdot [E:K]$. Let~$\wh{W}:=m_{P}(H) \leq \wh{T}$. Note that~$m_{\alpha(P)}=\alpha\circ m_{P}(H)$ and therefore
\[
m_{\alpha(P)}(H)\subseteq n \cdot \wh{T}(B)\Leftrightarrow
\alpha(\wh{W})\subseteq n\cdot \wh{T}.
\]

Let~$\ell$ be a prime. We claim that~$W_\ell:=\wh{W}\tens_{\wh{\Z}}\Z_\ell\leq T_\ell:=\wh{T}\tens\Z_\ell$ is open in~$T_\ell$.
\begin{proof} Assume for contradiction that~$U:=W_\ell\tens_{\Z_\ell} \Q_\ell\neq T_\ell\tens_{\Z_\ell} \Q_\ell$. Recall that~$U$ is stable under the action of~$Gal(\ol{E}/E)$. By Faltings' theorem, there exists~$\alpha\in R\tens \Z_\ell$ such that~$\ker(\alpha)=U$. We have~$\alpha\neq 0$ by assumption.
Let~$\alpha_i$ be a sequence in~$R$ converging, in~$R\tens\Z_\ell$, to~$\alpha$. We may assume~$\alpha_i\neq 0$: there exists~$N_i\in\Z_{\geq1}$ such~$\alpha_i\not\in N_i\cdot R$. As~$\alpha_i|_W$ converges to the constant map~$0:W\to T$, ~\eqref{eq:0} implies~$N_i\to 0$ in~$\Q_\ell$ as~$i\to+\infty$. Therefore~$\alpha$ belongs to~$\bigcap N_i\cdot R\tens\Z_\ell=\{0\}$. Therefore~$\alpha=0$ and~$U=\ker(\alpha)=T_\ell \tens_{\Z_\ell} \Q_\ell$, which is a contradiction.
\end{proof}
Let~$W'_\ell\leq T$ be the intersection~$T\cap W_\ell$ in~$T_\ell$. Then~$W'_\ell\leq T$ is of finite index and~$W_\ell\simeq W'_\ell\tens\Z_\ell$.

By Theorem~\ref{thm:D1}, we have
\[
W'_\ell\geq C(S,R,T)\cdot n\cdot T.
\]
Tensoring by~$\Z_\ell$, we get
\[
W_\ell\geq C(S,R,T)\cdot n\cdot T_\ell.
\]

As this is true for every prime~$\ell$, we conclude
\[
\wh{W}:={\prod}_\ell W_\ell\geq {\prod}_\ell C(S,R,T)\cdot n\cdot T_\ell=C(S,R,T)\cdot n\cdot \wh{T}.
\]
This proves Theorem~\ref{thm:uniform Kummer2} with
\begin{equation}\label{eq:Kummer explicite}
\kappa(A,\Gamma;[E:K])=c(P,B,K)\cdot [E:K]\cdot C(S,R,T).
\end{equation}

\subsection{Comparison with a Theorem of Rémond}\label{sec:CD}
The formula~\eqref{eq:Kummer explicite} is an explicit form of Theorem~\ref{thm:uniform Kummer2}. From the proofs of Proposition~\ref{prop:preliminaire} Corollary~\ref{cor:613}, we see that the quantity~$c(P,B,K)$ is the exponent $c'(\End(B)\cdot P,B(K))$ of~$(B(K)/\End(B)\cdot P)_{tors}$. We observe that the quantity
\[
(G(K)\cap (\End(G)\cdot P)_{div})/\End(G)\cdot P
\] 
from~\cite[Theorem~17 (Rémond)]{CD} is equal to~$c'(\End(B)\cdot P,B(K))$, for~$G=A$ and~$B=G_P$.

Therefore, the explicit form~\eqref{eq:Kummer explicite} of Theorem~\ref{thm:uniform Kummer2} implies~\cite[Theorem~17 (Rémond)]{CD} for~$G$ an abelian variety (\cite{CD} also considers $G=A\times GL(1)^N$ for some~$N$).

Our result is stronger:
\begin{itemize}
\item formula~\eqref{eq:Kummer explicite} exhibits an explicit dependency on the field~$E$: the dependency is linear in~$[E:K]$ (cf. Proposition~\ref{prop:411});
\item our results are uniform when~$A$ ranges through a hybrid orbit of abelian varieties (see Theorem~\ref{thm:Gal En}).
\end{itemize}

\section{Proof of Theorem~\ref{thm:Main}}\label{sec:proof}

We consider the setting of Theorem~\ref{thm:Main} and the introduction. 
\subsection{}\label{sec:51}
Let~$A$ be an abelian variety over~$\C$ and let~$P\in A(\C)$. Let~$A_P:=(\ol{\Z\cdot P}^{Zar})^0$ be the abelian subvariety which is the neutral component of the algebraic subgroup generated by~$P$. Replacing~$P$ by the multiple~$n\cdot P$, where~$n=\#\pi_0(\ol{\Z\cdot P}^{Zar})$, we may assume~$\ol{\Z\cdot P}^{Zar}=A_P$.

Let~$S\subseteq \mathbf{A}_\Gamma$ be a special subvariety, and denote by~$\mathcal{A}:=\mathcal{A}_\Gamma|_S\to S$ the restriction to~$S$ of the abelian scheme~$\mathcal{A}_\Gamma\to\mathbf{A}_\Gamma$. 

Let~$\eta$ be a generic point of~$S$, and let~$A_\eta$ be the fiber at~$\eta$, which is an abelian variety over~$\C(S)$. We denote by~$A_{\ol{\eta}}$ the corresponding abelian variety over an algebraic and algebraically closed extension~$\ol{\C(S)}/\C(S)$. Passing to an étale cover~$\mathcal{A}_{\Gamma'}\to\mathcal{A}_{\Gamma}$ for a finite index congruence subgroup,
we have the following property
\begin{equation}
\End(A_\eta/\C(S))=\End(A_{\ol{\eta}}/\ol{\C(S)}).
\end{equation}
Equivalently, property~\eqref{eq:hypmonodromy} holds true for~$\mathcal{A}\to S$.

Let~$s_n=(A_n,\lambda_n)$ a generic sequence in~$S$, viewed as a sequence of polarised abelian varieties with~$\Gamma$-level structure. Let~$Q_n$ be a sequence of points in~$\mathcal{A}$ such that~$Q_n\in A_n$ for all~$n$. We consider~$\mathcal{V}:=\ol{\{Q_n\}}^{Zar(\mathcal{A})}$ and let~$\mathcal{C}$ be a geometric component of~$\mathcal{V}$ of non-zero dimension. Then~$\{n\in\Z_{\geq0}|Q_n\in \mathcal{C}\}$ is infinite.
Substituting~$s_n$ with the corresponding infinite subsequence, we may assume~$\mathcal{V}=\mathcal{C}$.

\subsection{Some finite sets}
For an abelian variety~$A'$, let
\[
\Theta_{A'}:=\{\phi(P)|\phi\in \Hom(A_P,A')\}
\]
and
\[
\Sigma_{A'}:=
\Theta_{A'}+A'_{tors}.
\]
and, for each~$d\in\Z_{\geq1}$,
\[
\Theta_{A',d}:=
\{Q'\in A'|d\cdot Q'\in \Theta_{A'}\}
\]
and
\[
\Xi_{A'}:=\{Q'\in A'|\exists n\in\Z_{\geq1}, n\cdot Q'\in \Theta_{A'}\}=\bigcup_{d\in\Z_{\geq1}}\Theta_{A',d}.
\] 
To each~$Q\in\Xi_{A'}$ we associate
\[
d_Q:=\min\{d\in\Z_{\geq1}|d\cdot Q\in \Sigma_{A'}\},\text{ and }
d'_Q:=\min\{d\in\Z_{\geq1}|d\cdot Q\in \Theta_{A'}\}.
\]
By definition of~$A_P$ and~$\Theta_{A'}$, the map
\[
\varphi\mapsto \varphi(P):\Hom(A_P,A')\to \Theta_{A'}
\]
is a bijection. Therefore~$\Theta_{A'}$ is a free abelian group, and thus is torsion-free.
 Therefore~$\Theta_{A'}\cap A'_{tors}=\{0\}$. That is,~$\Theta_{A'}$ and~$A'_{tors}$
are in direct sum in~$\Sigma_{A'}$.

To~$Q$ we can associate the unique~$\phi_Q,\phi_Q'\in\Hom(A_P,A')$ and~$T_Q\in A_{tors}$ such that
\[
d_Q\cdot Q=\phi_Q(P)+T_Q\text{ and }d'_Q\cdot Q=\phi'_Q(P).
\]
Fix~$\kappa,e\in\Z_{\geq1}$. 
Let~$L_e:=\{\lambda^e|\lambda\in \widehat{\Z}\}\leq \GL(1,\wh{\Z})$ act on~$A'_{tors}$. Let
\begin{align}\label{eq:def E Eprime}
F_Q&:=\phi_Q(A_P[d_Q])\\
\label{def:E Q kappa}
E(Q;\kappa)&:=Q+\kappa\cdot F_Q,\\
\label{def:E Q e}
\text{ and }
E'(d_Q\cdot Q;e)&:=d_Q\cdot Q+L_e\cdot T_Q.
\end{align}

\subsection{Some field extensions} Let~$K/\Q$ be a finitely generated extension such that~$A$ and~$S$ and~$\mathcal{C}$ are definable over~$K$ and~$P\in A(K)$ and~$\End(A/K)\simeq \End(A/\ol{K})$. Let~$n\in\Z_{\geq0}$ be arbitrary and let~$A':=A_{s_n}$. Let~$\phi\in\Hom(A,A')$ be such that~$C:=A'/\phi(A)$ has CM. 

We choose a model of~$A$ over~$K_1:=K$ and consider the action~$\rho_A:\Gal(\ol{K}/K)\to \Aut(A(\ol{K})_{\tors})$. 
Let~$\ol{K}/K_2/K_2'/K_1$ be defined by~$\Gal(\ol{K}/K_2)=\ker(\rho_A)$
and~$\rho(Gal(\ol{K}/K_2'))=\rho(Gal(\ol{K}/K))\cap GL(1,\wh{\Z})$ where~$GL(1,\wh{\Z})$ acts on~$A(\ol{K})_{\tors}$ by homotheties. 

As~$\ker(\phi)$ is stable under the action of~$GL(1,\wh{\Z})$ the quotient~$A\to A/\ker(\phi)$ is defined over~$K_2'$.

We consider the reflex morphism~$r:\Gal(E_*^{ab}/E_*)\to I_E:=\frac{(E\tens \A_f)^\times}{E^\times}$ and the subgroup~$I_\Q:=\frac{\A_f^\times}{\Q^\times}\leq I_E$. We define~$E_*^{ab}/E'_*/E_*$ by~$\Gal(E^{ab}/E'_*)=r^{-1}(I_\Q)$. We note that~$C$ is defined over~$E'$.


Let~$K_3:=E_*^{ab}\cdot K_2$ and~$K_3':=E'_*\cdot K_2'$.

There exists an isogeny
\[
\psi:(A/\ker(\phi))\times C\to A'.
\]
Note that~$(A/\ker(\phi))\times C$ is defined over~$K_3'$ and~$\ker(\psi)$ is stable under~$\Gal(\ol{K}/K_3')$. 
The quotient~$A'':=((A/\ker(\phi))\times C)/\ker(\psi)$ is thus a~$K'_3$-algebraic variety and the quotient map~$(A/\ker(\phi))\times C\to A''$ is defined over~$K'_3$.

We deduce that the image~$[A']=[A'']$ of~$s_n$ by~$S\to \mathbb{A}_g$ belongs to~$\mathbb{A}_g(K_3')$.
Let~$d_\Gamma$ be the degree of the map~$\mathbb{A}_\Gamma\to\mathbb{A}_g$. Then
\[
[K_3(s_n):K_3]\leq [K_3'(s_n):K_3']\leq 
[K(s_n):K([A'])]\leq d_\Gamma<+\infty.
\]

We claim that there exists~$c(\dim(A'))$ such that the isomorphism
\begin{equation}\label{eq:ol psi}
\ol{\psi}:A''\to A'
\end{equation}
induced by~$\psi$ is defined over an extension~$K'_4/K_3'(s_n)$ such that~$[K_4:K_3'(s_n)]\leq c(\dim(A'))$. Let~$K_4=K'_4\cdot K_3$. We have~$[K_4:K_3]\leq [K'_4:K_3']\leq d_\Gamma\cdot c(\dim(A'))$.
\begin{proof}[Proof of the claim]
 Both~$A''$ and~$A'$ are defined over~$L=K_3'(s_n)$. We remark that~$\End(A/K)\simeq \End(A/\ol{K})$ implies~$\End(A''/L)\simeq \End(A''/\ol{L})$, and implies that~$Gal(\ol{L}/L)$ acts trivially on~$Aut(A'')$. Therefore~$H^1(L;Aut(A''))\simeq Hom(Gal(\ol{L}/L);Aut(A''))$. Consider~$Isom(A'',A')$ as a~$Aut(A'')$-torsor. Its cohomology class is a morphism~$\phi:Gal(\ol{L}/L)\to Aut(A'')$. But there is a faithfull representation~$Aut(A'')\hookrightarrow GL(H^1(A(\C);\Z))\simeq GL(2g,\Z)$, and the image of~$\phi$ is finite. By Jordan's theorem,~$[Gal(\ol{L}/L):\ker(\phi)]\leq c(2g)$ for some~$c(2g)\in\Z_{\geq1}$. Therefore~$Isom(A'',A')$ has a~$L'$-rational point~$\psi'$ for some~$L'/L$ with~$[L':L]\leq c(2g)$. Therefore~$\ol{\psi}\in\psi'\cdot Aut(A'')(\ol{L'})=\psi'\cdot Aut(A'')(L)$ is defined over~$L'$.
\end{proof}

\subsubsection{Kummer-Ribet theory}
Let us prove the following.
\begin{theorem}\label{thm:Gal En} Let~$d_\Gamma\in\Z_{\geq1}$ be the degree of the map~$S\to \mathbb{A}_g$ and let~$\kappa'=k(A,K,P\cdot \Z;g)$ be as in Theorem~\ref{thm:uniform Kummer1}.
Let~$A'=A_{s_n}$ and let~$Q'\in\Xi_{A'}$. Let~$\Gal(\ol{K}/K)$ act on~$\mathcal{A}(\ol{K})$.
Then
\[
E(Q',\kappa)\subseteq \Gal(\ol{K}/K) \cdot Q'\cap A'(\ol{K})
\]
with~$\kappa'=(d_\Gamma\cdot c(\dim(A')))!\cdot \kappa$, and~$E(Q';\kappa)$ as in~\eqref{def:E Q kappa}, \eqref{eq:ol psi}.
\end{theorem}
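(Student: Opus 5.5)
We want to show that for $Q'\in\Xi_{A'}$ and every $t\in F_{Q'}=\phi_{Q'}(A_P[d_{Q'}])$, the point $Q'+\kappa\cdot t$ is a Galois conjugate of $Q'$ over $K$. (The statement's two constants $\kappa$ and $\kappa'$ have evidently been swapped; we read the conclusion as $E(Q',\kappa)\subseteq \Gal(\ol{K}/K)\cdot Q'$ with $\kappa=(d_\Gamma\cdot c(\dim(A')))!\cdot\kappa'$ and $\kappa'=k(A,K,P\cdot\Z;g)$.) The conjugates produced will lie in $A'$ automatically, since we use elements fixing $s_n$.

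\textbf{Step 1: transport to a split model.} Work over the field $K_4\supseteq K_3\supseteq K_2$ constructed above. There the isomorphism $\ol{\psi}:A''\to A'$ is defined, and $A''$ is a quotient of $(A/\ker\phi)\times C$ with $C$ CM. The degree bound is
\[[K_4:K_3]\le d_\Gamma\cdot c(\dim(A')).\]
Since $\ol\psi$ is defined over $K_4$, it suffices to prove the analogous inclusion in $A''$ for Galois elements fixing $K_4$. Through $\Hom(A_P,A')$ and the bijection $\varphi\mapsto\varphi(P)$, the point $d'_{Q'}Q'$ is $\phi'_{Q'}(P)$. So $Q'$ is a division point of an element of the group generated by $P$ under homomorphisms, that is, a point of $\Gamma_0$ for $\Gamma=\End\cdot P$ transported into $A'$.

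\textbf{Step 2: Kummer theory.} Apply Theorem~\ref{thm:uniform Kummer1}, or rather Theorem~\ref{thm:uniform Kummer2} with $E$ the compositum of the reflex field with $K$. This gives that the image under the Kummer cocycle
\[m:\Gal(\ol K/K(A_{tors},C_{tors}))\to\Hom(\Gamma,\wh T(A\times C))\]
contains $\kappa'\cdot\wh T(A_P)$, uniformly in $C$ of dimension at most $g$. Elements $\sigma$ in this kernel fix all torsion points. Their action on a division point $R$ with $NR=\varphi(P)$ is $\sigma(R)=R+\varphi(m(\sigma))\bmod N$. Choosing $\sigma$ with $m(\sigma)$ an arbitrary element of $\kappa'\wh T(A_P)$ therefore moves $Q'$ by any element of $\kappa'\cdot\phi_{Q'}(A_P[d_{Q'}])$. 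Here we use $d_{Q'}Q'=\phi_{Q'}(P)+T_{Q'}$: the torsion part $T_{Q'}$ is fixed, and this is exactly where $F_{Q'}$ comes from.

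\textbf{Step 3: passing to $K_4$.} The Kummer group is given over $K_3$-type fields, but we need $\sigma$ to also fix $K_4$, a degree at most $D:=d_\Gamma c(\dim A')$ extension. Since the image of $m$ restricted to an index-$\le D$ subgroup contains $D!$ times the original image (raise $\sigma$ to the power $D!$, which on the kernel of $\rho$ multiplies $m$ by $D!$), we obtain the displacement set $D!\,\kappa' F_{Q'}=\kappa F_{Q'}$. Transporting back via $\ol\psi$, which commutes with these $\sigma$, then gives the claim.

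The main obstacle is Step 2: matching the abstract cocycle description with the concrete point $Q'\in A'$. This requires checking that $\phi_{Q'}$ factors compatibly through $(A/\ker\phi)\times C\to A''$, and that the uniformity of Theorem~\ref{thm:uniform Kummer1} in $C$ covers the field $K_3$ including the reflex-field abelian extension.
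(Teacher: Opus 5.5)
Your proposal is correct and follows the paper's proof. Both arguments run through the same steps:
\begin{itemize}
\item Theorem~\ref{thm:uniform Kummer1} over $K_3$ puts $\kappa'\wh{T}(A_P)$ in the image of the Kummer map.
\item The result is transported along the homomorphism $A_P\to A''$, with the torsion part fixed.
\item The $D!$ trick descends from $K_3$ to $K_4$.
\item The result is transported through $\ol{\psi}$ to $A'$.
\end{itemize}
Your reading $\kappa=(d_\Gamma c(\dim A'))!\,\kappa'$ is what the paper's proof actually establishes. The point you leave open is that the homomorphism $A_P\to A''$ is defined over $K_3'\subseteq K_4$; the paper uses this implicitly here and justifies it only in the proof of Theorem~\ref{thm:Gal E'n}, via graphs of homomorphisms being abelian subvarieties of $A_P\oplus A''$.
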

\begin{proof}
By~Theorem~\ref{thm:uniform Kummer1},~$W:=m_P(\Gal(\ol{K}/K_3))\leq \wh{T}_{A_P}$ satisfies
\[
\kappa'\cdot\wh{T}_{A_P}\leq  W.
\]
Therefore, for every~$Q\in A(\ol{K})$ and~$p,q\in\Z_{\geq1}$ such that~$q\cdot Q=p\cdot P$ and~$\gcd(p,q)=1$, we have
\[
Q+\kappa'\cdot A_P[q]=Q+A_P[q/\gcd(q,\kappa')]\subseteq \Gal(\ol{K}/K_3)\cdot Q.
\]
Consider~$Q''\in A''(\ol{K})$,~$T\in A''(\ol{K})_{tors}$ and~$\phi'\in \Hom(A_P,A'')$ such that
\[
Q''=\phi'(Q)+T
\]
and~$\phi'$ is "primitive" in~$\Hom(A,A'')$, i.e.:
$
\forall m\in\Z_{\geq2}, \phi'\not\in m\cdot \Hom(A,A'').
$
Then
\[
\phi'(Q)+\kappa'\cdot \phi'(A_P[q])\subseteq \Gal(\ol{K}/K_3)\cdot \phi'(Q).
\]
As~$T\in A''(K_3)$ is fixed by~$\Gal(\ol{K}/K_3)$ we have also
\[
Q''+\kappa'\cdot \phi'(A_P[q])\subseteq \Gal(\ol{K}/K_3)\cdot Q''.
\]
Note that the action of~$\Gal(\ol{K}/K_3)$ is induced by a morphism~$\mu:\Gal(\ol{K}/K_3)\to \wh{T}_{A''}$
such that~$\kappa'\cdot \phi'(\wh{T}(A_P))$ is contained in the image of~$\mu$. Since~$[K_4:K_3]\leq d_\Gamma\cdot c(\dim(A'))$,
we deduce
\[
(d_\Gamma\cdot c(\dim(A')))!\cdot \kappa'\cdot \phi'(\wh{T}(A_P))\subseteq \mu(\Gal(\ol{K}/K_4)).
\]
This implies
\[
Q'+\kappa\cdot \phi'(A_P[q])\subseteq \Gal(\ol{K}/K_4)\cdot Q'.
\]
We recall that~$\ol{\psi}:A''\simeq A'$ is defined over~$K_4$. Therefore, for the action of~$\Gal(\ol{K}/K_4)$ on~$Q':=\ol{\psi}(Q'')\in A'(\ol{K})$ is induced by the action of~$\Gal(\ol{K}/K)$ on~$\mathcal{A}(\ol{K})$, we have
\[
Q'+\kappa\cdot \phi'(A_P[q])\subseteq \Gal(\ol{K}/K_4)\cdot 
Q'.\qedhere
\]
\end{proof}

\subsubsection{Lang-Serre theory} Let us prove the following.

\begin{theorem}\label{thm:Gal E'n} Let~$d_\Gamma\in\Z_{\geq1}$ be the degree of the map~$S\to \mathbb{A}_g$ and let~$e':=e(A,K,g)$ be as in Theorem~\ref{thm:Serre hybrid}.
Let~$A'=A_{s_n}$ and let~$Q'\in\Sigma_{A'}$. Let~$\Gal(\ol{K}/K)$ act on~$\mathcal{A}(\ol{K})$.
Then
\[
E'(Q';e)\subseteq \Gal(\ol{K}/K) \cdot Q'\cap A(\ol{K})
\]
with~$e=(d_\Gamma\cdot  c(\dim(A'')))!\cdot e'$, and~$E'(Q',e)$ as in~\eqref{def:E Q e}.
\end{theorem}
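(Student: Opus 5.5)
The plan mirrors the proof of Theorem~\ref{thm:Gal En}, with Theorem~\ref{thm:Serre hybrid} taking the place of Theorem~\ref{thm:uniform Kummer1}. Write $Q'=\phi_{Q'}(P)+T_{Q'}$ with $\phi_{Q'}\in\Hom(A_P,A')$ and $T_{Q'}\in A'_{tors}$; this decomposition is unique because $\Theta_{A'}\cap A'_{tors}=\{0\}$. We must show that for every $\lambda\in\wh{\Z}^\times$ the point $\phi_{Q'}(P)+\lambda^e\cdot T_{Q'}$ is Galois conjugate to $Q'$ over $K$, by an element preserving the fibre $A'$.

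\textbf{Step 1: transport to $A''$.} As in the previous subsection, use the isogeny $\psi:(A/\ker(\phi))\times C\to A''$ and the isomorphism $\ol{\psi}:A''\to A'$ of~\eqref{eq:ol psi}. Here $A''$ is defined over $K_3'$, and $\ol{\psi}$ is defined over $K_4'$ with $[K_4':K_3']\leq d_\Gamma\cdot c(\dim(A''))$. Pull $Q'$ back to $Q'':=\ol{\psi}^{-1}(Q')=\phi''(P)+T''$ in $A''$. Because $P\in A(K)$ and $\phi''$ factors through maps defined over $K_3'$, the point $\phi''(P)$ is fixed by $\Gal(\ol{K}/K_3')$ (after possibly enlarging $K$ so that $\Hom(A_P,\cdot)$ is rational). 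So the whole question reduces to how $\Gal(\ol{K}/K_4')$ acts on $T''\in A''_{tors}$.

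\textbf{Step 2: homotheties in the Galois image.} Apply Theorem~\ref{thm:Serre hybrid} to $A\times C$ with $F=\ker$ of the composite $A\times C\to A''$. This gives $e'=e(A,K,g)$ such that for every $\lambda$, some $\sigma\in\Gal(\ol{K}/K\cdot E^\ast(C))$ acts on $A''_{tors}$ by $\lambda^{e'}$. Since $E^\ast(C)$ is contained in $K_3'$ up to the identifications made earlier (the definition of $E'_*$ via $r^{-1}(I_\Q)$ is designed so that homotheties lie over $K_3'$), we may take $\sigma$ over $K_3'$. Then $\sigma^{m}$ with $m=(d_\Gamma\cdot c(\dim(A'')))!$ lies in $\Gal(\ol{K}/K_4')$, because the index of that subgroup is at most $d_\Gamma c$; replacing $\lambda^{e'}$ by $\lambda^{e'm}=\lambda^e$ is harmless. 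The element $\tau:=\sigma^m$ acts on $A''_{tors}$ by $\lambda^e$, fixes $\phi''(P)$, and commutes with $\ol{\psi}$.

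\textbf{Step 3: conclude.} We get
\[
\tau(Q')=\ol{\psi}(\tau Q'')=\ol{\psi}(\phi''(P)+\lambda^eT'')=\phi_{Q'}(P)+\lambda^e T_{Q'}.
\]
Since $\tau$ fixes $K_4'\ni s_n$, the point $\tau(Q')$ stays in the fibre $A'$. This gives the required inclusion of $E'(Q';e)$.

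\textbf{Main obstacle.} The delicate step is Step 2: checking that the homothety provided by Theorem~\ref{thm:Serre hybrid} can be realised by an element fixing $s_n$ and $\phi''(P)$ simultaneously. This requires the fields $K\cdot E^\ast(C)$ and $K_3'$ to match and the index bound for $K_4'$ to hold uniformly in $n$. I would first verify that $\Gal(\ol{K}/K_3')$ maps onto homotheties of $C_{tors}$, using $I_\Q\leq I_E$ and the reciprocity law. If it does not, I would pass to a bounded-index subgroup and absorb the index into $e$.
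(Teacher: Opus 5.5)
Your proposal follows the paper's proof essentially step for step: Theorem~\ref{thm:Serre hybrid} gives $L_{e'}\cdot T\subseteq \Gal(\ol{K}/K_3')\cdot T$ on $A''_{tors}$; the bound $[K_4':K_3']\leq d_\Gamma\cdot c(\dim(A''))$ together with the factorial trick upgrades this to $L_e$ over $K_4'$; transport by $\ol{\psi}$ (defined over $K_4'$) moves it to $A'$; and the part $\phi(P)$ is fixed because $\phi$ is defined over $K_3'$ and $P\in A(K)$. One small correction: Step~2 needs $\Gal(\ol{K}/K\cdot E^\ast(C))\subseteq \Gal(\ol{K}/K_3')$, i.e.\ $K_3'\subseteq K\cdot E^\ast(C)$, not $E^\ast(C)\subseteq K_3'$ as you wrote --- this is the same identification the paper uses without further comment.
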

\begin{proof}Let~$A''$ and~$\ol{\psi}:A''\to A'$ be as in~\eqref{eq:ol psi}. Let~$H$ be the Hilbert class field of the reflex field of~$C$, and let~$E(C)/H$ be a minimal extension over which~$C$ is defined. 
By Theorem~\ref{thm:Serre hybrid}, we have
\[
\forall T\in A''_{tors},
L_{e'}\cdot T\subseteq Gal(\ol{K}/K'_3)\cdot T.
\]
We note that~$[K'_4\cdot E(C):K'_3]\leq d_\Gamma \cdot c(\dim(A''))$ and deduce
\[
\forall T\in A''_{tors},
L_{e}\cdot T\subseteq Gal(\ol{K}/K'_4)\cdot T.
\]
Note that~$\ol{\psi}:A''\to A'$ is defined over~$K'_4$. We deduce
\[
\forall T\in A'_{tors},
L_{e}\cdot T\subseteq Gal(\ol{K}/K'_4)\cdot T.
\]
Let us write~$Q'=R+T$ with~$R\in \Theta_{A'}$ and~$T\in A'_{tors}$. 
We have~$R=\phi(P)$ for some~$\phi\in \Hom(A_P,A')$. We claim that~$\phi$
is defined over~$K'_4$.
\begin{proof}Since~$\ol{\psi}$ is defined over~$K'_4$, it is enough to prove that every~$\phi\in \Hom(A_P,A'')$ is defined over~$K'_4$. From~$\End(A/K)\simeq \End(A/\ol{K})$ we deduce that every abelian subvariety of~$A$ is defined over~$K$. Therefore every abelian subvariety of~$A_P$, of~$A''$ and of~$A_P\oplus A''$ is defined over~$K'_3$. This holds in particular for the graph of~$\phi$. Therefore~$\phi$ is defined over~$K_3'\subseteq K'_4$.
\end{proof}
Therefore
\begin{multline}
Gal(\ol{K}/K'_4\cdot E(C))\cdot (R+T)\\
=R+Gal(\ol{K}/K_4'\cdot E(C))\cdot T\\
\supseteq
R+L_e\cdot T=E'(Q';e).\qedhere
\end{multline}
\end{proof}

%

\subsection{Using equidistribution}
We use the notations from~\S\ref{sec:51}
We consider sequence $(E_n)$ defined as follows. Let $\kappa$ be as in Theorem \ref{thm:Gal En} and 
$$
E_n = E(Q_n, \kappa).
$$
Recall that $\mathcal{C}$ is defined over $K$ and 
$$
Q_n \in \mathcal{C}(\overline{K}).
$$
Therefore 
$$
Gal(\overline{K}/K) \cdot Q_n \subset \mathcal{C}.
$$
By Theorem~\ref{thm:Gal En},
$$
Q_n \in E_n \subset Gal(\overline{K}/K) \cdot Q_n \subset \mathcal{C}.
$$
By taking the Zariski closure in $\mathcal{A}$, we see that
$$
\mathcal{C}=(\bigcup Q_n)^{Zar} \subset (\bigcup E_n)^{Zar} \subset \mathcal{C}.
$$ 

By Theorem \ref{thm:weak}, there exists an abelian subscheme $\mathcal{B}$ of $\mathcal{A}$
such that
$$
\mathcal{C} = \mathcal{C} + \mathcal{B}
$$
and there exists a $d$ such that for every $n$, the image of $E_n$ by the quotient map
$A_n \to A_n/B_{s_n}$ is of cardinality at most $d$.

Let $B'$ be an abelian subvariety of the generic fibre $A_{\eta}$ which is a supplement of $B_{\eta}$ that is,
$A_{\eta}= B' + B_{\eta}$ and there exists $n_2$ such that $B' \cap B_{\eta} \subset A[n_2]$.

Let $\pi \in End(A_{\eta})$ be such that $B'=Im(\pi)$.
Recall that we assumed that $End(A_{\eta})=End(\mathcal{A})$, we can consider the abelian subscheme 
$\mathcal{B'}=Im(\pi)$. For every $s \in S$, $B'_s + B_s = A_s$ and $B'_s \cap B_s \subset A_s[n_2]$.

 Let $F_n = F_Q:=\phi_Q(A_P[d_Q])$ as in \eqref{eq:def E Eprime} with $Q=Q_n$. Let 
$\pi : A_n \to A_n/B_{s_n}$ be the quotient map.
Recall that $E_n = F_n + Q_n$. Therefore,
$$
\# \pi(F_n) = \# \pi(E_n)\leq d
$$   
and thus $\pi(F_n) \subset A_n/B_{s_n}[d!]$.

We now apply Theorem \ref{thm:E1} with $A_1=A$, $A'_1=A_P$, $A_2=A_n$, $A'_2=B_n$, $A''_2=A_n/B_n$,
$Q=Q_n$, $\phi=\phi_{Q_n}$ and $d=d_{Q_n}$ and $d' = gcd(d!,d_{Q_n})$.
The $n_2$ from that theorem is the same as above.

There exists $Q'_n = Q' \in Q_n + B_{s_n} \subseteq \mathcal{C}$ such that
$$
d' n_2 Q' = \phi'_n (P) + T_n
$$
with $\phi' \in Hom(A, A_n)$ and $T_n \in A_{n, tors}$.

Let $\mathcal{C}'$ be the closure of the set of $Q'_n$. We observe that 
$\mathcal{C}' \subset \mathcal{C} \subset \mathcal{C}' + \mathcal{B}$.

Therefore
\[\mathcal{C}' + \mathcal{B} = \mathcal{C} + \mathcal{B}= \mathcal{C}.\]

Note also that the closure of the set $\{d!\cdot n_2 Q'_n\}$ is $ \mathcal{C}'':=d!\cdot n_2 \mathcal{C}'$.

\subsubsection{}

Let $E_n'= E'(d!\cdot n_2 Q'_n, e)$ with $e$ as in Theorem \ref{thm:Gal E'n}. 
As before, we prove
\[
\mathcal{C}''=(\bigcup Q'_n)^{Zar} \subset (\bigcup E'_n)^{Zar} \subset \mathcal{C}''.
\]
By Theorem \ref{thm:weak}, there exists an abelian subscheme $\mathcal{B'}$ of $\mathcal{A}$
such that
$$
\mathcal{C}'' = \mathcal{C}'' + \mathcal{B}'
$$
and there exists a $d'$ such that for every $n$, the image of $E'_n$ by the quotient map
$\pi_n:A_n \to A_n/B'_{s_n}$ is of cardinality at most $d'$. Let us write~$d!\cdot n_2 Q'_n=\phi_n(P)+T_n$
with~$T_n\in {(A'_n)}_{tors}$ and~$\phi\in \Hom(A_P,A')$. Then the image of~$E'_n$
is
\[
\pi_n(E'_n)=\pi_n(\phi_n(P))+L_e\cdot \pi_n(T_n)
\]
and we have
\[
\#\pi_n(E'_n)=\omega_n^{1+o(1)}
\]
where~$\omega_n$ is the order of~$\pi_n(T_n)\in (A_n/B'_{s_n})_{tors}$. We deduce from~$\#\pi_n(E'_n)\leq d'$
that, for some~$d''\in\Z_{\geq1}$, we have~$\forall n\in\Z_{\geq1},\omega_n\leq d''$.

By Lemma~\ref{lem:C3} there exists~$T''_n\in {(A_n)}_{tors}$ of order~$\omega_n$ such that
$T''_n+B'_{s_n}= T_n+B'_{s_n}$. We have thus
\[
d''!\cdot \mathcal{C}''=\ol{\bigcup \phi_n(d''!\cdot P)+ d''!\cdot T_n+B'_{s_n}}^{Zar}=\ol{\bigcup \phi_n(d''!\cdot P)+ B'_{s_n}}^{Zar}
\]
and
\[
d''!\cdot \mathcal{C}''=d''!\cdot \mathcal{C}''+ \mathcal{B'}=
\ol{\bigcup \phi_n(P)}^{Zar}+ \mathcal{B}'.
\]
We deduce
\[
d''!n_2d'\cdot 
\mathcal{C}'=
\ol{\bigcup \phi_n(P)}^{Zar}+ \mathcal{B}'.
\]
and
\[
d''!\cdot n_2\cdot d!\cdot 
\mathcal{C}=
\ol{\bigcup \phi_n(P)}^{Zar}+ \mathcal{B}'+ \mathcal{B}.
\]

This proves Theorem~\ref{thm:Main}.
\appendix
\section{}
\begin{lemma}\label{lem:liftkernel}
Let~$\varphi:X\to Y$ be a homomorphism of abelian groups. Assume that~$\ker(\phi)$ is annihilated by~$n\in\Z_{\geq1}$. Then
\[
\varphi(x)\mapsto n\cdot x:\varphi(X)\to X 
\]
is a well defined homomorphism, and
\[
\forall x\in X, n\cdot x\neq 0\Rightarrow \phi(x)\neq 0.
\]
\end{lemma}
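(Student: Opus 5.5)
The plan is to define the candidate map directly and check well-definedness using the hypothesis on $\ker(\varphi)$. For $y\in\varphi(X)$, pick any $x\in X$ with $\varphi(x)=y$ and set $\psi(y):=n\cdot x$. The only point to verify is independence of the choice of $x$. If $\varphi(x)=\varphi(x')$, then $x-x'\in\ker(\varphi)$. Since $\ker(\varphi)$ is annihilated by $n$, we get $n\cdot(x-x')=0$, that is, $n\cdot x=n\cdot x'$. So $\psi:\varphi(X)\to X$ is well defined.

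The homomorphism property follows from linearity of the choices. If $y=\varphi(x)$ and $y'=\varphi(x')$, then $x+x'$ is an admissible preimage of $y+y'$, so
\[
\psi(y+y')=n\cdot(x+x')=\psi(y)+\psi(y').
\]
By construction, $\psi\circ\varphi$ is multiplication by $n$ on $X$.

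The second assertion is the contrapositive of a one-line computation. Suppose $\varphi(x)=0$. Then $x\in\ker(\varphi)$, so $n\cdot x=0$; equivalently, $n\cdot x=\psi(\varphi(x))=\psi(0)=0$. Hence $n\cdot x\neq0$ forces $\varphi(x)\neq0$.

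There is no real obstacle. The only care needed is the notational slip between $\varphi$ and $\phi$ in the statement, which I read as the same map. This lemma is used in Proposition~\ref{prop:412}\eqref{4123} with $X=H^1(G;B[N])$, where the kernel is $H^1(G';B[N])$, annihilated by $n$. It is also used in the proof of Theorem~\ref{thm:Serre unif principal}, with the kernel there being $Z(M^{der})(\wh{\Z})$, annihilated by $c_3(A)$.
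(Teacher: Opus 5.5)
Your proof is correct and is the same as the paper's. You get well-definedness from $n\cdot(x-x')=0$ when $\varphi(x)=\varphi(x')$, additivity by taking a sum of preimages as a preimage of the sum, and the second claim as the contrapositive of the hypothesis on $\ker(\varphi)$; reading $\phi$ and $\varphi$ as the same map is the right interpretation.
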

\begin{proof}[Proof of Lemma~\ref{lem:liftkernel}]If~$\varphi(x)=\varphi(x')$, then~$\varphi(x-x')=0$ and thus~$n(x-x')=nx-nx'=0$. For~$\phi(x),\phi(x')\in \varphi(X)$, we have~$\varphi(x)-\varphi(x')=\varphi(x-x')\mapsto n\cdot (x-x')=n\cdot x- n\cdot x'$. The claim follows immediately.
\end{proof}
%
%
%

\section{Some linear algebra and arithmetic stability}\label{App:B}
The aim of this appendix is to prove Theorem~\ref{thm:D1}. This result is used in~\S\ref{sec:proof 4.10} for the proof of Theorem~\ref{thm:uniform Kummer2}.

\subsection{Split case}
\begin{theorem}\label{thm:split linear}
Let~$L/\Q_\ell$ be a finite extension, let~$k,n_1,\ldots,n_k\in\Z_{\geq1}$ and let~$V_1,\ldots,V_k$ be finite dimensional $L$-vector spaces.
Let~$W\leq V= \bigoplus_{i=1}^k L^{n_i}\tens V_i$ be an~$O_L$-submodule.  We consider the action
\[
E:=\prod_{i=1}^k \End(O_L^{n_i})\tens Id_{V_i}\leq \prod_{i=1}^k\End(L^{n_i})\tens \End(V_i)\leq \End(V).
\]
Let~$\lambda\in L$ be such that
\[
\lambda \cdot E\cdot W\leq W.
\]
Then there are~$O_L$-modules~$\Lambda_i\leq V_i$ such that
\[
\lambda\cdot\bigoplus_{i=1}^k O_L^{n_i}\tens_{O_L} \Lambda_i\leq W
\text{ and }
\lambda\cdot W\leq \bigoplus_{i=1}^k O_L^{n_i}\tens_{O_L}\Lambda_i.
\]
\end{theorem}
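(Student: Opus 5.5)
The plan is to use the idempotents and matrix units of $E$ to cut $W$ into pieces, and then read off the $\Lambda_i$ from one coordinate of each piece.

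\textbf{Block decomposition.} For each $i$ and each $1\le a,b\le n_i$, let $e^{(i)}_{ab}\in \End(O_L^{n_i})$ be the elementary matrix. View $e^{(i)}_{ab}\tens Id_{V_i}$, extended by zero on the other summands, as an element of $E$. Set
\[
\Lambda_i:=\{v\in V_i \mid \exists w\in W,\ e^{(i)}_{11}w = \epsilon_1\tens v\},
\]
where $\epsilon_1$ is the first basis vector of $O_L^{n_i}$. Equivalently, $\Lambda_i$ is the image of $W$ under ``project to block $i$ and take the first row coordinate''. It is an $O_L$-submodule of $V_i$, because $W$ is an $O_L$-module and $e^{(i)}_{11}$ is $O_L$-linear.

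\textbf{First inclusion: $\lambda\cdot\bigoplus_i O_L^{n_i}\tens\Lambda_i\le W$.} The module on the left is generated by the elements $\epsilon_a\tens v$ with $v\in\Lambda_i$, each lying in block $i$. Given such $v$, pick $w\in W$ with $e^{(i)}_{11}w=\epsilon_1\tens v$. Then
\[
\lambda\,(\epsilon_a\tens v)=\lambda\, e^{(i)}_{a1}e^{(i)}_{11}w=\lambda\, e^{(i)}_{a1}w,
\]
and $e^{(i)}_{a1}w\in E\cdot W$. Hence $\lambda(\epsilon_a\tens v)\in\lambda E W\le W$. Summing gives the inclusion, since $W$ is an $O_L$-module.

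\textbf{Second inclusion: $\lambda W\le\bigoplus_i O_L^{n_i}\tens\Lambda_i$.} Write $w\in W$ as
\[
w=\sum_i\sum_a \epsilon_a\tens v^{(i)}_a,\qquad v^{(i)}_a\in V_i.
\]
It suffices to show $\lambda v^{(i)}_a\in\Lambda_i$ for all $i,a$. Consider $w':=\lambda\, e^{(i)}_{1a}w$, which lies in $\lambda E W\le W$. It satisfies
\[
e^{(i)}_{11}w'=\epsilon_1\tens \lambda v^{(i)}_a,
\]
so $\lambda v^{(i)}_a\in\Lambda_i$ directly from the definition. Therefore $\lambda w=\sum \epsilon_a\tens\lambda v^{(i)}_a$ lies in the target.

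\textbf{Main obstacle.} I expect the only delicate point to be bookkeeping: the matrix units must genuinely lie in $E$ with zero components on the other factors. This holds because $E$ is a product of the rings $\End(O_L^{n_i})$, so it contains $0$ in the other factors. One should also check that no finiteness hypothesis is needed, since $\Lambda_i$ is merely an $O_L$-submodule as required. If $\lambda=0$ the statement is trivial; in general nothing more than the two computations above is needed.
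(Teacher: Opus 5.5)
Your proof is correct and is essentially the paper's argument: both use the elementary matrices and coordinate projectors of each factor $\End(O_L^{n_i})$, viewed as elements of $E$, to move vectors between the coordinate slots of block $i$. The only difference is the choice of $\Lambda_i$. You take the projection of $W$ onto the first coordinate of block $i$, whereas the paper takes $\Lambda_i=\sum_{m}\Lambda_{i,m}$ with $\Lambda_{i,m}=\{v\in V_i\mid e_{i,m}\otimes v\in W\}$; this needs the extra step $\lambda\Lambda_{i,m}\le\Lambda_{i,m'}$, so your choice is slightly more economical.
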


\begin{proof}[Proof of theorem~\ref{thm:split linear}]
Let~$e_{i,1},\ldots,e_{i,n_i}$ be the standard basis of~${L}^{n_i}$, let
\[
\eps_{i,(m,m')}:= e_{i,m}\times {}^te_{i,m'}\in \End(L^{n_i}):(\lambda_n)_{1\leq n\leq n_i}\mapsto (\delta_{m,n} \lambda_{m'})_{1\leq n\leq n_i}
\]
be the standard elementary matrices and let~
\[
\pi_{i,m}=\eps_{i,(m,m)}\in \End(L^{n_i}):(\lambda_n)_{1\leq n\leq n_i}\mapsto (\delta_{m,n} \lambda_n)_{1\leq n\leq n_i}
\]
be the~$m$-th standard projector.

We can uniquely decompose any~$v\in V$ as
\[
v=\bigoplus_{i=1}^k \sum^{n_i}_{m=1} e_{i,m}\tens v_{i,m}.
\]
Let~$\Lambda_{i,m}:=\{v\in V_i|e_{i,m}\tens v\in W\}$. If~$v\in \Lambda_{i,m}$,
then
\[
e_{i,m'}\tens \lambda \cdot v = \lambda \eps_{i,(m',m)}(e_{i,m}\tens v)\in \lambda\cdot E(W)\leq W.
\]
Therefore~$\lambda\cdot \Lambda_{i,m}\leq \Lambda_{i,m'}$. We deduce
\[
\Lambda_i:=\sum_{i=1}^{m}\Lambda_{i,m'}\geq \bigcap_{i=1}^{n_i} \Lambda_{i,m}\geq \lambda\cdot \Lambda_i.
\]
By construction
\begin{equation}\label{eq:356}
\bigoplus_{i=1}^k O_L^{n_i}\tens \lambda\cdot \Lambda_i
\leq 
\bigoplus_{i=1}^k \sum^{n_i}_{m=1} e_{i,m}\tens \Lambda_{i,m}
\leq W.
\end{equation}
For~$w\in W$, we have
\(
e_i\tens \lambda w_{i,m}=\lambda \pi_{i,m}(w)\in W.
\)
Therefore~$\lambda\cdot w_{i,m}\in \Lambda_{i,m}$ and
\[
\lambda\cdot w=\sum_{1\leq i\leq k,1\leq m\leq n_i}e_{i,m}\tens \lambda\cdot w_{i,m}\in 
\bigoplus_{i=1}^k \sum^{n_i}_{m=1} e_{i,m}\tens \Lambda_{i,m}\leq \bigoplus_{i=1}^k O_L^{n_i}\tens \Lambda_i.
\]
As~$w\in W$ is arbitrary, we conclude
\begin{equation}\label{eq:123}
\lambda\cdot W\leq \bigoplus_{i=1}^k O_L^{n_i}\tens \Lambda_i.
\end{equation}
The Theorem follows from~\eqref{eq:356} and~\eqref{eq:123}.
\end{proof}
\begin{corollary}Let us choose a~$L$-homogeneous norm on each~$V_i$ and equip~$V$ with the corresponding norm. 
Let~$e_i\in (O^\times_L)^{n_i}$ for~$i=1,\ldots,k$. Then
\[
\abs{\lambda}^2\cdot \norm{W}\leq \max_{1\leq i\leq k}\norm{(e_i\tens V_i)\cap W}
\]
\end{corollary}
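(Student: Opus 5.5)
The corollary follows by feeding both inclusions of Theorem~\ref{thm:split linear} into the norm and comparing. I read $\norm{X}:=\sup_{x\in X}\norm{x}$ for a subset $X\leq V$. I take the norm on $V=\bigoplus_i L^{n_i}\tens V_i$ to be the natural ultrametric one: for $v=\bigoplus_i\sum_m e_{i,m}\tens v_{i,m}$ it is $\norm{v}=\max_{i,m}\norm{v_{i,m}}$. This is what ``the corresponding norm'' should mean, and it is $L$-homogeneous.

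First I record two elementary facts about this norm.
\begin{enumerate}
\item For $a\in O_L^{n_i}$ and $v\in V_i$ we have $\norm{a\tens v}=\max_m\abs{a_m}\cdot\norm{v}\leq\norm{v}$.
\item If $e_i\in(O_L^\times)^{n_i}$, all coordinates of $e_i$ are units, so $\norm{e_i\tens v}=\norm{v}$.
\end{enumerate}
Together with the ultrametric inequality, fact~1 gives $\norm{\bigoplus_i O_L^{n_i}\tens\Lambda_i}\leq\max_i\norm{\Lambda_i}$.

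Next I apply Theorem~\ref{thm:split linear}. I assume its hypothesis $\lambda\cdot E\cdot W\leq W$, which is implicit in the corollary. This yields $O_L$-modules $\Lambda_i\leq V_i$ with
\[
\lambda\cdot W\leq \bigoplus_i O_L^{n_i}\tens\Lambda_i
\quad\text{and}\quad
\lambda\cdot \bigoplus_i O_L^{n_i}\tens\Lambda_i\leq W.
\]
From the first inclusion and homogeneity,
\[
\abs{\lambda}\cdot\norm{W}=\norm{\lambda W}\leq\max_i\norm{\Lambda_i}.
\]
From the second inclusion, using $e_i\in O_L^{n_i}$, we get $\lambda\cdot(e_i\tens\Lambda_i)\leq (e_i\tens V_i)\cap W$. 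Fact~2 then gives
\[
\abs{\lambda}\cdot\norm{\Lambda_i}=\norm{e_i\tens\lambda\Lambda_i}\leq\norm{(e_i\tens V_i)\cap W}.
\]

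Multiplying the first estimate by $\abs{\lambda}$ and inserting the second gives
\[
\abs{\lambda}^2\norm{W}\leq\max_i\abs{\lambda}\norm{\Lambda_i}\leq\max_i\norm{(e_i\tens V_i)\cap W},
\]
which is the claim.

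I expect no real obstacle here. The only points needing care are fixing the meaning of $\norm{W}$ and of the norm on $V$ so that fact~2 holds, which is exactly where the hypothesis $e_i\in(O_L^\times)^{n_i}$ is used. One should also note that the argument is insensitive to the degenerate cases $\Lambda_i=0$ or $\norm{W}=\infty$: both sides are then compared in $[0,+\infty]$, and the chain of inequalities still holds.
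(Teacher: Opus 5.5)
Your proof is correct and is essentially the paper's own argument. Both proofs feed the inclusion $\lambda W\leq\bigoplus_i O_L^{n_i}\tens\Lambda_i$ from Theorem~\ref{thm:split linear} into the norm to get $\abs{\lambda}\norm{W}\leq\max_i\norm{\Lambda_i}$, and then use $\lambda(e_i\tens\Lambda_i)\leq(e_i\tens V_i)\cap W$ together with $\norm{e_i\tens v}=\norm{v}$ for unit vectors $e_i$ to bound $\abs{\lambda}\norm{\Lambda_i}$ by $\norm{(e_i\tens V_i)\cap W}$. Your write-up is, if anything, slightly more careful than the paper's: you fix the meaning of $\norm{W}$ and of the norm on $V$, and you allow a separate $e_i$ for each $i$.
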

\begin{proof}For~$e\in O_L^{n_i}$, we have~$\lambda\cdot (e\tens \Lambda_i)\leq (e\tens V_i)\cap W$. Therefore
\[
\abs{\lambda}\cdot \norm{e}\cdot \norm{O_L^{n_i}\tens\Lambda_i}=
\abs{\lambda}\cdot \norm{e\tens \Lambda_i}\leq \norm{(e\tens V_i)\cap W}.
\]
For~$e\in (O_L^\times)^{n_i}$ we have~$\norm{e}=1$.
We deduce
\[
\abs{\lambda}^2 \cdot \norm*{W}\leq \abs{\lambda}\cdot \norm*{\bigoplus_{i=1}^k O_L^{n_i}\tens \Lambda_i}=
\abs{\lambda}\cdot \max_{1\leq i \leq k}\norm*{O_L^{n_i}\tens \Lambda_i}\leq 
\max_{1\leq i \leq k}\norm*{(e\tens V_i)\cap W}.
\]
\end{proof}
\subsubsection{Applications}
Let us mention the case~$\lambda=1$. 
\begin{corollary}
A~$O_L$-submodule~$W\leq \bigoplus_{i=1}^k L^{n_i}\tens V_i$ is of the form~$W=\bigoplus_{i=1}^k O_L^{n_i}\tens_{O_L} \Lambda_i$ if and only if it is~$\prod_{i=1}^kGL(n_i,O_{L})$-invariant.
\end{corollary}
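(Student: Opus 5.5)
The statement to prove is the last corollary: an $O_L$-submodule $W\leq \bigoplus_{i=1}^k L^{n_i}\tens V_i$ has the form $\bigoplus_{i=1}^k O_L^{n_i}\tens_{O_L}\Lambda_i$ if and only if it is invariant under $\prod_{i=1}^k GL(n_i,O_L)$. I would reduce this to Theorem~\ref{thm:split linear} with $\lambda=1$. The only extra input needed is that $GL(n_i,O_L)$-invariance upgrades to invariance under the whole ring $E=\prod_i \End(O_L^{n_i})\tens Id_{V_i}$.

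For the easy direction, assume $W=\bigoplus_i O_L^{n_i}\tens\Lambda_i$. An element $(g_i)_i\in\prod_i GL(n_i,O_L)$ acts as $g_i\tens Id_{V_i}$ on the $i$-th summand. Since $g_i(O_L^{n_i})= O_L^{n_i}$, each summand $O_L^{n_i}\tens\Lambda_i$ is preserved, so $W$ is invariant.

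For the converse, assume $W$ is $\prod_i GL(n_i,O_L)$-invariant. The key step is to show $E\cdot W\leq W$.
\begin{itemize}
\item Since $W$ is an $O_L$-module, it is stable under the $O_L$-span of $\prod_i GL(n_i,O_L)$ inside $\End(V)$.
\item It therefore suffices to show that this span is all of $\prod_i \End(O_L^{n_i})$.
\item The idempotent projections onto the factors can be written as differences of invertible elements: $\mathrm{diag}(1,\ldots,1,-1,1,\ldots)$ acts as $-1$ on one factor and as the identity elsewhere, so $(Id-\mathrm{diag})/2$ isolates a factor. This needs $2\in O_L^\times$.
\item To avoid dividing by $2$, use instead $(g_i,1,\ldots)-(1,1,\ldots)$, which equals $(g_i-1,0,\ldots,0)$. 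This reduces the problem to a single factor: it is enough to show that $\End(O_L^{n})$ is the $O_L$-span of $GL(n,O_L)$.
\item For the single factor, each elementary matrix $\eps_{(m,m')}$ with $m\neq m'$ equals $(1+\eps_{(m,m')})-1$, a difference of two elements of $GL(n,O_L)$.
\item Each diagonal idempotent $\pi_m$ can be written as $\pi_m=\eps_{(m,m')}\eps_{(m',m)}$, a product of spanned elements; the span is closed under products because $GL(n,O_L)$ is a group. When $n=1$, $\End(O_L)=O_L$ is spanned by $1$ directly.
\end{itemize}
Hence $E\cdot W\leq W$.

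Theorem~\ref{thm:split linear} with $\lambda=1$ then produces $O_L$-modules $\Lambda_i\leq V_i$ with
\[
\bigoplus_{i=1}^k O_L^{n_i}\tens\Lambda_i\leq W\leq \bigoplus_{i=1}^k O_L^{n_i}\tens\Lambda_i,
\]
so equality holds.

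The main (mild) obstacle I expect is the spanning step, in particular not inadvertently dividing by $2$ or by other non-units of $O_L$; the elementary-matrix argument above handles it uniformly.
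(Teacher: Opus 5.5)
Your route matches the paper's. The paper obtains this corollary simply as ``the case $\lambda=1$'' of Theorem~\ref{thm:split linear}. In doing so it silently identifies $\prod_i GL(n_i,O_L)$-invariance with invariance under $E=\prod_i\End(O_L^{n_i})\tens Id_{V_i}$, and you rightly isolate that identification as the only real content. The easy direction and the final appeal to Theorem~\ref{thm:split linear} are fine, but the spanning step has a genuine gap.

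Let $\mathcal{S}$ be the $O_L$-span of the group inside $\prod_i\End(O_L^{n_i})$. Subtracting the identity gives you $(g_i-1,0,\ldots,0)$, not $(g_i,0,\ldots,0)$, so the problem does not reduce to ``$\End(O_L^{n})$ is spanned by $GL(n,O_L)$''. For $n_i\geq 2$ this is harmless: $\eps_{(m,m')}=(1+\eps_{(m,m')})-1$ and $\mathcal{S}$ is a ring. So your elementary-matrix computation does place all of $\End(O_L^{n_i})$, in the $i$-th slot with zeros elsewhere, inside $\mathcal{S}$. For a factor with $n_i=1$, however, ``$\End(O_L)$ is spanned by $1$'' only yields $(1,\ldots,1)\in\mathcal{S}$. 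To isolate $(0,\ldots,0,1,0,\ldots,0)$ you need one of two things:
\begin{itemize}
\item a unit $u$ with $u-1\in O_L^\times$, which exists if and only if the residue field of $O_L$ has more than two elements; or
\item $i$ being the only index with $n_i=1$, in which case you subtract from $\mathrm{Id}$ the identities of the other factors, which lie in $\mathcal{S}$ by the $n_j\geq 2$ case.
\end{itemize}

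This cannot be repaired in general, because the statement itself fails in the remaining case. Take $L=\Q_2$, $k=2$, $n_1=n_2=1$, $V_1=V_2=L$, and $W=\{(x,y)\in\Z_2^2 : x\equiv y \pmod 2\}$. Since $\Z_2^\times=1+2\Z_2$, for $u,v\in\Z_2^\times$ we get $ux-vy=(u-1)x-(v-1)y+(x-y)\in 2\Z_2$. Hence $W$ is $GL(1,\Z_2)\times GL(1,\Z_2)$-invariant. Yet $W\cap(L\oplus 0)=2\Z_2\oplus 0$, while the projection of $W$ to the first factor is $\Z_2$, so $W$ is not of the form $\Lambda_1\oplus\Lambda_2$.

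Thus your argument, like the paper's one-line derivation, establishes the corollary only when the residue field of $L$ is not $\F_2$ or at most one $n_i$ equals $1$. What Theorem~\ref{thm:split linear} with $\lambda=1$ gives unconditionally is the equivalence with $\prod_i\End(O_L^{n_i})$-invariance in place of $\prod_i GL(n_i,O_L)$-invariance.
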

This contains criterions for a lattice to be a direct sum, or a tensor product. For instance the following.
\begin{corollary}Let~$W\leq V:= V_1\tens V_2$ be a lattice. Then~$W$ is of the form~$W_1\tens W_2$ if and only if the maximal compact subgroup~$GL(W)\leq GL(V)$ contains a maximal subgroup of~$GL(V_1)\tens Id_{V_2}$. The latter is necessarily~$GL(W)\cap GL(V_1)\tens Id_{V_2}= GL(W_1)\tens Id_{V_2}$.

The maximal compact subgroups~$GL(W)\leq GL(V)$ containing~$GL(W_1)\tens Id_{V_2}$ are the~$GL(W_1\tens W_2)$.
They form an orbit under~$Id_{V_1}\tens GL(V_2)$.
\end{corollary}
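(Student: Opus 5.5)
The plan is to reduce the statement to Theorem~\ref{thm:split linear} with $k=1$ and $\lambda=1$, and then to handle the homothety ambiguity separately.

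\emph{Only if.} Suppose $W=W_1\tens W_2$ with $W_i\leq V_i$ lattices. Then $GL(W_1)\tens Id_{V_2}$ preserves $W$, so it lies in $GL(W)$. It is a maximal compact subgroup of $GL(V_1)\tens Id_{V_2}\simeq GL(V_1)$, since lattice stabilisers are maximal compact.

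\emph{If.} Suppose $GL(W)$ contains a maximal compact subgroup $U$ of $GL(V_1)\tens Id_{V_2}$. Over the local field $L$, every maximal compact subgroup of $GL(V_1)$ is the stabiliser of some lattice, so $U=GL(W_1)\tens Id_{V_2}$ for a lattice $W_1\leq V_1$. We choose an $O_L$-basis of $W_1$ and identify $V_1=L^{n}$ and $W_1=O_L^n$.

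The key step is to upgrade invariance under $GL(n,O_L)$ to invariance under the full ring $E=\End(O_L^n)\tens Id_{V_2}$, as Theorem~\ref{thm:split linear} requires. Since $W$ is an $O_L$-module stable under the group, it is stable under the $O_L$-algebra generated by $GL(n,O_L)$. We show this algebra is $M_n(O_L)$.
\begin{itemize}
\item If $n=1$, then $E=O_L$ and there is nothing to prove.
\item If $n\geq2$, then for $m\neq m'$ we have $\eps_{(m,m')}=(I+\eps_{(m,m')})-I$. The projectors are products $\pi_m=\eps_{(m,m')}\eps_{(m',m)}$.
\end{itemize}
Using products of elementary matrices for the projectors, rather than formulas such as $(\mathrm{diag}(-1,1,\ldots)-I)/(-2)$, avoids dividing by $2$. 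This is the one point where I expect care to be needed, namely when $\ell=2$.

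With $E$-invariance in hand, Theorem~\ref{thm:split linear} (with $\lambda=1$) gives an $O_L$-module $\Lambda\leq V_2$ with $W=O_L^n\tens\Lambda=W_1\tens\Lambda$. Because $W$ is a lattice, $\Lambda$ is a lattice, and we set $W_2:=\Lambda$.

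\emph{The intersection.} The group $GL(W)\cap\left(GL(V_1)\tens Id_{V_2}\right)$ is compact and contains the maximal compact subgroup $GL(W_1)\tens Id_{V_2}$. By maximality the two are equal.

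\emph{The orbit statement.} If $GL(W)$ contains $GL(W_1)\tens Id_{V_2}$, the argument above gives $W=W_1\tens W_2$ for some lattice $W_2$, so $GL(W)=GL(W_1\tens W_2)$. Two lattices $W_2,W_2'\leq V_2$ differ by some $g\in GL(V_2)$, and then
\[
(Id_{V_1}\tens g)(W_1\tens W_2)=W_1\tens W_2'.
\]
Conversely, $Id_{V_1}\tens GL(V_2)$ commutes with $GL(W_1)\tens Id_{V_2}$, so it preserves this family, which is therefore exactly one orbit.

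\emph{The remaining subtlety.} The subgroup $GL(W_1)$ determines $W_1$ only up to homothety, since a lattice stabiliser is a vertex of the building, that is, a homothety class of lattices. A scalar $c\in L^\times$ satisfies $cW_1\tens W_2=W_1\tens cW_2$, so the ambiguity is absorbed into $W_2$ and the conclusions are unaffected.
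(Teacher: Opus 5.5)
Your proposal is correct and follows the route the paper intends. The paper gives no separate proof: it presents this statement as the $k=1$, $\lambda=1$ case of Theorem~\ref{thm:split linear}, via the preceding corollary, and your argument combines exactly that with the standard fact that maximal compact subgroups of $GL(V_1)$ are lattice stabilisers, determined up to homothety. Your transvection argument, writing $\eps_{(m,m')}=(I+\eps_{(m,m')})-I$ and $\pi_m=\eps_{(m,m')}\eps_{(m',m)}$, supplies a step the paper leaves implicit: $GL(n,O_L)$-invariance already yields the $\End(O_L^n)\tens Id_{V_2}$-invariance that Theorem~\ref{thm:split linear} actually assumes, and it does so without dividing by $2$.
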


\subsection{Another case}

\begin{theorem}\label{thm:C}
Let~$L_1,\ldots,L_f$ be finite extensions of~$\Q$ or of~$\Q_\ell$, and~$L:=\bigoplus_{i=1}^f L_i$. Let~$Z\leq O_L:=\bigoplus_{i=1}^f O_{L_i}$ be a subring and~$F\in\Z_{\geq1}$ be such that~$F\cdot O_L\leq Z$. 

Let~$O_V:=\bigoplus_{i=1}^f O_{L_i}^{m_i}\leq V:=\bigoplus_{i=1}^f {L_i}^{m_i}$, and~$R:=\End_{O_L}(O_V)=\bigoplus_{i=1}^f \End_{O_{L_i}}(O_{L_i}^{m_i})$, and~$R^0:=\End_{L}(V)\simeq R\tens_{O_L}L$.

Let~$\Lambda\leq O_V$ be a~$Z$-module and~$n\in\Z_{\geq1}$ be such that
\begin{equation}\label{eq:14b}
\forall N\in \Z, \forall \alpha\in R^0, \alpha(\Lambda)\leq N\cdot  O_V\Rightarrow \alpha\in \frac{1}{n}\cdot N\cdot R. 
\end{equation}

Then
\begin{equation}\label{eq:14a}
\Lambda\geq n\cdot F\cdot O_V.
\end{equation}
\end{theorem}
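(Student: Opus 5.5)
The plan is to pass from the $Z$-module $\Lambda$ to the $O_L$-module it generates, use the hypothesis~\eqref{eq:14b} only with $N=1$, and read off the inclusion prime by prime from elementary divisors.

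\textbf{Step 1: reduction to $O_L$-modules.} Let $M:=O_L\cdot\Lambda\leq O_V$ be the $O_L$-submodule generated by $\Lambda$. Since $F\cdot O_L\leq Z$ and $\Lambda$ is a $Z$-module, we get
\[
F\cdot M=(F\cdot O_L)\cdot\Lambda\leq Z\cdot\Lambda=\Lambda .
\]
So~\eqref{eq:14a} follows once we show $M\geq n\cdot O_V$.

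Next we transfer the hypothesis to $M$. The elements of $R^0$ are $L$-linear, so for $\alpha\in R^0$ we have $\alpha(M)=O_L\cdot\alpha(\Lambda)$. Since $O_V$ is an $O_L$-module, $\alpha(\Lambda)\leq O_V$ is equivalent to $\alpha(M)\leq O_V$. Taking $N=1$ in~\eqref{eq:14b}, we obtain
\[
M^\ast:=\{\alpha\in R^0\mid \alpha(M)\leq O_V\}\ \subseteq\ \tfrac1n R .
\]

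\textbf{Step 2: the full-rank case.} First suppose $M\otimes\Q=V$. Everything decomposes along the idempotents of $L=\bigoplus L_i$: we have $M=\bigoplus M_i$ with $M_i\leq O_{L_i}^{m_i}$, and $R$ and $M^\ast$ decompose compatibly. So we may assume $f=1$.

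All objects are finitely generated, so both the inclusion $nO_V\leq M$ and the hypothesis $M^\ast\subseteq\frac1n R$ can be checked after localising at each maximal ideal $\mathfrak p$ of $O_{L_1}$. This is automatic in the $\ell$-adic case and holds in the global case because $M^\ast$ localises as $(M_{\mathfrak p})^\ast$ for finitely generated $M$. Over the DVR $O_{\mathfrak p}$ with uniformiser $\pi$, the elementary divisor theorem gives a basis $e_1,\dots,e_m$ of $(O_V)_{\mathfrak p}$ with $M_{\mathfrak p}=\bigoplus\pi^{a_j}O_{\mathfrak p}e_j$.

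The endomorphism $\alpha_j:=\pi^{-a_j}E_{jj}$ (the $j$-th coordinate projector in this basis, rescaled) sends $M_{\mathfrak p}$ into $(O_V)_{\mathfrak p}$, so $\alpha_j\in M^\ast_{\mathfrak p}\subseteq\frac1n R_{\mathfrak p}$. Hence $\pi^{-a_j}\in\frac1n O_{\mathfrak p}$, that is, $\pi^{a_j}\mid n$, and therefore $n e_j\in M_{\mathfrak p}$ for every $j$. This gives $n(O_V)_{\mathfrak p}\leq M_{\mathfrak p}$ for all $\mathfrak p$, hence $nO_V\leq M$.

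\textbf{Step 3: the degenerate case.} Suppose instead $M\otimes\Q\neq V$, so that in some factor $i$ the $L_i$-span of $M_i$ is a proper subspace. Then there is a nonzero $\alpha\in R^0$ with $\alpha(M)=0$. For every $N\in\Z_{\geq1}$ we have $\alpha(\Lambda)=0\leq N\cdot O_V$, so~\eqref{eq:14b} gives $\alpha\in\bigcap_N\frac Nn R=\{0\}$, because $R$ is a lattice. This contradicts $\alpha\neq0$, so this case cannot occur.

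I expect the main obstacle to be the bookkeeping in the global case, where $O_{L_i}$ is only Dedekind and $M$ need not be free. The remedy is localisation at each prime: one checks that forming $M^\ast$ commutes with localisation and that $R_{\mathfrak p}=\End(O_{V,\mathfrak p})$, so that the projectors $\alpha_j$ built in a local basis are legitimate test endomorphisms for the hypothesis.
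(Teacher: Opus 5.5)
Your proof is correct. After the common first step it takes a different route from the paper.

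\textbf{Step 1.} This is exactly the paper's reduction: the paper also replaces $\Lambda$ by $\bigoplus_j O_{L_j}\pi_j(\Lambda)=O_L\cdot\Lambda$ at the cost of the factor $F$. You are more careful here than the paper, since you check that \eqref{eq:14b} passes from $\Lambda$ to $O_L\cdot\Lambda$ via $L$-linearity of elements of $R^0$; the paper leaves this implicit.

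\textbf{The paper's route.} From then on the paper argues globally by duality. For each $\phi$ in the dual lattice $\Lambda^\vee\leq V^\vee$, it applies \eqref{eq:14b} with $N=1$ to the rank-one map $\alpha=\phi\tens v$, where $v$ is a fixed primitive vector in each factor. It deduces $\Lambda^\vee\leq\frac1n O_V^\vee$ and dualizes back to $\Lambda\geq n\cdot O_V$.

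\textbf{Your route.} You instead localize, put $M_{\mathfrak p}\leq(O_V)_{\mathfrak p}$ in elementary-divisor form, and test with $\pi^{-a_j}E_{jj}$. These are themselves rank-one maps $\phi\tens e_j$ with $\phi=\pi^{-a_j}e_j^\ast\in(M_{\mathfrak p})^\vee$. So the test endomorphisms are essentially the same; the difference is how one returns from the dual side.

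\textbf{What each buys.} The paper's argument is shorter and needs neither localization nor the structure theorem in the Dedekind case. However, it silently uses reflexivity $\Lambda^{\vee\vee}=\Lambda$. It also tacitly assumes that $\Lambda$ spans $V$, since otherwise $\Hom_{O_L}(\Lambda,O_L)$ does not naturally sit inside $V^\vee$. Your argument makes both the full-rank case and the degenerate case explicit. The price is the localization bookkeeping, which you handle correctly via $(M^\ast)_{\mathfrak p}=(M_{\mathfrak p})^\ast$ for finitely generated $M$.

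\textbf{A small simplification of Step~3.} Varying $N$ is unnecessary. If $\alpha\neq 0$ kills $M$, then so does $t\alpha$ for every $t\in\Q$, and $N=1$ alone already forces the whole line $\Q\alpha$ into the lattice $\frac1n R$, which is impossible. This is also how the paper's duality argument absorbs the degenerate case, provided one defines $\Lambda^\vee:=\{\phi\in V^\vee\mid\phi(\Lambda)\leq O_L\}$.
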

\begin{proof} Let~$\pi_1=(1,0,\ldots,0),\ldots,\pi_f=(0,\ldots,0,1)$ be the units of the~$L_i\leq L$. Let~$\Lambda_i:=O_{L_i}\cdot \pi_i(\Lambda)$. As~$\Lambda$ is a~$Z$-module and~$F\cdot O_{L_i}\cdot \pi_i\in Z$, we have~$F\cdot \Lambda_i=F\cdot O_{L_i}\cdot \pi_i(\Lambda)\leq \Lambda$. Thus
\[
\Lambda\geq F\cdot \bigoplus_{j=1}^f \Lambda_j.
\]
It is sufficient to prove~$\bigoplus_{j=1}^f \Lambda_j\geq  n\cdot O_V.$
We may thus substitute~$F$ with~$1$ if we replace~$\Lambda$ by~$\bigoplus_{j=1}^f \Lambda_j$.

Let us view~$\Lambda^\vee:=\Hom_{O_L}(\Lambda,O_L)$ as a~$O_L$-submodule of~$V^\vee:=\Hom_{L}(\Lambda,L)$. Then~\eqref{eq:14a} is equivalent to
\begin{equation}\label{eq:16}
\Lambda^{\vee}\leq \frac{1}{n}\cdot O^\vee_V.
\end{equation}
Let~$v:=((1,0,\ldots),\ldots,(1,0,\ldots))\in V$. For~$\phi=(\phi_1,\ldots,\phi_f)\in \Lambda^\vee$, the assumption~\eqref{eq:14b} with~$\alpha:=\phi\tens v\in R_0$ and~$N=1$ gives
\[
(\phi_1\tens(1,0,\ldots),\ldots,\phi_f\tens(1,0,\ldots))
=((\phi_1,0,\ldots),\ldots,(\phi_f,0,\ldots))
  \in \frac{1}{n}\cdot R.
\]
This implies~$\phi\in \frac{1}{n}\cdot O_V^\vee$ and proves~\eqref{eq:16}.
\end{proof}
\subsection{General case}\label{app:D}

Let~$K=\Q$ or~$\Q_\ell$ and~$O_K=\Z$ or~$\Z_\ell$ respectively.
Let~$V$ be a $K$-vector space of finite dimension. 
Let~$R^0,S^0\leq \End_K(V)$ be two semisimple~$K$-algebras such that~$S^0$ is the commutant of~$R^0$. Let~$R\leq R^0$ and~$S\leq S^0$ be $O_K$-orders. 

\begin{theorem}\label{thm:D1}
Let~$W,T\leq V$ be a~$O_K$-lattices.
There exists~$C(S,R,T)$, such that
\begin{itemize}
\item if~$S\cdot W\leq W$
\item and 
\begin{equation}\label{eq:D1hyp}
\exists n\in \Z_{\geq1}, \forall N\in\Z_{\geq1},
\forall \alpha \in R^0, 
\alpha(W)\leq N\cdot T\Rightarrow n\cdot \alpha \in N\cdot R.
\end{equation}
\end{itemize}
then 
\[
W\geq C(S,R,T)\cdot n\cdot T.
\]
\end{theorem}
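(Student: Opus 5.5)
Our plan is to reduce Theorem~\ref{thm:D1} to the split situation of Theorem~\ref{thm:C} by extending scalars and comparing lattices, while keeping track of all comparison constants so that they depend only on~$(S,R,T)$.

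\emph{Step 1: reduction to a maximal order and a standard lattice.} Since~$S^0$ is semisimple, its centre is a product~$L=\prod_i L_i$ of finite extensions of~$K$, and we write~$O_L$ for its maximal order. The commutant~$R^0$ of~$S^0$ has the same centre. Let~$S_{\max}\geq S$ be a maximal order of~$S^0$ containing~$S$ and~$O_L$. Choose~$F=F(S)\in\Z_{\geq1}$ with~$F\cdot S_{\max}\leq S$. If~$S\cdot W\leq W$, then~$W':=S_{\max}\cdot W$ satisfies~$F\cdot W'\leq W\leq W'$ and is~$S_{\max}$-stable. Similarly, we replace~$T$ by an~$S_{\max}$-stable lattice~$T'$ with~$c_1 T'\leq T\leq T'$, and~$R$ by the maximal order~$R'$ of~$R^0$ stabilising~$T'$, with~$c_2R'\leq R$. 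Here~$c_1,c_2$ depend only on~$(S,R,T)$. Hypothesis~\eqref{eq:D1hyp} transfers with~$n$ replaced by~$c\,n$ for some~$c=c(S,R,T)$, because~$\alpha(W')\leq N T'$ implies~$\alpha(W)\leq NT'$, hence~$\alpha(c_1W)\leq NT$, and so on.

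\emph{Step 2: local reduction.} For~$K=\Q$, a lattice inclusion~$W\geq C n T$ can be checked after tensoring with~$\Z_\ell$ for each prime~$\ell$. For all but finitely many~$\ell$, the data~$S_\ell,R_\ell,T_\ell$ are maximal and split after an unramified extension, and the constant there should be~$1$. Only finitely many primes contribute nontrivially to~$C$. So it suffices to treat~$K=\Q_\ell$, with a constant uniform in~$\ell$ for almost all~$\ell$.

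\emph{Step 3: the Morita/split case.} Over~$\Q_\ell$, we pass to a finite extension~$L'$ splitting all the simple factors; this costs a bounded constant (the discriminant of~$O_{L'}$). After this, $S_{\max}\otimes O_{L'}$ and~$R'\otimes O_{L'}$ become conjugate to the standard orders~$\prod_i\End(O^{n_i})\otimes\mathrm{Id}$ and~$\prod_i\mathrm{Id}\otimes\End(O^{m_i})$ on~$\bigoplus_i O^{n_i}\otimes O^{m_i}$. By Theorem~\ref{thm:split linear} with~$\lambda=1$, an~$S$-stable~$W$ has the form~$\bigoplus_i O^{n_i}\otimes\Lambda_i$. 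Morita equivalence thus turns~$W$ into a module~$\Lambda=\bigoplus\Lambda_i\leq\bigoplus O^{m_i}$ over the centre. The hypothesis~\eqref{eq:D1hyp} on~$\alpha\in R^0=\prod\End(L^{m_i})$ becomes exactly hypothesis~\eqref{eq:14b} of Theorem~\ref{thm:C}. Theorem~\ref{thm:C} then gives~$\Lambda\geq nF\cdot O_V$, so~$W\geq nF\cdot T$ in the split model. Undoing the conjugations, base change and Step~1 multiplies the bound by constants depending only on~$(S,R,T)$.

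\emph{Main obstacle.} The delicate point is Step~3 for the non-split division algebras, that is, the descent from~$L'$ back to~$K$. One must check that hypothesis~\eqref{eq:D1hyp}, which is stated only for~$\alpha\in R^0$, still yields~\eqref{eq:14b} for all~$\alpha\in R^0\otimes L'$. We would handle this by writing any~$\alpha$ in an~$O_K$-basis of~$O_{L'}$ and applying the hypothesis coordinatewise, which costs the index of that basis. A second point is the uniformity in~$\ell$ needed in Step~2. Here we would use that for~$\ell$ outside a finite set, all orders are maximal and~$T_\ell$ is stable under both, so the constants in Steps~1 and~3 equal~$1$.
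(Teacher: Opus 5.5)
Your proposal is correct in outline, and its core is the same as the paper's: extend scalars to a splitting field, use Theorem~\ref{thm:split linear} to write the $S$-stable lattice as $\bigoplus_i O^{n_i}\otimes\Lambda_i$, translate \eqref{eq:D1hyp} into \eqref{eq:14b} for $\Lambda=\bigoplus_i\Lambda_i$, and conclude with Theorem~\ref{thm:C}.

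\textbf{Where you differ: control of the constants.} The paper neither enlarges $S$, $R$ to maximal orders nor localises. It works over $O_L$ directly, also for $K=\Q$. It sandwiches $R\otimes O_L$ and $S\otimes O_L$ between the standard split orders and their multiples by the discriminants $d_R$, $d_S$. It then runs Theorem~\ref{thm:split linear} with $\lambda=d_S$ rather than $\lambda=1$. This gives the explicit constant $C(S,R,T)=d_S^2d_Rc(T)^2$ in one pass. Your Steps~1--2 buy instead a clean use of $\lambda=1$ and the constant $1$ at almost all primes. The cost is a less explicit constant and an extra check that \eqref{eq:D1hyp} localises. It does: set $\Omega=\{\beta:\beta(W)\subseteq T\}$; the hypothesis is equivalent to $\Omega\cap R^0\subseteq\frac1n R$. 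Since $\Omega\cap R^0$ is saturated in $\Omega$, one has $(\Omega\cap R^0)\otimes\Z_\ell=(\Omega\otimes\Z_\ell)\cap(R^0\otimes\Q_\ell)$.

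\textbf{Two small repairs.}
\begin{enumerate}
\item $S_{\max}\otimes O_{L'}$ need not be conjugate to the standard order. It fails to be maximal when the centre is ramified at $\ell$ or a local division algebra occurs. At those finitely many primes you must first enlarge $W_{L'}$ and $T_{L'}$ by a maximal order of the split algebra, at bounded cost, or use $\lambda\neq1$ as the paper does.
\item Transferring the hypothesis in Step~1 requires $cR\le R'$, not $c_2R'\le R$. Both inclusions hold for suitable constants, so this is harmless.
\end{enumerate}

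\textbf{Your ``main obstacle'' is not one.} $O_{L'}$ is free over $O_K$. Writing $\alpha=\sum_j\alpha_j\otimes b_j$ in an integral basis gives $\alpha_j(W)\le NT$ for each $j$. So the hypothesis passes to $R^0\otimes L'$ with no loss at all. This is exactly the step \eqref{eq:alpha} that the paper calls immediate.
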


We know that~$R^0$ is the commutant of~$S^0$, and that, for some finite extension~$L/K$, 
one can decompose~$V_L:=V\tens_K L$ as
\[
V_L\simeq \bigoplus_{i=1}^k L^{n_i}\tens L^{m_i}
\]
in such a way that~$R_L:=R\tens L$ and~$S_L:=S\tens L$ act as
\[
\prod_{i=1}^k \End(L^{n_i})\tens Id_{L^{m_i}}\text{ and }
\prod_{i=1}^k  Id_{L^{n_i}} \tens \End(L^{m_i}).
\]
and that~$R$ and~$S$, acting on~$ \bigoplus_{i=1}^k L^{n_i}$ and on~$\bigoplus_{i=1}^k L^{m_i}$,
preserve the integral structures~$ \bigoplus_{i=1}^k O_L^{n_i}$ and on~$\bigoplus_{i=1}^k O_L^{m_i}$.

Let~$R^\bot\leq R^0$ denote the orthogonal of~$R\leq R^0$ with respect to the~$K$-bilinear trace form~$R_0\times R_0\to K$.
Recall that~$R\leq R^\bot$ and that the discriminant, say~$d_R$, of~$R$ is~$[R^\bot:R]$. In particular,~$R\geq d_R\cdot R^\bot$. In~$R_L$ we have
\[
(R\tens O_L)^\bot=R^\bot\tens O_L,
\]
with respect to the~$L$-bilinear trace form~$R_L\times R_L\to L$. We also have
\begin{multline}
d_R\cdot R^\bot\tens O_L
\leq R\tens O_L\leq \prod^k_{i=1}\End(O_L^{n_i})=\left(\prod^k_{i=1}\End(O_L^{n_i})\right)^\bot
\\
\leq (R\tens O_L)^\bot = R^\bot\tens O_L.
\end{multline}
We deduce
\begin{equation}\label{eq:b}
d_R\cdot \prod^k_{i=1}\End(O_L^{n_i})\leq  R\tens O_L\leq \prod^k_{i=1}\End(O_L^{n_i}).
\end{equation}
Similarly,
\[
d_S\cdot \prod^k_{i=1}\End(O_L^{m_i})\leq  S\tens O_L\leq \prod^k_{i=1}\End(O_L^{m_i}).
\]

Let~$W$ and~$T$ satisfy the assumption in Theorem~\ref{thm:D1}. Let~$W'=W\tens O_L$ and~$S'=S\tens O_L$
and~$E= \prod^k_{i=1}Id_{n_i}\tens \End(O_L^{m_i})$ and~$\lambda=d_S$. We have
\[
\lambda\cdot E\cdot W'\leq S'\cdot W' \leq W'.
\]
We may thus apply Theorem~\ref{thm:split linear}. We deduce
\begin{equation}\label{eq:a}
d_S\cdot\bigoplus_{i=1}^k O_L^{n_i}\tens_{O_L}\Lambda_i\leq
W'\leq\frac{1}{d_S}\cdot  \bigoplus_{i=1}^k O_L^{n_i}\tens_{O_L}\Lambda_i
\end{equation}
There exists~$c(T)\in\Z_{\geq1}$ such that
\begin{equation}\label{eq:c}
c(T)\cdot T\tens O_L\leq \bigoplus_{i=1}^k O_L^{n_i}\tens O_L^{m_i}.
\end{equation}

We observe that, for all~$m\in\Z_{\geq1}$,
\begin{align}
\bigoplus_{i=1}^k \Lambda_i
\geq m\cdot \bigoplus_{i=1}^k O_L^{m_i}
&\Rightarrow
\bigoplus_{i=1}^k O_L^{n_i}\tens_{O_L}\Lambda_i
\geq m\cdot \bigoplus_{i=1}^k O_L^{n_i}\tens O_L^{m_i}
\\
&\Rightarrow
W'\geq m\cdot d_S\cdot  \bigoplus_{i=1}^k O_L^{n_i}\tens O_L^{m_i}
\\
&\Rightarrow
W\tens O_L=W'\geq m\cdot c(T)\cdot d_S\cdot T\tens O_L
\\
&\Leftrightarrow 
W\geq m\cdot c(T)\cdot d_S\cdot  T.
\end{align}

Let~$\phi(W,T,R,n)$, where~$n\in\Z_{\geq1}$, denote the formula~\eqref{eq:D1hyp}.

Let~$\Omega:=\{\beta\in \End(V)|\beta(W)\subseteq T\}\simeq W^\vee\tens T$. This is a~$O_K$-lattice of~$\End(V)\simeq V^\vee\tens V$. Then we have
\[
\phi(W,T,R,n)\Leftrightarrow \Omega\cap R^0\subseteq \frac{1}{n}\cdot R.
\]
Let~$R':=R\tens O_L$, and~$T':=T\tens O_L$ and~$\Omega'=W'\tens_{O_L}T'\simeq \Omega\tens O_L\leq \End(V_L)$.
The following is immediate:
\begin{equation}\label{eq:alpha}
\phi(W,T,R,n)\Rightarrow \phi(W',T',R',n).
\end{equation}
Let~$W'':=
\bigoplus_{i=1}^k O_L^{n_i}\tens_{O_L}\Lambda_i$ and~$T'':=\bigoplus_{i=1}^k O_L^{n_i}\tens O_L^{m_i}$
and~$R'':=\prod^k_{i=1}\End(O_L^{n_i})$.
Using~\eqref{eq:a} and~\eqref{eq:b} and~\eqref{eq:c}, we get
\begin{equation}\label{eq:beta}
\phi(W',T',R',n)\Rightarrow \phi(W'',T'',R'',n\cdot d_S\cdot d_R\cdot c(T)).
\end{equation}
Let~$W''':=\bigoplus_{i=1}^k \Lambda_i$ and~$T''':=\bigoplus_{i=1}^k O_L^{m_i}$. 
We also note that
\begin{equation}\label{eq:gamma}
\phi(W'',T'',R'',n\cdot d_S\cdot d_R\cdot c(T))\Leftrightarrow 
\phi(W''',T''',R'',n\cdot d_S\cdot d_R\cdot c(T)).
\end{equation}
Let~$Z=\bigoplus_{i=1}^k  O_L\leq R''=\bigoplus_{i=1}^k \End(O_L^{m_i})$ be the centre of~$R''$.
We note that the~$\Lambda_i$ are~$O_L$-submodules of~$O_L^{m_i}$. Therefore~$W''':=\Lambda_i$ is a~$Z$-submodule
of~$\bigoplus_{i=1}^k O_L^{m_i}$.

We are ready to finish the proof of Theorem~\ref{thm:D1}. By~\eqref{eq:D1hyp}, the formula~$\phi(W,T,R,n)$ holds true.
By~\eqref{eq:alpha},~\eqref{eq:beta},~\eqref{eq:gamma} and the assumption~\eqref{eq:D1hyp}, we have~$\phi(W''',T''',R'',n\cdot d_S\cdot d_R\cdot c(T))$. This implies~\eqref{eq:14b}. Moreover,~$W'''$ is a~$Z$-module with~$F=1$. We can thus apply Theorem~\ref{thm:C}. We deduce
\[
W'''\geq n\cdot d_S\cdot d_R\cdot c(T)\cdot T'''.
\]
From~\eqref{eq:a} and~\eqref{eq:c}, we conclude
\[
W\geq n\cdot {d_S}^2\cdot d_R\cdot c(T)^2\cdot  T.
\]
This proves the conclusion of Theorem~\ref{thm:D1}, with
\begin{equation}
C(S,R,T):={d_S}^2\cdot d_R\cdot c(T)^2.
\end{equation}

\section{On abelian varieties}

Let~$A_P,B\leq A$ be abelian varieties over~$\C$, and let~$P\in A_P$ be such that~$\Z\cdot P$ is Zariski dense in~$A_P$. Let~$\pi:A\to A':=A/B$ be the quotient map.

We recall that there exists~$B'\leq A$ such that~\(
A=B'+B\) and \(B\cap B'\) is finite.
There exists thus~$n\in\Z_{\geq1}$ be such that
\begin{equation}\label{eq:n}
B\cap B'\leq A[n]. 
\end{equation}

\begin{theorem}\label{thm:E1}
 Consider~$Q\in A$ and~$T\in {A}_{tors}$ and~$\phi\in\Hom(A_P,A)$ such that~$d\cdot Q=\phi(P)+T$ for some~$d\in\Z_{\geq1}$. Let~$d'|d$ be such that
\[
\pi\circ \phi(A_P[d])\leq A'[d'].
\]
Then there exists~$Q'\in Q+B$ and~$T'\in {A}_{tors}$ and~$\phi'\in \Hom(A_P,A)$ such that
\[
n\cdot d'\cdot Q'=\phi'(P)+T'.
\]
\end{theorem}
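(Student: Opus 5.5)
The idea is to move $Q$ inside its coset $Q+B$ so as to strip off the part of $d\cdot Q$ that $\pi$ does not see. We use the decomposition $A=B'+B$ with $B\cap B'\leq A[n]$ from~\eqref{eq:n}. The obstruction to dividing $d$ down to $d'$ lives in the image of $\phi$ modulo $B$.

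Write $d=d'\cdot m$. First we set up the projection onto $B'$. Let $\beta:B\times B'\to A$ be the addition map. It is an isogeny with kernel $\{(x,-x)\mid x\in B\cap B'\}\leq (B\times B')[n]$. Hence there is an isogeny $\gamma:A\to B\times B'$ with $\beta\circ\gamma=n\cdot \mathrm{id}_A$. Let $p_B,p_{B'}$ be the two components of $\gamma$, so that $n\cdot x=p_B(x)+p_{B'}(x)$ with $p_B(x)\in B$ and $p_{B'}(x)\in B'$. Since $\pi|_{B'}:B'\to A'$ is an isogeny whose kernel $B\cap B'$ is contained in $B'[n]$, we have
\[
\pi\circ\phi(A_P[d])\leq A'[d']\ \Longrightarrow\ p_{B'}\circ\phi(A_P[d])\leq B'[n d'].
\]
Indeed, for $x\in A_P[d]$ the point $y:=p_{B'}(\phi(x))\in B'$ satisfies $\pi(y)=n\,\pi(\phi(x))$, which is killed by $d'$. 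Hence $d'y\in\ker(\pi|_{B'})\subseteq B'[n]$, and so $nd'y=0$.

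Next we divide $\psi:=p_{B'}\circ\phi\in\Hom(A_P,B')$. The main step is the following: if $\psi(A_P[d])\leq B'[nd']$, then $nd'\cdot\psi$ is divisible by $d$ in $\Hom(A_P,B')$. Indeed, $\ker([d])=A_P[d]\subseteq\ker(nd'\psi)$, so $nd'\psi$ factors through the isogeny $[d]:A_P\to A_P$. That is, $nd'\psi=\chi\circ[d]=d\chi$ for some $\chi\in\Hom(A_P,B')$. Since $d=d'm$, this gives $n\psi=m\chi$ up to $d'$-torsion in $\Hom$. As $\Hom$ is torsion free, $n\psi=m\chi$ exactly. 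I expect this bookkeeping to be the only delicate point, namely checking that the power of $n$ is exactly one.

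Now we construct $Q'$. Apply $\gamma$ to $d\cdot Q=\phi(P)+T$ and sum the components:
\[
n d\cdot Q=p_B(\phi(P))+p_B(T)+m\,\chi(P)+p_{B'}(T).
\]
Choose $R\in A$ with $d'R=\chi(P)$. Then $m\chi(P)=d\cdot R$, and with $b:=p_B(\phi(P))\in B$ we get
\[
d\bigl(nQ-R\bigr)=b+T_1,\qquad T_1\in A_{tors}.
\]
Pick $b_1\in B$ with $d\,b_1=b$. Then $nQ-R-b_1$ is torsion. Put $Q':=Q-b_2$ for some $b_2\in B$ with $n b_2=b_1$; then $Q'\in Q+B$. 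Finally,
\[
n d' Q'=d'(nQ-b_1)=d'R+\text{torsion}=\chi(P)+T'.
\]
So $\phi':=\chi$, viewed in $\Hom(A_P,A)$ through $B'\leq A$, together with this $T'$, satisfies the required identity $n\cdot d'\cdot Q'=\phi'(P)+T'$.
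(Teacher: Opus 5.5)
There is a factor-of-$n$ slip at exactly the point you flagged. Your divisibility step only gives $n\psi=m\chi$. In the displayed identity for $nd\cdot Q$, however, you replace the $B'$-component $p_{B'}(\phi(P))=\psi(P)$ by $m\chi(P)$, and $m\chi(P)$ equals $n\psi(P)$, not $\psi(P)$. What your argument actually gives is
\[
n^2d\cdot Q=n\,p_B(\phi(P))+m\chi(P)+n^2T.
\]
Running your remaining steps on this yields only $n^2d'\cdot Q'=\chi(P)+T'$. So, as written, you prove the weaker statement with $n^2$ in place of $n$. The loss comes from the bound $\psi(A_P[d])\leq B'[nd']$: deriving it through $\ker(\pi|_{B'})\leq B'[n]$ throws away a factor $n$ that you never recover.

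The missing observation is that $p_{B'}$ vanishes on $B$. For $x\in B$ one has $p_{B'}(x)=nx-p_B(x)\in B\cap B'$, which is finite, and $B$ is connected, so $p_{B'}|_B=0$. Hence $p_{B'}=\sigma\circ\pi$ for some $\sigma\in\Hom(A',B')$; in fact $\pi\circ\sigma=n$, so this is the paper's $\sigma$. It follows that $\psi(A_P[d])=\sigma\bigl(\pi\circ\phi(A_P[d])\bigr)\leq B'[d']$. So $A_P[d]\leq\ker(d'\psi)$, whence $d'\psi=d\chi$, and torsion-freeness of $\Hom(A_P,B')$ gives $\psi=m\chi$ exactly. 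With this your display is correct, and the rest of your argument (the choices of $R$, $b_1$, $b_2$) goes through verbatim to give $nd'\cdot Q'=\chi(P)+T'$.

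Once repaired, your proof is essentially the paper's. The paper divides $\pi\circ\phi$ by $d/d'$ in $\Hom(A_P,A')$, then lifts the resulting $d'$-division point from $A'$ back to $A$ through $\sigma$ (Lemmas~\ref{lem:C2} and~\ref{lem:C3}), which costs exactly one factor of $n$. Your $\chi$ is $\sigma$ composed with that quotient homomorphism; you simply do the lifting inside $A$ via $\gamma$ instead of lifting torsion from $A'$.
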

\subsection{Auxiliary Definitions and Results}
We define
\[
\Theta:=\{\phi(P)|\phi\in \Hom(A_P,A)\}\text{ and }
\Theta':=\{\phi(P)|\phi\in \Hom(A_P,A')\}.
\]
For~$d\in\Z_{\geq1}$, let
\[
\Theta_d:=\{Q\in A|d\cdot Q\in \Theta\}\qquad\text{and}\qquad
\Theta'_d:=\{Q\in A'|d\cdot Q\in \Theta'\}.
\]
We claim that the map
\[
\Hom(A_P,A)\to \Theta\quad\text{ and }
\quad \Hom(A_P,A')\to \Theta'
\]
given by~$\phi\mapsto\phi(P)$ are bijective.
\begin{proof}The map are subjective by definition of~$\Theta$ and~$\Theta''$. Observe that~$\phi\mapsto \phi(P)$ is an additive morphism. The injectivity follows from
\[
\phi(P)=0\Rightarrow P\in \ker(\phi)\Rightarrow \ol{P\cdot \Z}^{Zar}\leq \ker(\phi)\Rightarrow \phi(A_P)=0\Rightarrow \phi=0.
\qedhere\]
\end{proof}
Recall that for an algebraic subgroup~$G$ of an abelian variety~$A$, and a morphism~$\phi:A\to B$ of abelian varieties, we have
\[
G\leq \ker(\phi)\Leftrightarrow \exists \psi\in \Hom(A/G,B), \phi=\psi\circ \pi,
\]
where~$\pi:A\to A/G$ is the quotient map. In particular, for~$d\in\Z_{\geq1}$ and~$G=A[d]$, 
then
\begin{equation}\label{eq:lift kernel isogeny}
\forall \phi\in\Hom(A,A), A[d]\leq \ker(\phi)\Leftrightarrow \phi\in d\cdot \Hom(A,A).
\end{equation}

\begin{lemma}\label{lem:C2}
Let~$n$ be as in~\eqref{eq:n}. Let~$\pi_{\star}$ be either of the maps
\[
\phi\mapsto \pi\circ \phi:\Hom(A_P,A)\to\Hom(A_P,A')\text{ and }\phi(P)\mapsto \pi\circ\phi(P):\Theta\to \Theta'.
\]
Then for any~$x\in \coker(\pi_{\star})$, we have~$n\cdot x=0$.
\end{lemma}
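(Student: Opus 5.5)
We construct a one-sided inverse of $\pi$ up to multiplication by $n$, using the complement $B'$ of \eqref{eq:n}; both statements then follow.

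Let $\pi':=\pi|_{B'}:B'\to A'=A/B$. Since $A=B'+B$, the map $\pi'$ is surjective, and its kernel is $B\cap B'\leq A[n]$. So $\pi'$ is an isogeny whose kernel is killed by $n$. Applying the factorisation property recalled before \eqref{eq:lift kernel isogeny} to $G=\ker(\pi')\leq B'[n]$, we have $\ker(\pi')\leq \ker([n]_{B'})$. Hence there is a homomorphism $\sigma:A'\to B'$ with
\[
\sigma\circ\pi'=[n]_{B'}.
\]
Composing on the right with the surjection $\pi'$ gives $\pi'\circ\sigma\circ\pi'=\pi'\circ[n]_{B'}=[n]_{A'}\circ\pi'$, so $\pi'\circ\sigma=[n]_{A'}$. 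Let $\iota:B'\hookrightarrow A$ be the inclusion and set $s:=\iota\circ\sigma\in\Hom(A',A)$. Then
\[
\pi\circ s=\pi'\circ\sigma=[n]_{A'}.
\]

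First consider the map $\pi_\star:\Hom(A_P,A)\to\Hom(A_P,A')$. Take any $\psi\in\Hom(A_P,A')$ and put $\phi:=s\circ\psi\in\Hom(A_P,A)$. Then $\pi_\star(\phi)=\pi\circ s\circ\psi=n\cdot\psi$. Thus $n\cdot\Hom(A_P,A')$ lies in the image of $\pi_\star$, so every class in $\coker(\pi_\star)$ is killed by $n$.

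Now consider $\pi_\star:\Theta\to\Theta'$. By the bijections $\phi\mapsto\phi(P)$ established just above, this map is identified with the first one. Concretely, $\pi(\phi(P))=(\pi\circ\phi)(P)$, and $\Theta'=\{\psi(P)\mid\psi\in\Hom(A_P,A')\}$. For $\psi(P)\in\Theta'$ we therefore get
\[
n\cdot\psi(P)=(\pi\circ s\circ\psi)(P)=\pi_\star\bigl((s\circ\psi)(P)\bigr),
\]
so the cokernel is again killed by $n$.

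The only step needing care is the existence of $\sigma$, i.e. factoring $[n]_{B'}$ through the isogeny $\pi'$. This follows directly from $\ker(\pi')\leq B'[n]$ together with the recalled factorisation criterion for quotients by algebraic subgroups.
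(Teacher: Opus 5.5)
Your proof is correct and follows the paper's argument. Both restrict $\pi$ to the complement $B'$ and use $\ker(\pi|_{B'})=B\cap B'\leq B'[n]$ to get $\sigma:A'\to B'$ with $\pi\circ\sigma=[n]_{A'}$; then $\phi=\sigma\circ\psi$ shows that $n\cdot\psi$, resp.\ $n\cdot\psi(P)$, lies in the image of $\pi_\star$. You also spell out how $\pi|_{B'}\circ\sigma=[n]_{A'}$ follows from $\sigma\circ\pi|_{B'}=[n]_{B'}$ and surjectivity, which the paper only asserts, and your conclusion $n\cdot\psi=\pi_\star(s\circ\psi)$ corrects the paper's slip ``$\phi'=\pi_\star(\phi)$''.
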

\begin{proof}
Let~$B'\leq A$ be as in~\eqref{eq:n}, and let~$p=\pi|_{B'}: B' \to A'$. We recall that~$\ker(p) = B \cap B'\leq B'[n]$ by~\eqref{eq:n}.  It follows that there exists a homomorphism~$\sigma: A' \to B'$ such that $p \circ \sigma=n Id_{A'}$ is the multiplication-by-$n$ map on~$A'$. 

For~$\phi'\in \Hom(A_P,A')$, we have 
\[
n\cdot \phi'=\pi\circ \sigma\circ \phi',
\]
and thus~$\phi'=\pi_\star(\phi)$ for~$\phi=\sigma\circ \phi'$. For~$\phi'(P)\in \Theta'$, we have~$n\cdot \phi'(P)=\pi( \phi(P))$. The lemma follows.
\end{proof}
%

\begin{lemma}\label{lem:C3}
For every~$d\in\Z_{\geq1}$
\begin{enumerate}
\item the map~$A[d]\to A'[d]$ is surjective \label{enum:quotient et torsion}
\item \label{enum:2}
and
\[
\forall Q'\in\Theta_d', \exists \widetilde{Q'}\in \Theta_{n\cdot d}, \pi(\widetilde{Q'})=Q'. 
\]
\end{enumerate}
\end{lemma}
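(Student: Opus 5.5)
For part~\eqref{enum:quotient et torsion}, take $T'\in A'[d]$ and lift it to some $Q\in A$ with $\pi(Q)=T'$; since $\pi:A\to A'$ is surjective and $B$ is divisible this is always possible. Then $d\cdot Q\in B$, and because multiplication by $d$ is surjective on the abelian variety $B$, there is $b\in B$ with $d\cdot b=d\cdot Q$. The point $Q-b$ lies in $A[d]$ and satisfies $\pi(Q-b)=T'$. Equivalently, one can argue from the exact sequence $0\to B\to A\to A'\to 0$ and the snake lemma for multiplication by $d$: the obstruction to surjectivity lies in $B/dB=0$.

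For part~\eqref{enum:2}, let $Q'\in\Theta'_d$ and write $d\cdot Q'=\phi'(P)$ with $\phi'\in\Hom(A_P,A')$, using the bijection $\Hom(A_P,A')\to\Theta'$. By Lemma~\ref{lem:C2}, or more precisely its proof, we have $n\cdot\phi'=\pi\circ\phi$ with $\phi:=\sigma\circ\phi'\in\Hom(A_P,A)$. Choose any lift $Q_0\in A$ of $Q'$. Then
\[
\pi(n d\cdot Q_0-\phi(P))=n\cdot\phi'(P)-n\cdot\phi'(P)=0,
\]
so $b_0:=nd\cdot Q_0-\phi(P)\in B$. By divisibility of $B$, pick $b\in B$ with $nd\cdot b=b_0$, and set $\widetilde{Q'}:=Q_0-b$. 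Then $\pi(\widetilde{Q'})=Q'$ and $nd\cdot\widetilde{Q'}=\phi(P)\in\Theta$, that is, $\widetilde{Q'}\in\Theta_{nd}$.

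The only delicate point is making sure that the lift of $n\cdot\phi'$ really factors through $\pi$ with a homomorphism $A_P\to A$. This is exactly what the section $\sigma:A'\to B'$ with $p\circ\sigma=n\,Id_{A'}$ from the proof of Lemma~\ref{lem:C2} provides. Everything else uses only the divisibility of the abelian variety $B$.
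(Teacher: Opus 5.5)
Your proof is correct and, for part (2), follows the paper's route: the essential input is the same homomorphism $\phi=\sigma\circ\phi'$ with $\pi\circ\phi=n\cdot\phi'$, taken from the proof of Lemma~\ref{lem:C2}. The differences are only in the correction steps. In (2) you fix an arbitrary lift $Q_0$ of $Q'$ by subtracting an $nd$-th root in $B$ of $nd\cdot Q_0-\phi(P)$, whereas the paper takes $Q$ with $nQ=\sigma(Q')$ and corrects it by an $n$-torsion point lifted using (1). In (1) you use the same divisibility of $B$ in place of the paper's splitting of $0\to B\to A\to A'\to 0$ as real Lie groups. Your version makes (2) independent of (1), and it avoids a sign slip in the paper, where the lift should be $Q-T$ rather than $Q+T$.
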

\begin{proof}It is a fact that the exact sequence~$0\to B\to A\to A'\to 0$ is split in the category 
of real Lie groups, and a fortiori as a sequence of abstract groups. The first assertion follows.

Let us prove the second assertion. Let~$Q'\in \Theta'_{d}$ and choose~$\phi'\in \Hom(A_P,A')$ such that~$d Q'=\phi'(P)$. Let~$\sigma$ be as in the proof of Lemma~\ref{lem:C2} and~$\phi:=\sigma\circ \phi'$, and let~$Q$ be such that~$nQ=\sigma(Q')$. 
We have
\[
\pi(n(Q))=\pi(\sigma(Q'))=nQ'.
\]
Therefore~$\pi(Q)-Q'=T'$ for some~$T'\in A'[n]$. By~\eqref{enum:quotient et torsion}, we have~$T'=\pi(T)$ for some~$T\in A[n]$. We have~$Q+T\in \Theta_{nd}$ because
\[
nd (Q+T)=d\sigma(Q')=\sigma(dQ')= \sigma\circ \phi'(P)=\phi(P).
\]
This proves the lemma with~$\wt{Q'}=Q+T$.
\end{proof}
\subsection{Proof of Theorem~\ref{thm:E1}}

From~$\pi\circ \phi(A_P[d])\leq A'[d']$ follows~$A_P[d/d']\leq \ker(\pi\circ \phi)$. This implies that there exists~$\phi':A_P\to A'$ such that~$\pi \circ \phi=(d/d')\cdot \phi'$. Therefore
\[
(d/d')\cdot \phi'(P)=\pi\circ \phi(P)=d\cdot \pi(Q)-\pi(T)=(d/d')\cdot d'\cdot \pi(Q)-\pi(T).
\]
Thus~$(d/d')\cdot (\phi'(P)-d'\cdot \pi(Q))=\pi(T)\in A'_{tors}$, and~$d'\cdot \pi(Q)\in \phi'(P)+A'_{tors}$. There exists thus~$T'\in A'_{tors}$ such that~$d'\cdot (\pi(Q)+T')=\phi'(P)$. Since~$Q':=\pi(Q)+T'\in \Theta'_{d'}$, Lemma~\ref{lem:C3} implies that there exists~$\wt{Q'}\in \Theta_{nd'}$ such that~$\pi(\wt{Q'})=\pi(Q)+T'$. There exists~$\wt{T'}\in A_{tors}$ such that~$\pi(\wt{T'})=T'$. 
Let~$\wt{Q}:=\wt{Q'}-\wt{T'}$. Then~$\pi(\wt{Q})=\pi(Q)$, that is~$\wt{Q}\in Q+B$. Since~$\wt{Q'}\in \Theta_{nd'}$, there exists~$\wt{\phi}\in \Hom(A_P,A)$ such that~$nd'\wt{Q'}=\wt{\phi}(P)$. Let~$\wt{T}=-nd'\wt{T'}\in A_{tors}$. We conclude~$nd'\wt{Q}=\wt{\phi}(P)+\wt{T}$. This proves Theorem~\ref{thm:E1}.

\section{On the Deligne-Shimura Reciprocity law}\label{sec:appendix reciprocity}
\subsection{Uniformity properties of the reflex norm}

Let~$(G,X)$ be a Shimura datum and let~$(H,X_H)$ be a subdatum. We condider~$T:=Z(H)^0$ the connected centre of~$H$ and~$C:=H^{ab}$ the maximal abelian quotient of~$H$.

Let~$x\in X_H$ and consider the special Shimura datum~$(C,x^{ab})$. Let~$\mu:GL(1)_\C\to C_\C$ be the compositum~$\C^\times \to \mathbf{S}(\C)\xrightarrow{x^{ab}}C(\C)$. Let~$Gal(\ol{\Q}/\Q)$ act on~$Y(C_{\ol{\Q}})$ and let~$E:=E(C,x^{ab})$ be such that~$Gal(\ol{\Q}/E)$ is the stabiliser of~$\mu$.

Then~$T_E:=Res_{E/\Q}GL(1)$ has a natural basis~$e_{\sigma(\mu)}$ indexed by the~$\sigma(\mu)\in Gal(\ol{\Q}/\Q)\cdot \mu$. Recall that~$Y(C)\tens\Q$ is $\Q$-linearly generated by the conjugates~$\sigma(\mu)\in Gal(\ol{\Q}/\Q)\cdot \mu$. The natural~$\Z$-linear map
\[
Y(T_E)\simeq \bigoplus_{\sigma(\mu)\in Gal(\ol{\Q}/\Q)\cdot \mu} \Z \cdot e_{\sigma(\mu)}
\to 
\sum_{\sigma(\mu)\in Gal(\ol{\Q}/\Q)\cdot \mu} \Z\cdot \sigma(\mu)\hookrightarrow Y(C)
\]
corresponds to a morphism of $\Q$-algebraic tori~$T_E\to C$. Let $(e^*_{\sigma(\mu)})_{\sigma(\mu)\in Gal(\ol{\Q}/\Q)\cdot \mu}$ be the basis of~$X(T_E)$ dual to the basis~$(e_{\sigma(\mu)})_{\sigma(\mu)\in Gal(\ol{\Q}/\Q)\cdot \mu}$.
By~\cite[Lem. 2.6 and \S2,1]{UY}, there exists~$N\in\Z_{\geq1}$ depending only on~$(G,X)$ and a subset
\[
F\subseteq \bigoplus_{\sigma(\mu)\in Gal(\ol{\Q}/\Q)\cdot \mu} \{-N;\ldots;N\}\cdot e^*_{\sigma(\mu)}\subseteq X(T_E),
\]
 such that the image of
\[
X(C)\to X(T_E)
\]
is generated by~$F$.
\begin{theorem}\label{thm:F1}
 Let~$(G,X)$ be a Shimura datum. 
There exists~$e(G,X)$ such that for every Shimura subdatum~$(H,X_H)$ of~$(G,X)$ and Hodge generic~$x\in X_H$, the reciprocity norm morphism
\[
N:Res_{E(H^{ab},\{x^{ab}\})/\Q}GL(1)\to H^{ab}
\]
satifies
\[
\forall g\in H^{ab}(\wh{\Z}), g^{e(G,X)}\in  N\left(\wh{O_{E(H^{ab},\{x^{ab}\})}}^\times\right),
\]
where~$H^{ab}(\wh{\Z})\leq H^{ab}(\A_f)$ is the maximal compact subgroup.
\end{theorem}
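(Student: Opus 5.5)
Write $E:=E(H^{ab},\{x^{ab}\})$ and $C:=H^{ab}$. The plan is to reduce the statement to a question about lattices of cocharacters, using the integral data recalled just before the theorem (from \cite[Lem. 2.6]{UY}), and to make every constant depend only on $(G,X)$. First, the degree $[E:\Q]$ and $\dim C$ are bounded in terms of $(G,X)$ alone. Indeed, $C$ is a quotient of $H$, whose dimension is at most $\dim G$, and $E$ is contained in the Galois closure of a splitting field of a torus of bounded dimension whose cocharacter $\mu$ has bounded Galois orbit. So the Galois orbit $Gal(\ol{\Q}/\Q)\cdot\mu$ has cardinality $r\le r(G,X)$. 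The reflex norm $N:T_E\to C$ is, on cocharacters, the map $Y(T_E)=\bigoplus\Z e_{\sigma(\mu)}\to Y(C)$ described above. Its image is a finite-index sublattice of $Y(C)$, because the conjugates of $\mu$ span $Y(C)\tens\Q$ ($C$ being the Mumford–Tate torus of the Hodge generic $x^{ab}$).

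The key step is to bound the index $i:=[Y(C):N_*(Y(T_E))]$ uniformly. Dually, the image of $X(C)\to X(T_E)$ is generated by the set $F$ of vectors with coordinates in $\{-N,\ldots,N\}$ in a basis of rank $r\le r(G,X)$. There are therefore only finitely many possibilities, up to the combinatorics of the index set, for the integral matrix of $X(C)\to X(T_E)$. This map is injective, since $N$ is surjective as a morphism of tori. The torsion of its cokernel, equivalently the elementary divisors, is then bounded by a constant $i(G,X)$, because it is determined by which of the finitely many configurations of $F$ occurs. From this we get an isogeny-type factorisation: there is a morphism of tori $\nu:C\to T_E$ such that $N\circ\nu=[i]$ on $C$. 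To build it, I would take the dual statement that $i\cdot X(T_E)\cap \mathrm{im}\subseteq$ the saturation, and transpose.

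Next I pass to $\wh{\Z}$-points. Since $\nu$ is defined over $\Q$ and $C$ and $T_E$ have models over $\Z$ by their character lattices, $\nu$ maps the maximal compact subgroup $C(\wh{\Z})$ into $T_E(\wh{\Z})=\wh{O_E}^\times$, possibly after enlarging the exponent by a bounded constant. That constant accounts for the difference between the maximal compact subgroup and the $\wh{\Z}$-points of the Néron-type model, and it is controlled by the splitting degree, which is bounded in terms of $(G,X)$. Then for $g\in C(\wh{\Z})$ we get $g^{i}=N(\nu(g))\in N(\wh{O_E}^\times)$. Setting $e(G,X):=i(G,X)\cdot c(G,X)$ gives the theorem.

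I expect the main obstacle to be the uniformity in the integral step, namely making sure that $\nu(C(\wh{\Z}))\subseteq\wh{O_E}^\times$ up to a bounded power. The issue is that the maximal compact subgroup of $C(\Q_\ell)$ can be larger than the image of the $\Z_\ell$-points of the torus model defined by $X(C)$ at ramified primes. To handle this, I would use that a power $g^{m}$, with $m$ bounded by the order of the finite group of components of the Néron model, lies in the connected part. The order of that group is bounded by the degree of the splitting field, which is at most $r(G,X)!$.
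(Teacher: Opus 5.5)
Your overall architecture matches the paper's. You use the bounded data $(r,N,F)$ from \cite{UY} to reduce to finitely many configurations, build a quasi-inverse $\nu$ of the reflex norm up to a bounded power, and then pass to maximal compact subgroups. There is, however, a genuine gap in the step that produces $\nu$.

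You need $\nu:C\to T_E$ to be defined over $\Q$, since you use this afterwards. So $\nu_*:Y(C)\to Y(T_E)$ must be Galois-equivariant. A bound on $i=[Y(C):N_*(Y(T_E))]$ (equivalently on the torsion of $\mathrm{coker}(N^*)$) only gives a $\Z$-linear, non-equivariant $\nu_*$ with $N_*\circ\nu_*=i$. A $\Q$-rational $\nu$ with $N\circ\nu=[i]$ need not exist.

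For example, take $H=C$ the norm-one torus of an imaginary quadratic field $K$, i.e. the image of $Res_{K/\Q}GL(1)$ in $PGL(2)$ at a CM point. Then $Y(C)=\Z$ with complex conjugation acting by $-1$, $E=K$, and $N(z)=z/\bar z$. Here $N_*(a e_\mu+b e_{\bar\mu})=a-b$ is onto, so $i=1$. But an equivariant $\nu_*$ must send $1$ to some $(a,-a)$, which forces $N\circ\nu=[2a]$. Concretely, for $K=\Q(i)$ the element $i\in C(\Z_2)$ is not of the form $u/\bar u$ with $u\in\Z_2[i]^\times$, since $u=i\bar u$ forces $u\in(1+i)\Z_2$.

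The repair is cheap. The Galois action on both lattices factors through the image $\bar G\le S_r$ of Galois acting on the orbit of $\mu$, because that orbit spans $Y(C)\tens\Q$. Replace $\nu_*$ by $\sum_{g\in\bar G}g\circ\nu_*\circ g^{-1}$. This gives a $\Q$-morphism with $N\circ\nu=[\#\bar G\cdot i]$, and $\#\bar G\le r!$ is bounded.

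The paper reaches the same point by a different route. It picks a $\bar G$-stable complement to the span of $F$, which yields a $\Q$-subtorus $S'\le T_E$ mapping isogenously onto $H^{ab}$. It then uses the dual isogeny (Propositions~\ref{prop:F2} and~\ref{prop:F3}), with uniformity coming from the finitely many pairs $(\bar G,F)$. Also, your phrase about $i\cdot X(T_E)\cap\mathrm{im}$ lying in the saturation is vacuous. What you actually use is $i\cdot Y(C)\subseteq N_*(Y(T_E))$, which lets you lift a basis of $Y(C)$.

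The obstacle you single out in your last paragraph is not real, and you can drop the N\'eron-model detour. Once $\nu$ is a $\Q$-morphism, it induces a continuous homomorphism $C(\A_f)\to T_E(\A_f)$. Hence $\nu(C(\wh{\Z}))$ is a compact subgroup of $T_E(\A_f)$, so it lies in the unique maximal compact subgroup $\wh{O_E}^\times$. This is exactly the compactness argument in the proof of Proposition~\ref{prop:F2}, and no extra constant $c(G,X)$ is needed. With these two changes, $e(G,X)=r(G,X)!\cdot i(G,X)$ works.
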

\begin{proof} This follows from the preceding discussion and Proposition~\ref{prop:F3} below.
\end{proof}
\subsection{}
Let~$\pi:T\to S$ be a surjective morphism of~$\Q$-algebraic groups. Recall that there exists~$S'\leq T$ a subtorus such that~$\alpha:S'\to T\to S$ is an isogeny. There exists thus an~$e\in\Z_{\geq1}$ such that~$\ker(\alpha)=S\cap \ker(\pi)\leq T[e]$.


Let~$T(\wh{\Z})\leq T(\A_f)$ and~$S(\wh{\Z})\leq S(\A_f)$ be the maximal compact subgroups. 
\begin{proposition}\label{prop:F2}
We have
\[
\forall s\in S(\wh{\Z}), s^e\in \pi(T(\wh{\Z})).
\]
\end{proposition}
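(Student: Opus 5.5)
We construct an explicit preimage of $s^e$ using the isogeny $\alpha:S'\to S$ and a dual isogeny. Since $\ker(\alpha)=S'\cap\ker(\pi)\leq S'[e]$ (reading the paper's ``$S\cap\ker(\pi)$'' as $S'\cap\ker(\pi)$), the $e$-th power map $[e]:S'\to S'$ kills $\ker(\alpha)$. Hence it factors as $[e]=\beta\circ\alpha$ for a unique morphism of $\Q$-tori $\beta:S\to S'$. Moreover $\alpha\circ\beta\circ\alpha=\alpha\circ[e]=[e]\circ\alpha$, and $\alpha$ is surjective as a map of algebraic groups, so
\[
\alpha\circ\beta=[e]_S .
\]
This is an identity of $\Q$-morphisms, so it holds on $\A_f$-points. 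Therefore, for every $s\in S(\A_f)$, the element $t:=\beta(s)\in S'(\A_f)\leq T(\A_f)$ satisfies $\pi(t)=\alpha(\beta(s))=s^e$.

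It remains to check that $t\in T(\wh{\Z})$ whenever $s\in S(\wh{\Z})$. The key point is that a continuous homomorphism of locally compact groups maps compact subgroups to compact subgroups. So $\beta(S(\wh{\Z}))$ is a compact subgroup of $S'(\A_f)$, hence of $T(\A_f)$. For a $\Q$-torus, the maximal compact subgroup $T(\wh{\Z})$ is the unique maximal compact subgroup. Indeed, $T(\A_f)$ is abelian, so the product of two compact subgroups is again a compact subgroup; existence of a maximal one follows from the paper's convention that $T(\wh{\Z})$ denotes it. Any compact subgroup is therefore contained in $T(\wh{\Z})$, which gives $\beta(s)\in T(\wh{\Z})$.

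The only delicate step is this identification of the maximal compact subgroup as the unique one containing every compact subgroup. If needed, we can argue prime by prime. At each prime $p$, $T(\Q_p)$ is abelian and its maximal compact subgroup is the unique one. For almost all $p$, $\beta$ extends to a morphism of the smooth models over $\Z_p$, so it maps integral points to integral points. Working with $\prod_p$, the image of $S(\wh{\Z})$ is compact and lies in the restricted product of the local maximal compact subgroups, which is $T(\wh{\Z})$.
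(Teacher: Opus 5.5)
Your proof is correct and is essentially the paper's argument. The paper also takes the dual isogeny $\alpha'\colon S\to S'$ with $\alpha\circ\alpha'=[e]_S$, notes that $\alpha'(S(\wh{\Z}))$ is a compact subgroup and hence lies in $S'(\wh{\Z})\leq T(\wh{\Z})$, and concludes that $\{s^e\mid s\in S(\wh{\Z})\}=\alpha(\alpha'(S(\wh{\Z})))\subseteq\pi(T(\wh{\Z}))$. Your explicit construction of $\beta$ (factoring $[e]_{S'}$ through $\alpha$) and your argument that every compact subgroup of the abelian group $T(\A_f)$ lies in $T(\wh{\Z})$ supply details the paper leaves implicit.
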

\begin{proof}Note that~$\alpha:S'\to T\to S$ is an isogeny, and that~$\ker(\alpha)\leq S'[e]$. Recall that there exists an isogeny (the dual isogeny)~$\alpha':S\to S'$ such that~$\alpha\circ \alpha':S\to S$ is the morphism~$s\mapsto s^e$. 

Note that~$\alpha':S(\A_f)\to S'(\A_f)$ is a continuous map of separated topological spaces. Therefore~$\alpha'(S(\wh{\Z}))$ is a compact subgroup of~$S'(\A_f)$. We have thus~$\alpha'(S(\wh{\Z}))\leq S'(\wh{\Z})\leq T(\wh{\Z})$. 

We conclude
\[
\pi(T(\wh{\Z}))\geq \alpha(S'(\wh{\Z}))\geq \alpha(\alpha'(S(\wh{\Z})))=\{s^e|s\in S(\wh{\Z})\}.\qedhere
\]
\end{proof}

\begin{proposition}\label{prop:F3}
Consider~$r,N\in\Z_{\geq1}$.
There exists~$e(r,N)$ such that for every transtive action~$Gal(\ol{\Q}/\Q)$ on a set~$E$ of cardinality~$\#\leq r$ and every~$\Z[Gal(\ol{\Q}/\Q)]$-submodule~$Y\leq \Z^E$ generated by a subset of~$\{-N;\ldots;N\}^E$, the morphism of~$\Q$-algebraic tori
\[
T\to S
\] 
corresponding to~$Y\to \Z^E$ satisfies
\[
\forall t\in S(\wh{\Z}), s^e\in \pi(T(\wh{\Z})).
\]
\end{proposition}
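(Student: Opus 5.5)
The plan is to reduce Proposition~\ref{prop:F3} to Proposition~\ref{prop:F2} by showing that only finitely many configurations occur up to isomorphism, and then taking the least common multiple of the exponents.

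First, reinterpret the data. A set $E$ of cardinality at most $r$ with a transitive $Gal(\ol{\Q}/\Q)$-action is determined, up to isomorphism, by a finite quotient $Gal(\ol{\Q}/\Q)\to \mathfrak{S}_E$ together with the induced permutation action. The lattice $\Z^E$ is the cocharacter group of the torus $T=Res_{F/\Q}GL(1)$, where $F$ is the field cut out by a point stabiliser. The submodule $Y\leq\Z^E$ is Galois stable, so it is the cocharacter group of a subtorus $S'\leq T$. Dually, the character group of $S:=T/\ker$ is a quotient of $X(T)$, which is the setting of Proposition~\ref{prop:F2}. I read the statement as applying to the surjection $\pi:T\to S$ attached to $Y$, or to $S'\to T\to T/S'^{\perp}$; either way it fits the hypothesis of Proposition~\ref{prop:F2}. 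The key point is that the integer $e$ from Proposition~\ref{prop:F2} depends only on the map of lattices with Galois action. It is defined as an integer with $\ker(\alpha)\leq S'[e]$, which can be read off from the index $[Y:Y\cap\ldots]$, that is, from the finite group of the relevant lattice cokernel's torsion.

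Second, establish finiteness of the combinatorial data. The abstract $\Z[\mathfrak{S}_E]$-module $Y$ depends only on:
\begin{itemize}
\item the image group $\Gamma\leq\mathfrak{S}_E$, with $\#E\leq r$, so finitely many choices;
\item its action on $E$;
\item a generating set drawn from the finite set $\{-N,\ldots,N\}^E$.
\end{itemize}
Hence, up to relabelling, only finitely many pairs $(\Gamma\curvearrowright E, Y\leq\Z^E)$ occur.

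The main step is to check that the exponent $e$ from Proposition~\ref{prop:F2} depends only on this combinatorial type, and not on the actual Galois action or the field $F$. I would make $e$ explicit from lattice data alone. Choose a complement: the $\Gamma$-stable sublattice $Y^{\perp}\otimes\Q\cap\Z^E$ (or a $\Gamma$-stable rational complement of $Y_\Q$ intersected with $\Z^E$) gives the isogeny complement $S'$. Then $e$ can be taken to be the exponent of the finite group $\Z^E/(Y+Y')$ (or its dual). This exponent depends only on $(\Gamma,E,Y)$, because the complement can be chosen canonically, for instance as the orthogonal complement for the standard $\Gamma$-invariant form on $\Z^E$. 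Proposition~\ref{prop:F2}'s proof then gives $s^{e}\in\pi(T(\wh{\Z}))$ for all $s\in S(\wh{\Z})$, since it only uses the dual isogeny $\alpha'$ with $\alpha\circ\alpha'=[e]$ and the compactness argument.

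Finally, set $e(r,N)$ to be the least common multiple of these exponents over the finitely many types. I expect the main obstacle to be making the complement and the exponent genuinely intrinsic to the lattice type, so that the bound is uniform. The orthogonal-complement choice should handle this, since the standard form on $\Z^E$ is Galois invariant.
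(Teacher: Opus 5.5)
Your proof is essentially the paper's: you reduce to finitely many configurations (image group in $\mathfrak{S}_E$ with $\#E\leq r$, generating set in $\{-N,\ldots,N\}^E$), choose a Galois-stable rational complement $Y'$ of $Y_\Q$ to obtain a $\Q$-subtorus $S'$ with $S'\to S$ an isogeny whose kernel exponent (that of $\Z^E/(Y+Y')$) depends only on this lattice data, and conclude by Proposition~\ref{prop:F2}. Your orthogonal complement is a canonical instance of the invariant complement the paper gets from semisimplicity, and your least common multiple is the correct way to combine the exponents (the paper's $\gcd$ should be a common multiple).
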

\begin{proof}
Choose a bijection~$E\simeq \{1;\ldots; r'\}$ and let~$G$ be the image of~$Gal(\ol{\Q}/\Q)\to S_E\simeq S_{r'}$. For each~$G$ and each subset~$F\subseteq\{-N;\ldots;N\}^E$, let
\[
\alpha(G,F):GL(1)_{\ol{\Q}}^r\to S(G,F)
\]
be the associated morphism of~$\ol{\Q}$-algebraic tori. We claim that there exists a~$G$-invariant sub-torus~$S'(r,F)\leq GL(1)_{\ol{\Q}}$ such that~$\beta(G,F):S'(G,F)\to GL(1)_{\ol{\Q}}^r\to S(G,F)$ is an isogeny.
\begin{proof}Recall that the action of~$G$ on~$\Q^r$ is semisimple. Thre is exists thus~$V(G,F)$ be a~$G$-invariant $\Q$-linear subspace such that~$\Q^r=V(G,F)+\sum_{f\in F} f\cdot Q$ and~$\{0\}=V(G,F)\cap \left(\sum_{f\in F} f\cdot Q\right)$. Let~$GL(1)_{\ol{\Q}}^r\to T(G,F)$ be the corresponding morphism of tori. Then the torus~$S'(r,F)=\ker(\alpha')^0$ is as claimed.
\end{proof}
 Let~$e(G,F)\in\Z_{\geq1}$ be such that~$\forall g\in \ker(\beta(G,F)), g^{e(G,F)}=0$. Since there are finitely many possibilities for~$G$ and~$F$, 
\begin{multline}
e(r,N):={gcd}
\Bigl(
\Bigl\{e(G,F)|r'\in\{1;\ldots;r\},\\
G\leq S_{r'}\text{ a transitive subgroup},\\
F\leq \{1;\ldots;N\}^{r'}
\Bigr\}
\Bigr)
<+\infty.
\end{multline}
Then Proposition~\ref{prop:F3} follows from Proposition~\ref{prop:F2}.
\end{proof}
\subsection{Application to reciprocity and abelian varieties}
Recall that the morphism
\[
r:Gal(\ol{\Q}/E)\to M^{ab}(\A_f)/M^{ab}(\Q)
\]
is given by the Deligne-Shimura reciprocity law. 
If we denote Artin morphism from class field theory by
\[
\alpha:Gal(\ol{\Q}/E)^{ab}\to T_E(\A_f)/T_E(\Q)
\]
where~$T_E:=Res_{E/\Q}GL(1)$, then~$r$ is given by
\[
Gal(\ol{\Q}/E)\to Gal(\ol{\Q}/E)^{ab}\xrightarrow{\alpha} T_E(\A_f)/T_E(\Q)
\xrightarrow{N}
 M^{ab}(\A_f)/M^{ab}(\Q)
\]
where the last morphism is induced by~$N:T_E\to M^{ab}$.

Let~$A$ be an abelian variety over a field of finite type~$K/\Q$ and let~$C$ be abelian variety with CM. Up to isogeny,~$C$ is a principal CM abelian variety, and has a model over the Hilbert class field extension, say~$E(C)/E_\Phi$ of the reflex field of the CM type~$\Phi$ of~$C$.

Let~$M$ be the Mumford-Tate group of~$A\times C$ and~$x:\mathbb{S}\to M_\R$ be the Hodge cocharacter of~$ A\times C$. The action of~$Gal(\ol{\Q}/K\cdot E(C))$ on~$(A\times C)_{\tors}$ induces a representation
\[
Gal(\ol{\Q}/E(C)\cdot K)\to M(\wh{\Z}).
\]
The map~$Gal(\ol{\Q}/E(C)\cdot K)\to M^{ab}(\wh{\Z})/M^{ab}(\Z)\leq M^{ab}({\A_f})/M(\Q)$ is then given by
\[
Gal(\ol{\Q}/E(C)\cdot K)\to Gal(\ol{\Q}/E(M^{ab},\{x^{ab}\}))\xrightarrow{r} M^{ab}({\A_f})/M^{ab}(\Q).
\]

Recall that Galois group~$Gal(E^{ab}/E(C))$ of the hilbert class field extension~$E(C)/E$ is the image of
\[
\wh{O_{E}}^\times/O_E^{\times}\leq T_E(\A_f)/T_E(\Q)\simeq Gal(E^{ab}/E)^{ab}.
\]
Therefore, the image of~$Gal(\ol{\Q}/E(C))$ in~$M^{ab}({\A_f})/M^{ab}(\Q)$
\begin{itemize}
\item is contained in~$M^{ab}(\wh{\Z})/M^{ab}(\Z)$;
\item and by Theorem~\ref{thm:F1} for~$H=M$, there exists~$e$ such that for
all~$\lambda\in M^{ab}(\wh{\Z})/M^{ab}(\Z)$,
the element~$\lambda^e$ is contained in the image of~$Gal(\ol{\Q}/E(C))$ in $M^{ab}(\wh{\Z})/M^{ab}(\Z)$.
\end{itemize}
Note that~$e=e(GSp(2g),H_g)$ depends only on~$g:=\dim(A\times C)$ is bounded when~$A$ is fixed and the dimension of~$C$ is bounded.
 
Note that there exists~$c:\Z_{\geq1}\to \Z_{\geq1}$ such that the cardinality of~$M^{ab}(\Z)$ is at most~$c(\dim(M^{ab}))$. Moreover~
\[
[K\cdot E(C):E(C)]\text{ divides } 
c_K:=[K\cap \ol{\Q}:\Q]<+\infty.
\]
Therefore the image of
\[
Gal(\ol{\Q}/K\cdot E(C))\to M^{ab}(\wh{\Z})/M^{ab}(\Z)
\] 
contains
\[
\{\lambda^{c_K\cdot e}|\lambda \in M^{ab}(\wh{\Z})/M^{ab}(\Z)\}.
\]

By Lemma~\ref{lem:liftkernel}, the image of
\[
Gal(\ol{\Q}/K\cdot E(C))\to M^{ab}(\wh{\Z})
\]
contains
\[
\{\lambda^{c(\dim(M^{ab})!\cdot c_K\cdot e}|\lambda \in M^{ab}(\wh{\Z})\}.
\]
We have proved the following.
\begin{theorem}\label{thm:Reciprocity principal}
Let~$A$ be an abelian variety over a field of finite type~$K/\Q$ and let~$C$ be principal abelian variety with CM. Let~$\Phi$ be the CM type of~$C$, let~$E_\Phi$ be the reflex field of~$\Phi$ and let~$E(C)/E_\Phi$ be the Hilbert class field extension.
  
There exists~$f:=f(\dim(A\times C),[K\cap \ol{\Q}:\Q])\in\Z_{\geq1}$ such that the image of the morphism
\[
Gal(\ol{K}/K\cdot E(C))\to M(\wh{\Z}) \to M^{ab}(\wh{\Z})
\]
induced by the action on~$(A\times C)_{tors}$ contains
\[
\{\lambda^{f}|\lambda \in M^{ab}(\wh{\Z})\}.
\]
\end{theorem}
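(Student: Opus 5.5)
The statement to prove is Theorem~\ref{thm:Reciprocity principal}. The preceding discussion already contains most of the argument, so the plan is to organise it into three steps.

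\emph{Step 1: the abelian quotient.} First we reduce the action on $(A\times C)_{tors}$ to the composite $Gal(\ol{K}/K\cdot E(C))\to M(\wh{\Z})\to M^{ab}(\wh{\Z})$. Modulo $M^{ab}(\Z)$, this composite factors through the Deligne--Shimura reciprocity map $r=N\circ\alpha$, where $\alpha$ is the Artin map and $N:T_E\to M^{ab}$ is the reflex norm for $E=E(M^{ab},\{x^{ab}\})$. Since $M^{ab}$ receives the CM torus of $C$, and $C$ is principal with a model over the Hilbert class field $E(C)$, class field theory identifies the relevant Galois group with the image of $\wh{O_E}^\times$. Hence the image of $Gal(\ol{\Q}/E(C))$ in $M^{ab}(\A_f)/M^{ab}(\Q)$ is $N(\wh{O_E}^\times)$ modulo $M^{ab}(\Z)$.

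\emph{Step 2: uniform exponent for the norm.} Next we apply Theorem~\ref{thm:F1} with $(G,X)=(GSp(2g),\pm\mathfrak{H}_g)$, $g=\dim(A\times C)$, and $(H,X_H)=(M,M(\R)\cdot x)$. This gives $g'^{e}\in N(\wh{O_E}^\times)$ for all $g'\in M^{ab}(\wh{\Z})$, with $e=e(GSp(2g),\pm\mathfrak{H}_g)$ depending only on $g$. Theorem~\ref{thm:F1} in turn rests on Proposition~\ref{prop:F3}: the character lattice of $M^{ab}$ is generated by characters with entries bounded by some $N(g)$ as in \cite{UY}, Galois orbits have size at most $r(g)$, and so only finitely many configurations occur. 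This step is where the real uniformity lies, and I expect it to be the main obstacle. Care is needed to check that the bound $N$ from \cite[Lem.~2.6]{UY} is independent of the subdatum. One must also check that Proposition~\ref{prop:F3} provides a common exponent; that requires taking an lcm of the $e(G,F)$ rather than a gcd.

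\emph{Step 3: descent to $K\cdot E(C)$ and lifting.} Restricting from $E(C)$ to $K\cdot E(C)$ costs a subgroup of index dividing $c_K=[K\cap\ol{\Q}:\Q]$, since $Gal(\ol{K}/K\cdot E(C))$ maps onto an open subgroup of $Gal(\ol{\Q}/E(C))$ of that index. Thus $c_K\cdot e$-th powers lie in the image modulo $M^{ab}(\Z)$. The group $M^{ab}(\Z)$ is finite, a torsion group in a torus of dimension at most $g$, with order bounded by $c(\dim M^{ab})$. Applying Lemma~\ref{lem:liftkernel} to the projection $M^{ab}(\wh{\Z})\to M^{ab}(\wh{\Z})/M^{ab}(\Z)$, whose kernel is killed by $c(\dim M^{ab})!$, lifts the powers to $M^{ab}(\wh{\Z})$ itself. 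We then set $f=c(\dim M^{ab})!\cdot c_K\cdot e$, which depends only on $\dim(A\times C)$ and $[K\cap\ol{\Q}:\Q]$, as claimed.

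Throughout, some care is also needed with the compatibility of the Galois action on $C_{tors}$ with reciprocity, that is, with the sign and normalisation conventions of Shimura--Taniyama. I would treat these as standard.
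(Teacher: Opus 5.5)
Your proposal is correct and is essentially the paper's own argument. It runs reciprocity plus class field theory for the Hilbert class field $E(C)$, then Theorem~\ref{thm:F1} for $(M,M(\R)\cdot x)$ inside $(GSp(2g),\pm\mathfrak{H}_g)$ to get an exponent $e$ depending only on $g$, then restriction to $K\cdot E(C)$ and lifting through the finite group $M^{ab}(\Z)$ by Lemma~\ref{lem:liftkernel}, giving the same $f=c(\dim M^{ab})!\cdot c_K\cdot e$. Your remark that Proposition~\ref{prop:F3} needs an lcm rather than the stated gcd is a correct repair.

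Two imprecisions you share with the paper are easily fixed:
\begin{itemize}
\item $E(C)$ is the Hilbert class field of $E_\Phi$, not of $E=E(M^{ab},\{x^{ab}\})$. So one should use $Gal(\ol{\Q}/E\cdot E(C))$, and its image in $Gal(E^{ab}/E)$ contains (rather than equals) that of $\wh{O_E}^\times$: a unit idele of $E$ acts on $E_\Phi^{ab}$ through its norm to $E_\Phi$, which is a unit idele, hence it acts trivially on $E(C)$.
\item The index $[K\cdot E(C):E(C)]$ is at most $c_K$ but need not divide it. These bounded index losses, together with $[E:\Q]$ (bounded in terms of $g$), are absorbed by replacing $c_K$ with a suitable factorial.
\end{itemize}
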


\end{document}